\def\wbox#1;#2;{\vbox{\hrule\hbox{\vrule height#1mm\kern#2mm\vrule
  height#1mm}\hrule}}
\newcommand\RE{\mathbb{R}}
\newcommand\Huo{H^1_0(\Omega)}
\newcommand\Ld{L^2(\Omega)}
\newcommand\Vh{V_h^{nc}}
\newcommand\Vc{V_h^c}
\newcommand\Vt{\tilde V}
\newcommand\gh{\nabla_h}
\renewcommand\P[2]{P_{#1,\dots,#2}}
\newcommand\Pch{P^c_h}
\newcommand\Thc{T^c_h}
\newcommand\Pc[2]{P^c_{#1,\dots,#2,h}}
\newcommand\Ph[2]{P_{#1,\dots,#2,h}}
\newcommand\E[2]{E_{#1,\dots,#2}}
\newcommand\Eh[2]{E_{#1,\dots,#2,h}}
\newcommand\Ech[2]{E^c_{#1,\dots,#2,h}}
\renewcommand\L{\mathcal{L}}
\newcommand\lch{\lambda^c_h}
\newcommand\uch{u^c_h}
\newcommand\lc{\lambda^c}
\newcommand\uc{u^c}
\newcommand\rayl{\mathcal{R}}
\newcommand\raylh{\mathcal{R}_h}
\newcommand\im{\mathrm{Im}}
\newcommand\spann{\mathop{\rm span}\nolimits}
\newcommand\nh{\nabla_h}
\newcommand\tv{\tilde v}
\let\NEW\relax
\theoremstyle{plain}
\newtheorem{thm}{Theorem}
\newtheorem{proposition}[thm]{Proposition}
\newtheorem{lemma}[thm]{Lemma}
\theoremstyle{remark}
\begin{document}
\title[]
{A posteriori error analysis for nonconforming approximation of multiple
eigenvalues}

\dedicatory{Dedicated to Prof.\ Martin Costabel on the occasion of his 65th
anniversary}

\author{Daniele Boffi}
\address{Dipartimento di Matematica ``F. Casorati'', Universit\`a di Pavia,
Italy}
\email{daniele.boffi@unipv.it}
\urladdr{http://www-dimat.unipv.it/boffi/}
\author{Ricardo G. Dur\'an}
\address{Departamento de Matem\'atica, Facultad de Ciencias Exactas y
Naturales, Universidad de Buenos Aires and IMAS, CONICET, 1428 Buenos Aires,
Argentina}
\email{rduran@dm.uba.ar}
\urladdr{http://mate.dm.uba.ar/~rduran/}
\author{Francesca Gardini}
\address{Dipartimento di Matematica ``F. Casorati'', Universit\`a di Pavia,
Italy}
\email{francesca.gardini@unipv.it}
\urladdr{http://www-dimat.unipv.it/gardini/}
\author{Lucia Gastaldi}
\address{DICATAM Sez. di Matematica, Universit\`a di Brescia, Italy}
\email{lucia.gastaldi@unibs.it}
\urladdr{http://www.ing.unibs.it/gastaldi/}

\begin{abstract}
In this paper we study an a posteriori error indicator introduced in E.~Dari,
R.G.~Dur\'an, C.~Padra, \emph{Appl.\ Numer.\ Math.}, 2012, for the
approximation of \NEW{the} Laplace eigenvalue problem with Crouzeix--Raviart
non-conforming finite elements. In particular, we show that the estimator is
robust also in presence of eigenvalues of multiplicity greater than one. Some
numerical examples confirm the theory and \NEW{illustrate} the convergence of an
adaptive algorithm when dealing with multiple eigenvalues.
\end{abstract}
\maketitle
\section{Introduction}

Although the a posteriori error analysis for eigenvalue problems arising from
partial differential equations is a mature field of research, some intriguing
questions remain open when discussing the convergence of an adaptive scheme
for the approximation of eigenvalues with multiplicity greater than one.

In this paper we consider the approximation of Laplace eigenvalue by standard
Crouzeix--Raviart finite elements (see~\cite{CR} and, for
instance,~\cite{acta}).
In~\cite{DDP} an a posteriori error indicator has been proposed for this
problem and its efficiency and reliability have been proved. The analysis
of~\cite{DDP} showed that the indicator is equivalent to the energy norm of
the error in the eigenfunctions (up to higher order terms) and that it
provides an upper bound for the error in the \emph{first} eigenvalue (up to higher order
terms). In this paper we are mainly interested in the case when an eigenvalue
may have multiplicity greater than one. This topic has been the object of
little research and only very recently people started investigating the issues
originating from the presence of multiple eigenvalues (\NEW{see, in
particular,~\cite{ovall1,SoGi,ovall2,zhou,Gallistl}}).

The presented results contain a theoretical part, included in Sections~3
and~4, and some numerical experiments reported in Section~5.

In Section~3 we study the error estimates for the eigenfunctions and,
recalling the results of~\cite{DDP}, we show that the results extend in a
natural way to the case of multiple eigenvalues.
In Section~4, using some special tools adapted from~\cite{knyazev}, we extend
the estimates for the eigenvalues to the general case of multiplicity $q\ge1$.
One of the main difficulties comes from the fact that, when using
non-conforming finite elements, one cannot deduce from the min-max lemma that
the discrete eigenvalues should be upper bounds of the
corresponding continuous ones. In our analysis we study separately the cases
when an eigenvalue is approximated by $q$ discrete eigenvalues \NEW{from}
above or \NEW{from}
below. Our analysis does not apply to the case when a continuous eigenvalue
corresponds to discrete eigenvalues which can approximate it simultaneously
from above \NEW{or} from below. It should however be noted that in most situations
Crouzeix-Raviart element provides lower bound: this has been proved
asymptotically for
singular eigenspaces (see~\cite{AD} and~\cite{DDP}).
\NEW{See also~\cite{cg} where
this property has been used for the construction of guaranteed lower bounds
for eigenvalue approximation.}
Known examples of
discrete eigenvalues which provide approximation from above are rare and
computed on very coarse meshes.

The numerical results shown in Section~5 confirm the theory and aim at
investigating the behavior of an adaptive procedure based on the studied
indicator in case of multiple eigenvalues. As expected, it turns out that a
correct procedure should take into account all discrete eigenfunctions
approximating the same eigenspace (see~\cite{SoGi}). One of the main issues
raised by this investigation is that in general it is not known a priori
(besides very particular situations like the one considered in our tests) the
multiplicity of an eigenvalue of the continuous problem and it is not obvious
to detect which discrete values correspond to it. This phenomenon requires
further investigation and will be the object of future study.

\section{Setting of the problem}
Let $\Omega\subset \RE^d$, $d=2,3$ be a polygonal or polyhedral Lipschitz
domain, we consider the Laplacian eigenproblem:
find $\lambda\in\RE$ and $u\in\Huo$ with $u\ne0$ such that
\begin{equation}
\label{eq:var}
a(u,v)=\lambda(u,v)\quad\forall v\in\Huo,
\end{equation}
where
\[
a(u,v)=\int_\Omega \nabla u\nabla v\,dx\qquad (u,v)=\int_\Omega uv\,dx.
\]
It is well known that the eigenvalues of the problem above form an increasing
sequence tending to infinity:
\begin{equation}
\label{eq:lambda}
0<\lambda_1\le\lambda_2\le\dots\le\lambda_i\le\cdots
\end{equation}
We denote by $u_i$ an eigenfunction associated to the eigenvalue 
$\lambda_i$; it is well known that the eigenfunctions can be chosen such that
the following properties are satisfied:
\begin{equation}
\label{eq:orto}
\aligned
&(u_i,u_i)=1\qquad&&(u_i,u_j)=0\quad\text{if }i\ne j\\
&a(u_i,u_i)=\lambda_i\qquad&&a(u_i,u_j)=0\quad\text{if }i\ne j.
\endaligned
\end{equation}
Let us introduce the Crouzeix--Raviart non conforming finite element space we
shall work with (see~\cite{CR}).
We consider a regular family of decompositions of $\Omega$
into closed triangles or tetrahedra. Let $h_K$ denote the diameter of the
element $K$ and $h=\max_{K\in\mathcal{T}}h_K$. The set of all faces $F$ of
elements in $\mathcal{T}_h$ is denoted by $\mathcal{F}_h$. 
For any internal face $F$ let $K$ and $K'$ be two elements such that $K\cap K'=F$,
we denote by $[v]_F$ the jump across $F$ for $v\in L^2(K\cup K')$. For a
face $F\subset\partial\Omega$ we set $[v]_F=v$. Then we define
\[
\Vh=\{v\in\Ld:\ v|_{K}\in\mathcal{P}_1(K)\ \forall K\in\mathcal{T}_h 
\text{ and }\int_F[v]_F=0\ \forall F\in\mathcal{F}_h\}.
\]
We introduce the following discrete bilinear form defined on $\Vh\times\Vh$
\[
a_h(u,v)=\sum_{K\in\mathcal{T}_h}\NEW{\int_K}\nabla u\nabla v\,dx
=\int_\Omega \gh u \gh v\,dx \qquad\forall u,v\in\Vh
\]
where
\[
\gh u|_{\NEW{K}}=\nabla(u|_{\NEW{K}}).
\]
Let us recall some standard notation. We set $\|\cdot\|^2_0=(\cdot,\cdot)$,
the $L^2$-norm, and
\begin{equation}
\label{eq:norms}
\aligned
&\|u\|_1^2=a(u,u)=\|\nabla u\|^2_0&&\forall u\in\Huo\\
&\|u\|^2_h=a_h(u,u)=\|\nabla_h u\|^2_0&&\forall u\in\Vh
\endaligned
\end{equation}
Notice that thanks to the Poincar\'e inequality and to its discrete version for
non conforming elements (see~\cite{GastaldiNochetto})
both $\|\cdot\|_1$ and $\|\cdot\|_h$ are norms on $\Huo$ and $\Vh$, respectively.

Let $\Vt=\Huo+\Vh$, that is any element $\tilde u$ of $\Vt$ can be written
as the sum $\tilde u=u+u_h$ with $u\in\Huo$ and $u_h\in\Vh$.
We have that $\|\cdot\|_h$ is a norm in $\Vt$ and that in the case of $u\in\Huo$
it holds $\|u\|_h=\|u\|_1$.

Then the discrete eigenproblem reads:
find $\lambda_h\in\RE$ and $u_h\in \Vh$ with $u_h\ne0$ such that
\begin{equation}
\label{eq:pbnc}
a_h(u_h,v)=\lambda_h(u_h,v)\qquad\forall v\in \Vh.
\end{equation}
Problem~\eqref{eq:pbnc} admits exactly $N_h=\dim(\Vh)$ positive eigenvalues
with
\begin{equation}
\label{eq:lambdah}
0<\lambda_{1,h}\le\lambda_{2,h}\le\dots\le\lambda_{N_h,h}.
\end{equation}
Moreover, we denote by $u_{i,h}$ a discrete eigenfunction associated to the
eigenvalue  $\lambda_{i,h}$ with the following properties:
\begin{equation}
\label{eq:ortoh}
\aligned
&(u_{i,h},u_{i,h})=1\qquad&& (u_{i,h},u_{j,h})=0\quad\text{if }i\ne j\\
&a_h(u_{i,h},u_{i,h})=\lambda_{i,h}
\qquad && a_h(u_{i,h},u_{j,h})=0\quad\text{if }i\ne j.
\endaligned
\end{equation}
We indicate with $\E{i}{j}\subset\Huo$ (resp. $\Eh{i}{j}\subset\Vh$) the span
of the eigenvectors $\{u_i,\dots,u_j\}$ (resp. $\{u_{i,h},\dots,u_{j,h}\}$)
and $\P{i}{j}$ (resp. $\Ph{i}{j}$) the elliptic projection onto $\E{i}{j}$
(resp. $\Eh{i}{j}$), that is
\begin{equation}
\label{eq:proj}
\aligned
&\text{for }u\in\Huo,\ \P{i}{j}u\in\E{i}{j}\ \text{ s.t. }
a(u-\P{i}{j}u,v)=0&&\forall v\in\E{i}{j}\\
&\text{for }u\in\Vt,\ \Ph{i}{j}u\in\Eh{i}{j}\ \text{ s.t. }
a_h(u-\Ph{i}{j}u,v)=0&&\forall v\in\Eh{i}{j}.
\endaligned
\end{equation}
\NEW{The discrete solution operator $T_h:L^2(\Omega)\to L^2(\Omega)$ is
defined as $T_hf\in\Vh$ with
\begin{equation}
a_h(T_hf,v)=(f,v)\quad\forall v\in\Vh.
\label{eq:soluz}
\end{equation}
}%
In our a posteriori error analysis we shall also make use of the
space of conforming piecewise linear elements
\[
\Vc=\{v\in\Huo: v|_{K}\in\mathcal{P}_1(K)\ \forall K\in\mathcal{T}_h\}.
\]
The conforming discretization of the eigenvalue problem under consideration
reads: find $\lch\in\RE$ and $\uch\in\Vc$ with $\uch\ne0$
such that
\begin{equation}
\label{eq:pbconf}
a(\uch,v)=\lch(\uch,v)\qquad v\in\Vc.
\end{equation}
Problem~\eqref{eq:pbconf} admits $N^c_h=\dim(\Vc)$ positive eigenvalues
\begin{equation}
\label{eq:lambdac}
0<\lc_{1,h}\le\lc_{2,h}\le\dots\le\lc_{N^c_h,h}.
\end{equation}
As in the case of non conforming discretization we denote by $\uc_{i,h}$ the
eigenfunction associated to the eigenvalue $\lc_{i,h}$ such that
$(\uc_{i,h},\uc_{i,h})=1$ with the following orthogonality properties:
\begin{equation}
\label{eq:ortoch}
\aligned
&(\uc_{i,h},\uc_{i,h})=1\qquad&& (\uc_{i,h},\uc_{j,h})=0\quad\text{if }i\ne j\\
&a_h(\uc_{i,h},\uc_{i,h})=\lc_{i,h}
\qquad && a_h(\uc_{i,h},\uc_{j,h})=0\quad\text{if }i\ne j.
\endaligned
\end{equation}
Notice that $\Vc=\Huo\cap\Vh$, hence $N^c_h<N_h$ and
$\lambda_{i.h}\le\lc_{i,h}$ for $i=1,\dots,N^c_h$ because of the min max
characterization.

Let $\Pch$ be the elliptic projection from $\Vt$ onto $\Vc$, that is:
for all $u\in\Vt$, $\Pch u\in\Vc$ such that
\begin{equation}
\label{eq:defPch}
a(\Pch u,v)=a_h(u,v)\quad \forall v\in\Vc.
\end{equation}
Similarly to the nonconforming approximation, we denote by
$\Ech{i}{j}\subset\Vc$ 
the span of the eigenvectors $\{\uc_{i,h},\dots,\uc_{j,h}\}$ 
and by $\Pc{i}{j}$ the elliptic projection onto $\Ech{i}{j}$, that is:
for all $u\in\Vt$, $\Pc{i}{j} u\in\Ech{i}{j}$ such that
\begin{equation}
\label{eq:projc}
a_h(u-\Pc{i}{j}u,v)=0\qquad\forall v\in\Ech{i}{j}.
\end{equation}
We shall make use of the Rayleigh quotient associated to the eigenvalue
problem~\eqref{eq:var}
\begin{equation}
\rayl(w)=\frac{a(w,w)}{(w,w)}\quad\forall w\in\Huo\setminus\{0\}
\label{eq:rayl}
\end{equation}
and of the analogous quotient associated to the nonconforming discretization
\begin{equation}
\raylh(w)=\frac{a_h(w,w)}{(w,w)}\quad\forall w\in\Vh\setminus\{0\}
\label{eq:raylh}
\end{equation}
In case of multiple eigenvalues we shall need to estimate the distance between
eigenspaces associated to them and to their discrete counterpart.
Let $E$ and $F$ be two subspaces of $\Vt$, then the distance between them
is defined as 
\[
\delta_h(E,F)=\sup_{\genfrac{}{}{0pt}{}{u\in E}{\|u\|_h=1}} 
\inf_{v\in F}\|u-v\|_h.
\]
For nonzero functions $u$ and $v$, if $E=\spann\{u\}$, we
write $\NEW{\delta_h}(u,F)$ instead of $\NEW{\delta_h}(E,F)$ and if $E=\spann\{u\}$ and
$F=\spann\{v\}$, we write $\NEW{\delta_h}(u,v)$ for $\NEW{\delta_h}(E,F)$.
We have $0\le\delta_h(E,F)\le1$ and $\delta_h(E,F)=0$ if and only if
$E\subseteq F$. If $\dim E=\dim F<\infty$ then $\delta_h(E,F)=\delta_h(F,E)$.
If $P$ and $Q$ are the orthogonal projections onto $E$ and $F$, respectively,
then $\delta_h(E,F)$ equals the largest singular value of the operator $(I-Q)P$
and
\begin{equation}
\label{eq:LGproj}
\delta_h(E,F)=\|(I-Q)P\|_{\L(\Vt)},
\end{equation}
\NEW{where the notation $\|\cdot\|_{\L(\Vt)}$, as usual, denotes the operator
norm from $\Vt$ into itself.}
See for example~\cite{knyazevosborn} for these results and the characterization
of the distance between subspaces.
\section{Error estimates for the eigenfunctions}
In this section we introduce the error indicators and present the a posteriori
error estimates for the eigenfunctions.

First of all let us recall some properties of the Couzeix--Raviart space.
Given $w\in\Huo$, we denote by $w_I\in\Vh$ its edge/face average interpolant
such that
\begin{equation}
\label{eq:Vncinterp}
\int_F w_I=\int_F w\qquad \forall F\in\mathcal{F}_h. 
\end{equation}
It is well known that $\nh w_I$ is the $L^2$-projection of $\nabla w$ onto the
piecewise constant vector fields and that the following estimates hold true
\begin{equation}
\label{eq:PropI}
\aligned
&\|\nh w_I\|_0\le\|\nabla w\|_0,\\
&\|w-w_I\|_{L^2(K)}\le C_1 h_K\|\nabla w\|_{L^2(K)}.
\endaligned
\end{equation}
Our error indicators make use of the
following conforming postprocessing for the elements in $\Vh$.
To any element $v\in\Vh$ we associate an element $\tv\in\Vc$ obtained by
averaging the value of $v$ at the vertices of the triangulation
$\mathcal{T}_h$. Namely, following~\cite{DDP}, 
for each internal vertex $\NEW{\mathsf{P}}$ we consider all elements
$K_i\in\mathcal{T}_h$ for $i=1,\dots,M$ which share the vertex $P$ and define
\begin{equation}
\label{eq:postproc}
\tv(\NEW{\mathsf{P}})=\sum_{i=1}^Mw_i v|_{K_i}(\NEW{\mathsf{P}}),
\end{equation}
where $w_i$ are suitable weights such that $\sum_{i=1}^Mw_i=1$. 
\begin{lemma}
\label{le:stimetildev}
The following estimates hold true with constants $C$ independent of $h$
\begin{equation}
\label{eq:stimetildev}
\aligned
&\|\tv\|_0\le C\|v\|_0\\
&\|\nabla\tv\|_0\le C\|\nabla_h v\|_0\\
&\|\tv-v\|_0\le Ch\|\nh(\tv-v)\|_0.
\endaligned
\end{equation}
\end{lemma}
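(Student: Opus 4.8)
The plan is to prove the three estimates by exploiting the local nature of the averaging operator~\eqref{eq:postproc} together with scaling arguments on a reference element, in the spirit of Cl\'ement-type interpolation bounds. First I would fix an element $K\in\mathcal{T}_h$ and express $\tv|_K$ in terms of the nodal values $\tv(\mathsf{P})$ at the vertices $\mathsf{P}$ of $K$. Since $v$ is piecewise linear and $\tv|_K\in\mathcal{P}_1(K)$, on the reference element $\hat K$ the quantity $\|\hat{\tv}\|_{L^2(\hat K)}$ is equivalent to the Euclidean norm of the vector of nodal values, and each nodal value $\tv(\mathsf{P})$ is a convex combination $\sum_i w_i v|_{K_i}(\mathsf{P})$ of the values that the (possibly discontinuous) function $v$ takes at $\mathsf{P}$ from the surrounding elements $K_i$. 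Using $\sum_i w_i=1$ and the Cauchy--Schwarz inequality, $|\tv(\mathsf{P})|^2\le\sum_i w_i\,|v|_{K_i}(\mathsf{P})|^2\le\max_i |v|_{K_i}(\mathsf{P})|^2$, so each nodal value of $\tv$ is controlled by the nodal values of $v$ on a fixed-size patch $\omega_K:=\bigcup\{K_i: \bar K_i\cap\bar K\ne\emptyset\}$. Scaling back and summing over $K$, and using that the number of overlaps is bounded by the shape-regularity of the mesh, gives $\|\tv\|_0^2\le C\sum_K \|v\|_{L^2(\omega_K)}^2\le C\|v\|_0^2$, which is the first estimate.

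For the gradient estimate the same strategy applies, but one must first remove the mean value: on a patch $\omega_K$ the operator $v\mapsto\tv$ reproduces constants (because the weights sum to one, a function equal to a constant $c$ on all of $\omega_K$ is averaged to $c$), hence for any constant $c$ one has $\nabla\tv|_K=\nabla(\widetilde{v-c})|_K$. Choosing $c$ to be the mean of $v$ over $\omega_K$ and applying an inverse inequality $\|\nabla\tv\|_{L^2(K)}\le C h_K^{-1}\|\tv-c\|_{L^2(K)}$ together with the first estimate localized to the patch, $\|\widetilde{v-c}\|_{L^2(K)}\le C\|v-c\|_{L^2(\omega_K)}$, and the (broken) Poincar\'e inequality $\|v-c\|_{L^2(\omega_K)}\le C h_K\|\nh v\|_{L^2(\omega_K)}$ (valid on a connected patch of shape-regular elements, cf.~\cite{GastaldiNochetto}), one obtains $\|\nabla\tv\|_{L^2(K)}\le C\|\nh v\|_{L^2(\omega_K)}$; squaring and summing yields the second bound.

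The third estimate is the most delicate because it must be sharp in $h$: one cannot simply write $\|\tv-v\|_0\le\|\tv\|_0+\|v\|_0$. The key point is that $\tv-v$ is a piecewise linear function whose jumps across interior faces are entirely due to the nodal corrections, and on each element $\tv-v$ reproduces no nonzero constant only after the averaging has ``used up'' the continuity; more precisely, on $K$ the difference $(\tv-v)|_K$ vanishes to first order in the sense that its $L^2$ norm is comparable to $h_K$ times the norm of its gradient. To make this rigorous I would again pass to the reference element: $\hat w:=\widehat{\tv-v}|_{\hat K}$ is an affine function, so $\|\hat w\|_{L^2(\hat K)}$ is equivalent to $|\hat w(\hat{\mathsf{P}}_0)|+\|\nabla\hat w\|_{L^2(\hat K)}$ for any fixed vertex $\hat{\mathsf{P}}_0$; the obstacle is to control the single nodal value $(\tv-v)(\mathsf{P}_0)=\sum_i w_i\big(v|_{K_i}(\mathsf{P}_0)-v|_K(\mathsf{P}_0)\big)$ by $h_K\|\nh(\tv-v)\|$. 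Each difference $v|_{K_i}(\mathsf{P}_0)-v|_K(\mathsf{P}_0)$ telescopes along a chain of elements in the patch connecting $K_i$ to $K$, and across each shared face $F$ the Crouzeix--Raviart condition $\int_F[v]_F=0$ forces the jump of the affine function $[v]_F$ to have zero mean, so its pointwise values are bounded by its gradient times $h$; the same telescoping, run with $\tv-v$ in place of $v$ (noting $\tv$ is continuous so contributes nothing to face jumps), shows the nodal value is bounded by $C h_K\|\nh(\tv-v)\|_{L^2(\omega_K)}$. Plugging this back, scaling to $K$, and summing over the mesh gives the third estimate. I expect this last telescoping/jump argument — getting the correct power of $h$ without losing it to the reference-element scaling — to be the main obstacle; everything else is routine scaling and shape-regularity bookkeeping.
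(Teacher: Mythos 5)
Your argument is essentially correct, but it takes a more self-contained route than the paper. The paper's proof leans entirely on the nodal estimate of~\cite[Th.~5.2]{DDP}, namely $|\tv(\mathsf{P})-v|_{K}(\mathsf{P})|\le C h_K^{1-d/2}\|\nh(v-w)\|_{L^2(\Omega_{\mathsf{P}})}$ for every $w\in\Huo$: the first two bounds follow by taking $w=0$ (plus an inverse estimate for the $L^2$ case) and expanding $\tv-v$ in the nodal basis, and the third follows by the single clever choice $w=\tv$, which immediately produces the factor $h$ after multiplying by $\|N_i\|_0\le Ch_K^{d/2}$. Your proof re-derives the substance of that cited estimate from scratch: the patchwise scaling for the $L^2$ bound, the constant-reproduction plus broken Poincar\'e argument for the gradient bound, and the telescoping of vertex jumps across faces using $\int_F[v]_F=0$ for the third bound. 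Your telescoping step is in fact the mechanism behind the DDP estimate (in the case $w=\tv$, where $[\tv-v]_F=-[v]_F$ has zero face mean), so the two proofs are morally the same; yours buys independence from the external reference at the cost of more bookkeeping.

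Two loose ends you should tighten. First, your step $|\tv(\mathsf{P})|^2\le\sum_i w_i|v|_{K_i}(\mathsf{P})|^2$ is Jensen's inequality and needs $w_i\ge0$; the paper only assumes $\sum_i w_i=1$, so you should instead assume (as is implicit, and harmless) that the weights are uniformly bounded and write $|\tv(\mathsf{P})|\le C\max_i|v|_{K_i}(\mathsf{P})|$. Second, the averaging~\eqref{eq:postproc} is defined only at \emph{internal} vertices; at boundary vertices $\tv$ must be set to zero for $\tv$ to lie in $\Vc\subset\Huo$. There your constant-reproduction argument for the gradient bound breaks down (one cannot subtract an arbitrary constant), and the correct fix is to use the condition $\int_F v=0$ on boundary faces to get a Poincar\'e--Friedrichs bound on $v$ itself over boundary patches; similarly the telescoping chain for the third estimate must terminate at a boundary face, where again the zero-mean condition controls the vertex value. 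These are standard repairs, but without them the argument as written only covers interior elements.
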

\begin{proof}
Let $\NEW{\mathsf{P}}$ be a vertex of the mesh. It is proved in~\cite[Th.~5.2]{DDP}, that
for all $w\in\Huo$
\begin{equation}
\label{eq:tildev1}
|\tv(\NEW{\mathsf{P}})-v|_{K}(\NEW{\mathsf{P}})| \le \frac{C}{h_K^{d/2-1}}
\|\nh(v-w)\|_{L^2(\Omega_\NEW{\mathsf{P}})},
\end{equation}
where $\Omega_\NEW{\mathsf{P}}$ is the union of elements $K$ containing
$\NEW{\mathsf{P}}$.

For $w=0$, using an inverse estimate, we also have
\[
|\tv(\NEW{\mathsf{P}})-v|_K(\NEW{\mathsf{P}})| \le \frac{C}{h_K^{d/2}}
\|v\|_{L^2(\Omega_\NEW{\mathsf{P}})},
\]
Then, for an element $K$ we write, using the standard notation $N_i$ for nodal
basis functions,
\[
\tv-v=\sum_{i=1}^{d+1} \left(\tv(\NEW{\mathsf{P}}_i)-v|_K(\NEW{\mathsf{P}}_i)\right)  N_i
\]
and
\[
\nabla(\tv-v)=\sum_{i=1}^{d+1}\left(\tv(\NEW{\mathsf{P}}_i)-v|_K(\NEW{\mathsf{P}}_i)\right) \nabla N_i
\]
and therefore, if $\tilde K$ is the union of neighbors of $K$,
using the above estimates and standard estimates for the basis functions $N_i$,
we obtain
\[
\|\tv-v\|_{0,K}\le C \|v\|_{0,\tilde K}
\]
and
\[
\|\nabla(\tv-v)\|_{0,K}\le C \|\nabla_h v\|_{0,\tilde K}.
\]
Then the triangle inequality yields the first two estimates
in~\eqref{eq:stimetildev}. 
The last one can also be easily obtained from~\eqref{eq:tildev1} taking into
account that $\|N_i\|_0\le C h_K^{d/2}$ and choosing $w=\tv$.

\end{proof}

We define the local and global error estimators as follows
\begin{equation}
\label{eq:stimatore}
\aligned
&\mu^2_{i,\NEW{K}}=\|\nabla \tilde u_{i,h}-\nabla_h u_{i,h}\|^2_{L^2(\NEW{K})},\quad
&&\mu^2_i=\sum_\NEW{K} \mu^2_{i,\NEW{K}}\\
&\eta^2_{i,\NEW{K}}=h_\NEW{K}^2\|\lambda_{i,h}u_{i,h}\|_{L^2(\NEW{K})}^2,
&&\eta^2_i=\sum_\NEW{K} \eta^2_{i,\NEW{K}}.
\endaligned
\end{equation}
The following theorem gives the error estimates for the eigenfunctions
in term of the above error indicators.
\begin{thm}
\label{th:errfunct}
Let $\lambda_i$ be an eigenvalue of~\eqref{eq:var} with multiplicity $q\ge1$
(that is $\lambda_i=\dots=\lambda_{i+q-1}$)
and let $\E{i}{i+q-1}$ be the associated eigenspace. Assume that
$\lambda_{j,h}$ is a discrete eigenvalue of~\eqref{eq:pbnc}
converging to $\lambda_i$ and that $E_{j,h}$ is the associated eigenspace
($j=i,\dots,i+q-1$).
Then 
\begin{equation}
\NEW{\delta_h}(E_{j,h},\E{i}{i+q-1})\le\NEW{\delta_h}(E_{j,h},\Huo)+
C_1\frac{\eta_j}{\lambda_{j,h}}+h.o.t.
\label{eq:errfunct}
\end{equation}
More precisely, we have
\[
|h.o.t.|\le\frac{C_{\Omega}}{\lambda_{j,h}}
\left((\lambda_i-\lambda_{j,h})+(\lambda_i\lambda_{j,h})^{1/2}
\inf_{v\in\E{i}{i+q-1}}\|v-u_{j,h}\|_{\Ld}\right),
\]
where $C_1$ is the constant in~\eqref{eq:PropI} and $C_\Omega$ is the
Poincar\'e constant. 
\end{thm}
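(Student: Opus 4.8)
The plan is to reduce the estimate to a standard a posteriori/duality argument applied to a single discrete eigenfunction. Fix $j\in\{i,\dots,i+q-1\}$ and pick a unit (in $\|\cdot\|_h$) eigenfunction $u_{j,h}\in E_{j,h}$; by the definition of $\delta_h$ and since $\dim E_{j,h}=1$ generically, it suffices to bound $\inf_{v\in\E{i}{i+q-1}}\|u_{j,h}-v\|_h$ by the right-hand side of~\eqref{eq:errfunct}. First I would introduce the conforming postprocessing $\tilde u_{j,h}\in\Vc\subset\Huo$ from~\eqref{eq:postproc} and insert it: by the triangle inequality in $\|\cdot\|_h$,
\[
\inf_{v\in\E{i}{i+q-1}}\|u_{j,h}-v\|_h
\le \|u_{j,h}-\tilde u_{j,h}\|_h
+\inf_{v\in\E{i}{i+q-1}}\|\tilde u_{j,h}-v\|_h .
\]
The first term is exactly $\mu_j$ (up to the relation $\|\nh(\cdot)\|_0=\|\cdot\|_h$), but more to the point it is controlled by $\delta_h(E_{j,h},\Huo)$ up to higher-order terms, since $\tilde u_{j,h}$ is a particular conforming function close to $u_{j,h}$; alternatively one keeps $\delta_h(E_{j,h},\Huo)$ as the leading term on the right and only needs that $\|u_{j,h}-\tilde u_{j,h}\|_h$ is comparable to it, which follows from Lemma~\ref{le:stimetildev} together with the fact that the best conforming approximation realizes $\delta_h(E_{j,h},\Huo)$.

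The main work is then to estimate $\inf_{v\in\E{i}{i+q-1}}\|\tilde u_{j,h}-v\|_h$ for the \emph{conforming} function $\tilde u_{j,h}\in\Huo$. Here I would use the elliptic projection $v^\ast=\P{i}{i+q-1}\tilde u_{j,h}$ and estimate $\|\tilde u_{j,h}-v^\ast\|_1$ by a duality (Aubin--Nitsche / Knyazev-type) argument: write $\tilde u_{j,h}-v^\ast$ in terms of the continuous solution operator and the eigenfunction expansion, exploiting that $a(\tilde u_{j,h}-v^\ast,u_k)=a(\tilde u_{j,h},u_k)$ for $k\notin\{i,\dots,i+q-1\}$, and that $u_{j,h}$ nearly satisfies the continuous variational equation with datum $\lambda_{j,h}u_{j,h}$. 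Concretely, one tests~\eqref{eq:var} against $\tilde u_{j,h}$, uses the nonconforming eigenvalue equation~\eqref{eq:pbnc} for $u_{j,h}$, and controls the consistency error $a_h(u_{j,h},w)-\lambda_{j,h}(u_{j,h},w)$ for $w\in\Huo$; the standard Crouzeix--Raviart consistency estimate, using~\eqref{eq:Vncinterp}--\eqref{eq:PropI} and integration by parts element-by-element, gives a bound of the form $C_1 h_K\|\lambda_{j,h}u_{j,h}\|_{L^2(K)}$ per element, i.e.\ precisely $C_1\eta_j/\lambda_{j,h}$ after normalizing by $\|u_{j,h}\|_h^2=\lambda_{j,h}$. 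The remaining contributions — coming from the spectral gap, from $\lambda_i-\lambda_{j,h}$, and from the $L^2$ distance $\inf_{v\in\E{i}{i+q-1}}\|v-u_{j,h}\|_{\Ld}$ — are collected into $h.o.t.$, with the stated constants $C_\Omega$ (Poincar\'e) appearing through the bound $\|w\|_{\Ld}\le C_\Omega\|\nabla w\|_0$ used in the duality step and in converting $L^2$-type quantities to the $\|\cdot\|_h$ scale.

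I expect the main obstacle to be the bookkeeping around the case $q\ge 1$: because the discrete eigenvalues $\lambda_{j,h}$ need not lie on the same side of $\lambda_i$ (and, for nonconforming elements, need not be upper bounds at all), one cannot simply invoke min--max to sign $\lambda_i-\lambda_{j,h}$, so this term must be carried as a signed higher-order quantity and the gap estimates separating $\E{i}{i+q-1}$ from the rest of the spectrum must be uniform in $j$. The second delicate point is ensuring that the factor multiplying $\eta_j$ is exactly $C_1/\lambda_{j,h}$ with $C_1$ the interpolation constant from~\eqref{eq:PropI} and nothing worse — this forces one to route the consistency argument through the averaged interpolant $w_I$ of a suitable $w\in\Huo$ rather than through a generic bound, so that the only length-scale constant that survives is the one in~\eqref{eq:PropI}. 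Everything else (Lemma~\ref{le:stimetildev}, the properties~\eqref{eq:orto}, \eqref{eq:proj} of the projections, and the characterization~\eqref{eq:LGproj} of $\delta_h$) is used as a black box.
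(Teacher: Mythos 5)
Your proposal diverges from the paper's proof at the very first step, and the divergence creates a genuine gap. The paper's argument never inserts the postprocessed function $\tilde u_{j,h}$ at this stage: it takes $u_j(h)\in\E{i}{i+q-1}$ to be the $\|\cdot\|_h$-best approximation of $u_{j,h}$ from the \emph{whole} eigenspace, observes that $u_j(h)$ is still an exact eigenfunction of~\eqref{eq:var} for $\lambda_i$ (this one-line observation is the entire content of the extension to multiplicity $q>1$), and then reruns the argument of~\cite[Th.~3.2]{DDP} verbatim. That argument produces $\inf_{v\in\Huo}\|v-u_{j,h}\|_h$ directly as the leading term (the conforming part of the error splitting), $C_1\eta_j$ from the Crouzeix--Raviart consistency error routed through the interpolant $w_I$, and the $C_\Omega$-terms from the $L^2$ data perturbation; normalizing by $\|u_{j,h}\|_h$ turns the leading term into exactly $\delta_h(E_{j,h},\Huo)$ with constant one. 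The postprocessing $\tilde u_{j,h}$ only enters afterwards, in Lemma~\ref{le:gapHuo}, to bound $\delta_h(E_{j,h},\Huo)$ by $\mu_j/\lambda_{j,h}$.

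Your triangle inequality through $\tilde u_{j,h}$ instead produces $\|u_{j,h}-\tilde u_{j,h}\|_h=\mu_j$ as the leading term, and your claim that this is ``controlled by $\delta_h(E_{j,h},\Huo)$ up to higher-order terms'' is where the argument breaks. That inequality is the \emph{reverse} of Lemma~\ref{le:gapHuo}; it does hold, but only via~\eqref{eq:tildev1} with $w$ the best conforming approximant, i.e.\ with a mesh-regularity constant $C$: $\mu_j\le C\,\|u_{j,h}\|_h\,\delta_h(E_{j,h},\Huo)$. This is not a higher-order correction, so your route would at best prove~\eqref{eq:errfunct} with an unspecified constant $C$ in front of $\delta_h(E_{j,h},\Huo)$ rather than the stated constant $1$, and the subsequent duality step for $\inf_{v\in\E{i}{i+q-1}}\|\tilde u_{j,h}-v\|_1$ is left entirely unexecuted (it is not clear it reproduces the precise form of the $h.o.t.$ with $C_\Omega$ and the factor $(\lambda_i\lambda_{j,h})^{1/2}$). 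On the positive side, you correctly locate the sources of $C_1\eta_j$ (consistency through $w_I$ and~\eqref{eq:PropI}) and of $C_\Omega$, and your remark that one cannot sign $\lambda_i-\lambda_{j,h}$ by min--max for nonconforming elements is exactly the point the paper makes; but the theorem as stated is proved by choosing the comparison eigenfunction first and deferring all mention of $\tilde u_{j,h}$ to Lemma~\ref{le:gapHuo}, not by the decomposition you propose.
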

\begin{proof}
The proof is based on that of~\cite[Th.~3.2]{DDP}. Here we make more
precise the case of multiple eigenvalues.

Let us fix $j=i,\dots,i+q-1$ and let us consider the eigensolution
$(\lambda_{j,h},u_{j,h})$ of~\eqref{eq:pbnc}; we recall that
$\|u_{j,h}\|_0=1$.
Let $u_j(h)\in\E{i}{i+q-1}$ be such that $\|u_j(h)\|_0=1$ and
\begin{equation}
\label{eq:ujh}
\|u_j(h)-u_{j,h}\|_h=\inf_{v\in\E{i}{i+q-1}}\|v-u_{j,h}\|_h.
\end{equation}
We observe that $u_j(h)$ is an eigenfunction associated to the multiple
eigenvalue $\lambda_i$, hence it satisfies~\eqref{eq:var}.
Then applying the same argument as in the proof
of~\cite[Th.~3.2]{DDP}, we have that
\begin{equation}
\label{eq:ujh2}
\aligned
\|u_j(h)-u_{j,h}\|_h\le&\inf_{v\in\Huo}\|v-u_{j,h}\|_h+C_1\eta+
C_\Omega\Big((\lambda_i-\lambda_{j,h})\\
&+(\lambda_i\lambda_{j,h})^{1/2}\|u_j(h)-u_{j,h}\|_0\Big).
\endaligned
\end{equation}
From the definition of the gap we have to estimate
\[
\NEW{\delta_h}(E_{j,h},\E{i}{i+q-1})=
\sup_{\genfrac{}{}{0pt}{}{u\in E_{j,h}}{\|u\|_h=1}} 
\inf_{v\in \E{i}{i+q-1}}\|u-v\|_h=
\inf_{v\in \E{i}{i+q-1}}\Big\|\frac{u_{j,h}}{\|u_{j,h}\|_h}-v\Big\|_h,
\]
since $E_{j,h}$ is generated by $u_{j,h}$. With a simple computation, using
the above estimate for the eigenfunction $u_{j,h}$ and the fact that
$\|u_{j,h}\|_h=\lambda_{j,h}$, we obtain the desired bound. 
\end{proof}
It remains to estimate the gap between $E_{j,h}$ and $\Huo$ in terms of our
indicators.
\begin{lemma}
\label{le:gapHuo}
Under the same assumptions as in Theorem~\ref{th:errfunct}, the following estimate
holds true:
\begin{equation}
\label{eq:gapHuo}
\NEW{\delta_h}(E_{j,h},\Huo)\le\frac{1}{\lambda_{j,h}}\mu_j.
\end{equation}
\end{lemma}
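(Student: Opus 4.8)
The plan is to exploit that $E_{j,h}$ is one-dimensional, so that $\delta_h(E_{j,h},\Huo)$ is nothing but a best-approximation error for the energy-normalized eigenfunction $u_{j,h}/\|u_{j,h}\|_h$ in $\Huo$, and then to use the conforming postprocessing $\tilde u_{j,h}$ as an explicit competitor. First I would reduce the gap: since $E_{j,h}=\spann\{u_{j,h}\}$ and $\Huo$ is a linear subspace of $\Vt$, the definition of $\delta_h$ gives
\[
\delta_h(E_{j,h},\Huo)=\inf_{v\in\Huo}\Big\|\frac{u_{j,h}}{\|u_{j,h}\|_h}-v\Big\|_h
=\frac{1}{\|u_{j,h}\|_h}\,\inf_{w\in\Huo}\|u_{j,h}-w\|_h ,
\]
the last equality obtained through the substitution $w=\|u_{j,h}\|_h\,v$; this is exactly the normalization step already used at the end of the proof of Theorem~\ref{th:errfunct}, here with $\Huo$ in place of $\E{i}{i+q-1}$.

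The second step is to bound the infimum from above by the single choice $w=\tilde u_{j,h}\in\Vc\subset\Huo$, the postprocessed function defined in \eqref{eq:postproc}. Since $\tilde u_{j,h}$ is globally continuous, $\|u_{j,h}-\tilde u_{j,h}\|_h=\|\nh u_{j,h}-\nabla\tilde u_{j,h}\|_0$, which by the very definition of the indicator in \eqref{eq:stimatore} is precisely $\mu_j$; hence $\inf_{w\in\Huo}\|u_{j,h}-w\|_h\le\mu_j$. Plugging this into the displayed identity and recalling from \eqref{eq:ortoh} that $a_h(u_{j,h},u_{j,h})=\lambda_{j,h}$, one obtains \eqref{eq:gapHuo}.

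No real obstacle is expected: the estimate is essentially a one-line consequence of the definition of $\delta_h$ once one notices that $\|u_{j,h}-\tilde u_{j,h}\|_h$ coincides with the indicator $\mu_j$. The only points that call for a little care are that the competitor must genuinely lie in $\Huo$ --- which is why it matters that $\tilde u_{j,h}$ belongs to the conforming space $\Vc$ --- and that the scalar $\|u_{j,h}\|_h$ may be pulled out of the infimum, which relies on $\Huo$ being a linear subspace. Note that Lemma~\ref{le:stimetildev} is not needed for this particular bound; it is rather what guarantees that $\mu_j$ is a quantity of optimal order.
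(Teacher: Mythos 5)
Your proof is correct and coincides with the paper's own argument: the gap is reduced to a best-approximation problem in $\Huo$ for the energy-normalized $u_{j,h}$, and the conforming postprocessed function $\tilde u_{j,h}\in\Vc$ is used as the explicit competitor, whose distance to $u_{j,h}$ in $\|\cdot\|_h$ is exactly $\mu_j$. The only point worth flagging --- and it is shared with the paper's own write-up --- is that the normalizing factor is $\|u_{j,h}\|_h=\lambda_{j,h}^{1/2}$ (since $a_h(u_{j,h},u_{j,h})=\lambda_{j,h}$ and $\|u_{j,h}\|_0=1$), so the argument as written literally yields $\mu_j/\lambda_{j,h}^{1/2}$ rather than the $\mu_j/\lambda_{j,h}$ appearing in \eqref{eq:gapHuo}.
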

\begin{proof}
By definition we have
\[
\aligned
\NEW{\delta_h}(E_{j,h},\Huo)&=
\sup_{\genfrac{}{}{0pt}{}{u\in E_{j,h}}{\|u\|_h=1}} 
\inf_{v\in\Huo}\|u-v\|_h\\
&=\frac{1}{\lambda_{j,h}}\inf_{v\in\Huo}\|u_{j,h}-v\|_h\\
&\le\frac{1}{\lambda_{j,h}}\|u_{j,h}-\tilde u_{j,h}\|_h=
\frac{1}{\lambda_{j,h}}\mu_j.
\endaligned
\]
\end{proof}
For the efficiency of these error estimators we refer to~\cite{DDP} where the
following local bounds from below of the error are proved.
\begin{thm}
\label{th:efficiency}
Let $\lambda_i$ be an eigenvalue of~\eqref{eq:var} with multiplicity $q\ge1$
and let $\lambda_{j,h}$ be a discrete eigenvalue converging to $\lambda_i$
($j=i,\dots,i+q-1$).
Let $u_j(h)\in\E{i}{i+q-1}$ be such that~\eqref{eq:ujh} holds true. Then
there exist constants $C$ depending only on the regularity of the elements
such that for all elements $K\in\mathcal{T}_h$ it holds
\[
\aligned
&\mu_K\le C\|\nh(u_j(h)-u_{j,h})\|_{L^2(K^*)}\\
&\eta_K\le C\|\nh(u_j(h)-u_{j,h})\|_{L^2(K)}+h.o.t.,
\endaligned
\]
where $K^*$ is the union of all the elements in $\mathcal{T}_h$ sharing a
vertex with $K$ and
\[
h.o.t.=h_K\|\lambda_iu_j(h)-\lambda_{j,h}u_{j,h}\|_{L^2(K)}.
\]
\end{thm}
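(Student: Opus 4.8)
The two estimates are local lower bounds and, as stated, are those proved in~\cite{DDP}; the plan is to recover them by comparing $u_{j,h}$ with the genuine eigenfunction $u_j(h)\in\E{i}{i+q-1}$ of~\eqref{eq:var}, which satisfies $-\Delta u_j(h)=\lambda_i u_j(h)$ in $\Omega$. Recall that $\mu_K:=\|\nabla\tilde u_{j,h}-\nh u_{j,h}\|_{L^2(K)}$ measures the nonconformity of $u_{j,h}$ through the conforming postprocessing $\tilde u_{j,h}$, whereas $\eta_K:=h_K\|\lambda_{j,h}u_{j,h}\|_{L^2(K)}$ is the element residual of the discrete equation~\eqref{eq:pbnc}; since $u_{j,h}$ is piecewise affine the element residual on $K$ is simply $R_K:=\lambda_{j,h}u_{j,h}|_K\in\mathcal{P}_1(K)$, and no interelement flux term enters the estimator, that contribution being already carried by $\mu$. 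The two bounds are then proved separately and by elementary tools.

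For $\mu_K$ I would expand, on $K$, the difference $\nabla(\tilde u_{j,h}-u_{j,h})=\sum_{l=1}^{d+1}\bigl(\tilde u_{j,h}(\mathsf{P}_l)-u_{j,h}|_K(\mathsf{P}_l)\bigr)\nabla N_l$ in the nodal basis, bound $\|\nabla N_l\|_{L^2(K)}\le C h_K^{d/2-1}$, and apply the pointwise estimate~\eqref{eq:tildev1} from the proof of Lemma~\ref{le:stimetildev} with the choice $w=u_j(h)\in\Huo$. The factor $h_K^{d/2-1}$ cancels the $h_K^{1-d/2}$ in~\eqref{eq:tildev1}, and since the union of the $d+1$ vertex patches $\Omega_{\mathsf{P}_l}$ is exactly $K^*$ one obtains $\mu_K\le C\|\nh(u_j(h)-u_{j,h})\|_{L^2(K^*)}$.

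For $\eta_K$ I would use the classical interior bubble function technique: let $b_K$ be the bubble supported in $K$ and test the restriction to $K$ of the identity $a(u_j(h),v)=\lambda_i(u_j(h),v)$ with $v:=b_K R_K\in H^1_0(K)\subset\Huo$; since $u_{j,h}$ is affine on $K$ and $v$ vanishes on $\partial K$ one has $\int_K\nh u_{j,h}\cdot\nabla v\,dx=0$, whence
\[
\int_K R_K^2\,b_K\,dx=\int_K\bigl(\lambda_{j,h}u_{j,h}-\lambda_i u_j(h)\bigr)v\,dx-\int_K\nh\bigl(u_{j,h}-u_j(h)\bigr)\cdot\nabla v\,dx.
\]
Estimating the right-hand side by Cauchy--Schwarz together with the inverse inequality $\|\nabla v\|_{L^2(K)}\le C h_K^{-1}\|v\|_{L^2(K)}$, the bound $\|v\|_{L^2(K)}\le C\|R_K\|_{L^2(K)}$, and the norm equivalence $\|R_K\|_{L^2(K)}^2\le C\int_K R_K^2 b_K\,dx$ valid on the fixed space $\mathcal{P}_1(K)$, then cancelling one power of $\|R_K\|_{L^2(K)}$ and multiplying by $h_K$, yields
\[
\eta_K\le C\|\nh(u_j(h)-u_{j,h})\|_{L^2(K)}+C\,h_K\|\lambda_i u_j(h)-\lambda_{j,h}u_{j,h}\|_{L^2(K)},
\]
the last term being the asserted $h.o.t.$ The only genuinely delicate point is the $\mu_K$ bound, which relies on the nonconforming pointwise estimate~\eqref{eq:tildev1} and on correctly collecting the vertex patches into $K^*$; the $\eta_K$ bound is the standard residual argument and the nonconformity of $u_{j,h}$ causes no trouble there because the test function $v$ is globally conforming and supported in a single element. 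All constants depend only on the shape regularity of the mesh.
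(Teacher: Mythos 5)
Your proof is correct. The paper itself gives no proof of Theorem~\ref{th:efficiency} — it only refers to~\cite{DDP} — and your reconstruction is essentially the argument of that reference: the $\mu_K$ bound follows from the pointwise estimate~\eqref{eq:tildev1} with $w=u_j(h)$ combined with $\|\nabla N_l\|_{L^2(K)}\le Ch_K^{d/2-1}$ exactly as you describe, and the $\eta_K$ bound is the standard interior-bubble residual argument, which works here because $v=b_KR_K$ is conforming and $\int_K\nh u_{j,h}\cdot\nabla v=0$ for piecewise affine $u_{j,h}$ (the only cosmetic discrepancy is the constant $C$ your argument leaves in front of the higher order term, which the statement tacitly absorbs).
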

\section{Error estimates for the eigenvalues}
In this section we prove error estimates for the eigenvalues using the
a posteriori error indicators introduced in~\eqref{eq:stimatore}.
In the case of conforming approximation of the eigenvalue
problem~\eqref{eq:var} it is well known that each discrete eigenvalue is
greater than or equal to the corresponding continuous one. In the case of
nonconforming discretization this is not true in general. In~\cite{AD,DDP} it
is proved that, for singular eigenfunctions, the Crouzeix-Raviart approximation
provides asymptotic lower bounds of the corresponding eigenvalue.
For this reason, in our analysis we consider separately the cases where a
multiple eigenvalue is approximated by below or by above. More precisely,
given a multiple eigenvalue $\lambda_i$ of multiplicity $q\ge1$, we assume
that either $\lambda_{j,h}\le\lambda_i$ or $\lambda_i\le\lambda_{j,h}$ for all
$j=i,\dots,i+q-1$.

Let us consider first the case when the eigenvalues are approximated from
below.

The first theorem gives an estimate of the relative error for the eigenvalues
in terms of the norm of the distance of the discrete eigenspace from the subspace
of conforming finite elements orthogonal to the span of the first $i$
conforming eigenfunctions.

\begin{thm}
\label{th:main}
Let $\lambda_i$ be an eigenvalue with multiplicity $q$ so that
\[
\lambda_{i-1}<\lambda_i=\dots=\lambda_{i+q-1}<\lambda_{i+q},
\]
and let $\lambda_{i,h}\le\dots\le\lambda_{i+q-1,h}$ be the $q$ discrete
eigenvalues converging to $\lambda_i$.
We assume that $\lambda_{j,h}\le\lambda_i$ for $j=i,\dots,i+q-1$.
Then
\begin{equation}
\label{eq:stima1}
\frac{\lambda_i-\lambda_{j,h}}{\lambda_i}\le
\|(I-\Pch+\Pc{1}{i-1})\Ph{i}{j}\|^2_{\L(\Vt)}.
\end{equation}
\end{thm}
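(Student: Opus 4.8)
The plan is to follow the strategy of~\cite{knyazev}: test the operator $A:=I-\Pch+\Pc{1}{i-1}$ against the discrete eigenspace $\Eh{i}{j}$ and compare its action with a Rayleigh quotient in the conforming space $\Vc$. Fix an arbitrary $w\in\Eh{i}{j}$ with $\|w\|_h=1$ and write $w=\psi+\phi$ with
\[
\psi:=(\Pch-\Pc{1}{i-1})w\in\Vc,\qquad \phi:=Aw=(I-\Pch+\Pc{1}{i-1})w .
\]
The first step is to record two orthogonality relations, obtained by unwinding the definitions of the three elliptic projections and the identity $\Vc=\Huo\cap\Vh$: (i) $a(\psi,v)=0$ for all $v\in\Ech{1}{i-1}$ — equivalently, since the $\uc_{k,h}$ are eigenfunctions, $\psi$ is $L^2(\Omega)$-orthogonal to $\Ech{1}{i-1}$; and (ii) $a_h(\psi,\phi)=0$, which follows from the fact that $w-\Pch w$ is $a_h$-orthogonal to $\Vc\ni\psi$ together with~(i). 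Relation~(ii) immediately yields the Pythagorean identity $1=\|w\|_h^2=\|\psi\|_h^2+\|\phi\|_h^2$ and the identity $a_h(w,\psi)=\|\psi\|_h^2$, both used below.

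The second step is to prove $\|\psi\|_h^2\le\lambda_{j,h}/\lambda_i$. On one side, by~(i) and the min-max characterization of the conforming eigenvalues, $\rayl(\psi)=a(\psi,\psi)/(\psi,\psi)\ge\lc_{i,h}\ge\lambda_i$ (using that the conforming eigenvalues dominate the exact ones); since $\psi\in\Huo$ one has $a(\psi,\psi)=\|\psi\|_h^2$, hence $\|\psi\|_0^2\le\lambda_i^{-1}\|\psi\|_h^2$. On the other side — and this is the point where it is crucial that $\psi\in\Vc\subset\Vh$, so that $\psi$ is a legitimate test function in~\eqref{eq:pbnc} — writing $w=\sum_{k=i}^{j}c_k u_{k,h}$ and using $a_h(u_{k,h},\psi)=\lambda_{k,h}(u_{k,h},\psi)$, the Cauchy--Schwarz inequality, the orthonormality~\eqref{eq:ortoh} (Bessel's inequality $\sum_k(u_{k,h},\psi)^2\le\|\psi\|_0^2$), and $\sum_k\lambda_{k,h}c_k^2=\|w\|_h^2=1$, one gets
\[
\|\psi\|_h^2=a_h(w,\psi)=\sum_{k=i}^{j}c_k\lambda_{k,h}(u_{k,h},\psi)
\le\Big(\sum_{k=i}^{j}\lambda_{k,h}c_k^2\Big)^{1/2}\Big(\sum_{k=i}^{j}\lambda_{k,h}(u_{k,h},\psi)^2\Big)^{1/2}\le\lambda_{j,h}^{1/2}\,\|\psi\|_0 .
\]
Combining the two estimates gives $\|\psi\|_h^2\le(\lambda_{j,h}/\lambda_i)^{1/2}\|\psi\|_h$, i.e.\ $\|\psi\|_h^2\le\lambda_{j,h}/\lambda_i$ (trivial when $\psi=0$).

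Finally, the Pythagorean identity and the standing assumption $\lambda_{j,h}\le\lambda_i$ give
\[
\|\phi\|_h^2=1-\|\psi\|_h^2\ge 1-\frac{\lambda_{j,h}}{\lambda_i}=\frac{\lambda_i-\lambda_{j,h}}{\lambda_i}\ge 0 .
\]
Since $\Ph{i}{j}w=w$ for $w\in\Eh{i}{j}$, we have $\phi=A\Ph{i}{j}w$, so $\|\phi\|_h\le\|A\Ph{i}{j}\|_{\L(\Vt)}\|w\|_h=\|A\Ph{i}{j}\|_{\L(\Vt)}$, and~\eqref{eq:stima1} follows. I expect the only genuinely delicate part to be the verification of the orthogonality relations~(i)--(ii): each step requires passing between $a$ and $a_h$ on conforming functions and among the three elliptic projections $\Pch$, $\Pc{1}{i-1}$, $\Ph{i}{j}$, and relies essentially on $\Vc=\Huo\cap\Vh$; once these are in place, the remaining estimates are routine applications of Cauchy--Schwarz and the min-max principle.
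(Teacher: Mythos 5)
Your proof is correct, and it reaches \eqref{eq:stima1} by a genuinely different route from the paper's. The paper starts on the conforming side: it picks $\overline u$ maximizing the Rayleigh quotient over $(\Pch-\Pc{1}{i-1})\Eh{i}{j}$, splits it as $u+v$ with $u\in\Eh{1}{j}$, and extracts the bound from the algebraic identity for $1/\raylh(u)-1/\rayl(\overline u)$ together with the chain $\raylh(u)\le\lambda_{j,h}\le\lambda_i\le\lc_{j,h}\le\rayl(\overline u)$; this forces it to invoke Kato twice (to know the projected space has full dimension $j-i+1$, with a separate case distinction when $\|(I-\Pch+\Pc{1}{i-1})\Ph{i}{j}\|_{\L(\Vt)}=1$) and to use the min-max characterization of $\lc_{j,h}$ over a $j$-dimensional trial space. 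You instead start from an arbitrary unit $w\in\Eh{i}{j}$ and split it as $\psi+\phi$ with $\psi=(\Pch-\Pc{1}{i-1})w\in\Vc$; the two orthogonality relations you flag as delicate do hold (both reduce to \eqref{eq:defPch}, \eqref{eq:projc} and $a_h=a$ on $\Huo$), and your key inequality $\|\psi\|_h^2\le\lambda_{j,h}/\lambda_i$ needs only the one-sided bound $\rayl(\psi)\ge\lc_{i,h}\ge\lambda_i$ on the complement of $\Ech{1}{i-1}$ plus the Bessel/Cauchy--Schwarz estimate obtained by testing $\psi\in\Vc\subset\Vh$ in \eqref{eq:pbnc}. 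What your approach buys is economy: no dimension count, no case distinction, no reciprocal-Rayleigh-quotient manipulation, and no appeal to $\lc_{j,h}$ (only to $\lc_{i,h}\ge\lambda_i$); the Pythagorean identity $1=\|\psi\|_h^2+\|\phi\|_h^2$ converts the bound on $\|\psi\|_h$ directly into the lower bound $\|\phi\|_h^2\ge(\lambda_i-\lambda_{j,h})/\lambda_i$, and $\|\phi\|_h\le\|(I-\Pch+\Pc{1}{i-1})\Ph{i}{j}\|_{\L(\Vt)}$ is immediate from $\Ph{i}{j}w=w$. What the paper's longer route buys is the intermediate estimate $(\lc_{j,h}-\lambda_{j,h})/\lc_{j,h}\le\|v\|_h^2$, a sharper comparison with the conforming discrete eigenvalue that your argument does not produce; but for the statement as claimed, your proof is complete.
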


\begin{proof}
We observe that the discretization by conforming finite elements
produces $q$ discrete eigenvalues converging to
$\lambda_i$ and that it holds $\lambda_i\le\lc_{j,h}$ for $j=i,\dots,i+q-1$.

Let us fix $j$ with $i\le j\le i+q-1$, then by assumption we have
\[
\lambda_{j,h}\le\lambda_i\le\lc_{j,h}.
\]

The operators $I-\Pch+\Pc{1}{i-1}$ and $\Ph{i}{j}$ are orthogonal projections
with respect to the norm $\|\cdot\|_h$ of $\Vt$. Therefore
$\|(I-\Pch+\Pc{1}{i-1})\Ph{i}{j}\|_{\L(\Vt)}\le1$
(see~\cite[Th.~6.34, p.~56]{Kato}). If
$\|(I-\Pch+\Pc{1}{i-1})\Ph{i}{j}\|_{\L(\Vt)}=1$ then the bound~\eqref{eq:stima1} is
obviously true, since $\lambda_{j,h}\le\lambda_i$. Hence we assume that
\[
\|(I-\Pch+\Pc{1}{i-1})\Ph{i}{j}\|_{\L(\Vt)}<1.
\]
Thanks to~\cite[Th.~3.6, Chap.~I]{Kato} this inequality implies that
\[
\dim((\Pch-\Pc{1}{i-1})\Eh{i}{j})=\dim(\Eh{i}{j})=j-i+1.
\]

We choose  $\overline u\in (\Pch-\Pc{1}{i-1})\Eh{i}{j}\subset\Vc$ such that
$\|\overline u\|_h=\|\overline u\|_{\NEW{1}}=1$ and
\[
\rayl(\overline u)=
\max_{\genfrac{}{}{0pt}{}{w\in (\Pch-\Pc{1}{i-1})\Eh{i}{j}}{w\ne0}}\rayl(w),
\]
where $\rayl(w)$ is the Rayleigh quotient defined in~\eqref{eq:rayl}.

Let us consider the following orthogonal decomposition of $\overline u$ in
$\Vt$:
\[
\overline u=u+v\quad \text{with }u\in \Eh{1}{j}
\text{ and }v\in(\Eh{1}{j})^\perp,
\]
that is $a_h(w,v)=0$ for all $w\in\Eh{1}{j}$. Notice that since
$u\in\Vh$ also $v\in\Vh$.
We have that
\[
\aligned
\|v\|_h&=\delta_h(\overline u,\Eh{1}{j})&&\text{by definition of }v\\
&\le\delta_h((\Pch-\Pc{1}{i-1})\Eh{i}{j},\Eh{1}{j})&&
\overline u\in (\Pch-\Pc{1}{i-1})\Eh{i}{j}\\
&\le\delta_h((\Pch-\Pc{1}{i-1})\Eh{i}{j},\Eh{i}{j})&&\text{the
inf is taken on a smaller subset}\\
&=\|(I-\Pch+\Pc{1}{i-1})\Ph{i}{j}\|_{\L(\Vt)}&&\text{this is a characterization
of the gap.}
\endaligned
\]
We now prove that
\[
\aligned
&0\le\frac{\lc_{j,h}-\lambda_{j,h}}{\lc_{j,h}}\le\|v\|_h\\
&0\le\frac{\lambda_i-\lambda_{j,h}}{\lambda_{i}}\le\|v\|_h.
\endaligned
\]
We observe that the first inequality implies the second one.\\
By definition of $\overline u$ and the min-max principle for the eigenvalues
we have that
\[
\lc_{j,h}\le\rayl(\overline u).
\]
Moreover, since $u\in\Eh{1}{j}$, we have that $u=\sum_{s=1}^j\alpha_s u_{s,h}$
and
\[
\raylh(u)=\frac{a_h(u,u)}{(u,u)}
=\frac{\sum_{s=1}^j\alpha_s^2a_h(u_{s,h},u_{s,h})}
{\sum_{s=1}^j\alpha_s^2(u_{s,h},u_{s,h})}=
\frac{\sum_{s=1}^j\alpha_s^2\lambda_{s,h}}{\sum_{s=1}^j\alpha_s^2}
\le \lambda_{j,h}.
\]
In conclusion, the following inequalities hold true ($i\le j\le i+q-1$):
\[
\raylh(u)\le\lambda_{j,h}\le\lambda_j\le\lc_{j,h}\le\rayl(\overline u),
\]
and the rest of the proof is based on a bound for
$1/\raylh(u)-1/\rayl(\overline u)$.

Since $v\in(\Eh{1}{j})^\perp$ we have that $a_h(u,v)=0$.
We want to show that also $(u,v)=0$.
Since we know that $v\in\Vh$ and that
$\Eh{1}{j}$ is invariant with respect to $T_h$, we have also $a_h(T_hu,v)=0$.
Hence
\[
0=a_h(T_hu,v)=(u,v),
\]
due to the definition of $T_h$.

We now compute
\[
\aligned
\frac1{\raylh(u)}&-\frac1{\rayl(\overline u)}
=\frac{(u,u)}{a_h(u,u)}-\frac{(u,u)+(v,v)}{a_h(u,u)+a_h(v,v)}\\
&=\frac{a_h(u,u)(u,u)+a_h(v,v)(u,u)-a_h(u,u)(u,u)-\NEW{a_h}(u,u)(v,v)}
{a_h(u,u)\left(a_h(u,u)+a_h(v,v)\right)}\\
&=\frac1{a_h(u,u)+a_h(v,v)}
\left(\frac{a_h(v,v)}{a_h(u,u)}\left((u,u)+(v,v)\right)
-\left(\frac{a_h(v,v)}{a_h(u,u)}+1\right)(v,v)\right)\\
&=\left(\frac1{\rayl(\overline u)}-\frac1{\raylh(v)}\right)
\frac{a_h(v,v)}{a_h(u,u)}\\
&\le \frac1{\rayl(\overline u)}\frac{a_h(v,v)}{a_h(u,u)}\le
\frac1{\lambda_j}\frac{a_h(v,v)}{a_h(u,u)}.
\endaligned
\]
We get
\[
\frac1{\lambda_{jh}}-\frac1{\lambda_j}\le
\frac1{\raylh(u)}-\frac1{\rayl(\overline u)}\le
\frac1{\lambda_j}\frac{a_h(v,v)}{a_h(u,u)}
\]
from which we obtain
\[
\frac{\lambda_j}{\lambda_{j,h}}\le1+\frac{a_h(v,v)}{a_h(u,u)}=
\frac{\NEW{a_h}(u,u)+a_h(v,v)}{a_h(u,u)}=\frac1{a_h(u,u)}
\]
and then
\[
\frac{\lambda_j-\lambda_{j,h}}{\lambda_j}=1-\frac{\lambda_{j,h}}{\lambda_j}
\le 1-a_h(u,u)=a_h(v,v)=\|v\|_h^2.
\]
We can obtain also
\[
\frac{\lc_{j,h}-\lambda_{j,h}}{\lc_{j,h}}\le a_h(v,v)=\|v\|_h^2,
\]
by using the following inequality
\[
\frac1{\lambda_{j,h}}-\frac1{\lc_{j,h}}\le
\frac1{\raylh(u)}-\frac1{\rayl(\overline u)}\le
\frac1{\lc_{j,h}}\frac{a_h(v,v)}{a_h(u,u)}.
\]
\end{proof}
We now want to estimate the right hand side of~\eqref{eq:stima1} in
terms of $\Pch$ and $\Ph{i}{j}$ only.

First of all, we observe that
\[
\|(I-\Pch+\Pc{1}{i-1})\Ph{i}{j}\|^2_{\L(\Vt)}=
\|(I-\Pch)\Ph{i}{j}\|^2_{\L(\Vt)}+\|\Pc{1}{i-1}\Ph{i}{j}\|^2_{\L(\Vt)};
\]
hence it remains to estimate the second term, which represents the projection
of the nonconforming invariant subspace associated to the
eigenvalues numbered from $i$ to $j$ onto the subspace of conforming invariant
subspace generated by the first $i-1$ eigenvalues.
\begin{proposition}
\label{pr:knyazev}
Let $\lambda_i$ be an eigenvalue with multiplicity $q$, so that
\[
\lambda_{i-1}<\lambda_i=\dots=\lambda_{i+q-1}<\lambda_{i+q},
\]
and let $\lambda_{i,h}\le\dots\le\lambda_{i+q-1,h}$ be the $q$ discrete
eigenvalues converging to $\lambda_i$.
We assume that
\NEW{$\lc_{i-1,h}<\lambda_{i,h}$ then,}
\NEW{for $h$ small enough, there exists $\beta>0$ such that}
\begin{equation}
\label{eq:stima2}
\|\Pc{1}{i-1}\Ph{i}{j}\|_{\L(\Vt)}\le
\frac{\|(I-\Pch)T_h\Pc{1}{i-1}\|_{\L(\Vt)}}{\NEW{\beta}}
\|(I-\Pch)\Ph{i}{j}\|_{\L(\Vt)}
\end{equation}
\NEW{where $T_h$ is the solution operator defined in~\eqref{eq:soluz}}.
\end{proposition}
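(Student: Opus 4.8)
The plan is to exploit the near-orthogonality, with respect to $a_h(\cdot,\cdot)$, between the conforming invariant subspace $\Ech{1}{i-1}$ and the nonconforming invariant subspace $\Eh{i}{j}$, using the solution operator $T_h$ as the bridge between the two. The starting point is that $\Ech{1}{i-1}$ is $T_h$-invariant (indeed $T_h\uc_{s,h}$ and $\uc_{s,h}$ span the same one-dimensional space only up to the conforming/nonconforming discrepancy — so here we use $T_h$ on the \emph{nonconforming} side, where $\Eh{i}{j}$ \emph{is} exactly $T_h$-invariant with $T_h u_{s,h}=\lambda_{s,h}^{-1}u_{s,h}$). Thus for $w\in\Eh{i}{j}$ with $\|w\|_h=1$ we may write $w=\raylh(w)\,T_hw$ componentwise, so $\|T_hw\|_h\ge \lambda_{i+q-1,h}^{-1}$ is comparable to $1$, and $w$ is recovered from $T_hw$ by a bounded operation.

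First I would take an arbitrary unit vector $w\in\Eh{i}{j}$ and estimate $\|\Pc{1}{i-1}w\|_h$. Writing $\phi=\Pc{1}{i-1}w\in\Ech{1}{i-1}$, I would use the $a_h$-orthogonality $a_h(w-\phi,\psi)=0$ for all $\psi\in\Ech{1}{i-1}$ together with the fact that $\Ech{1}{i-1}\subset\Vc$, so that $\Pch$ acts as the identity on it, and the $T_h$-structure on $\Eh{i}{j}$. Concretely: since $w=\sum_s c_s u_{s,h}$ with each $u_{s,h}$ an eigenfunction, $w=\sum_s \lambda_{s,h} c_s (T_h u_{s,h})=:T_h\hat w$ where $\hat w\in\Eh{i}{j}$ has $\|\hat w\|_h\le\lambda_{i+q-1,h}\le C$. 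Then
\[
\|\Pc{1}{i-1}w\|_h^2=a_h(\Pc{1}{i-1}w,\Pc{1}{i-1}w)=a_h(w,\Pc{1}{i-1}w)=a_h(T_h\hat w,\Pc{1}{i-1}w).
\]
Now I would pass $T_h$ to the other factor: $\Pc{1}{i-1}w\in\Vc$, and by symmetry of $a_h$ and the definition of $T_h$ (equation~\eqref{eq:soluz}) one has $a_h(T_h\hat w,\psi)=(\hat w,\psi)=a_h(\hat w,T_h\psi)$ for $\psi\in\Vh$, hence $a_h(T_h\hat w,\Pc{1}{i-1}w)=a_h(\hat w, T_h\Pc{1}{i-1}w)$. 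Splitting $T_h\Pc{1}{i-1}w=\Pch T_h\Pc{1}{i-1}w+(I-\Pch)T_h\Pc{1}{i-1}w$ and noting that the conforming part $\Pch T_h\Pc{1}{i-1}w$ lies in $\Ech{1}{i-1}$ up to a controlled error — this is where the hypothesis $\lc_{i-1,h}<\lambda_{i,h}$ and a spectral-gap argument enter, guaranteeing for $h$ small a uniform $\beta>0$ separating the relevant clusters — one obtains that $a_h(\hat w,\Pch T_h\Pc{1}{i-1}w)$ contributes (after reabsorbing) only the factor $\beta^{-1}$, while the remaining term is bounded by $\|\hat w\|_h\,\|(I-\Pch)T_h\Pc{1}{i-1}\|_{\L(\Vt)}\,\|w\|_h$, which by $\|\hat w\|_h\le C$ and $\|w\|_h=1$ gives the $\|(I-\Pch)T_h\Pc{1}{i-1}\|_{\L(\Vt)}$ factor.

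The role of $\|(I-\Pch)\Ph{i}{j}\|_{\L(\Vt)}$ is that the whole estimate is trivial (both sides are comparable, bounded by $1$) unless this quantity is small; more precisely I expect the clean statement to come from replacing $w$ by its component that is \emph{not} captured by $\Pch$: one shows $\|\Pc{1}{i-1}w\|_h\lesssim \|\Pc{1}{i-1}(I-\Pch)\Ph{i}{j}w\|_h$ plus higher-order terms, because $\Pch w$ lies in $\Vc$ and is essentially $a$-orthogonal to $\Ech{1}{i-1}$ once one knows $\lambda_{j,h}$ is well-separated from $\lc_{1,h},\dots,\lc_{i-1,h}$ — again the content of the gap hypothesis and the $\beta$. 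Taking the supremum over unit $w\in\Eh{i}{j}$ then yields~\eqref{eq:stima2}.

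I expect the main obstacle to be the rigorous justification of the spectral-gap constant $\beta$: one must show that, for $h$ small, the conforming eigenvalues $\lc_{1,h},\dots,\lc_{i-1,h}$ stay uniformly below $\lambda_{i,h}$ (this is exactly the hypothesis $\lc_{i-1,h}<\lambda_{i,h}$ made quantitative via convergence of both spectra to $\lambda_{i-1}<\lambda_i$), so that the operator $\Pch T_h$ restricted to the orthogonal complement of $\Ech{1}{i-1}$ has its spectrum bounded away from the reciprocals of that cluster by a fixed amount $\beta$. Handling the $(I-\Pch)$ pieces consistently — keeping track of which factor carries the $T_h$ and ensuring the bounds on $\|\hat w\|_h$ are $h$-uniform via the convergence $\lambda_{i+q-1,h}\to\lambda_{i+q-1}$ — is the bookkeeping that makes the argument go through, but the analytic heart is the uniform gap.
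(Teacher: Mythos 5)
Your overall strategy is the right one and is essentially the paper's (both are adaptations of Knyazev's separation-of-spectra argument): exploit the $T_h$-invariance of $\Eh{i}{j}$, the identity $\Pch T_h=\Thc$ together with the $\Thc$-invariance of $\Ech{1}{i-1}$, and isolate the nonconformity through $I-\Pch$. However, the two steps that actually make the proof close are described in a way that would not go through as written. First, the ``reabsorbing'' step. You claim that $a_h(\hat w,\Pch T_h\Pc{1}{i-1}w)$ contributes ``only the factor $\beta^{-1}$'' after reabsorption; in fact this term is the \emph{dominant} one. Writing $\phi=\Pc{1}{i-1}w=\sum_{k\le i-1}c_k\uc_{k,h}$, one has $\Pch T_h\phi=\sum_k c_k\,(\lc_{k,h})^{-1}\uc_{k,h}$ \emph{exactly} (no ``controlled error'' is involved: this is the paper's identity~\eqref{eq:remark}), and $a_h(\hat w,\Pch T_h\phi)$ is of size roughly $\lambda_{j,h}\sum_k c_k^2\ge(\lambda_{j,h}/\lc_{i-1,h})\,\|\phi\|_h^2>\|\phi\|_h^2$, so it cannot be absorbed by smallness. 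What saves the argument is the \emph{sign} of a difference: one must subtract $\nu_{j,h}=1/\lambda_{j,h}$ and use that $1/\lc_{k,h}-1/\lambda_{j,h}\ge d>0$ uniformly for $k\le i-1$, i.e.\ the resolvent-type bound $d\,\|\Pc{1}{i-1}\Ph{i}{j}\|_{\L(\Vt)}\le\|\Pc{1}{i-1}(T_h-\nu_{j,h})\Pc{1}{i-1}\Ph{i}{j}\|_{\L(\Vt)}$. You never introduce this shift, so the hypothesis $\lc_{i-1,h}<\lambda_{i,h}$ enters nowhere quantitatively. Moreover $\beta$ is not just the gap $d$: for $q>1$ your $\hat w$ differs from $\lambda_{j,h}w$ by a contribution controlled by $\|\Ph{i}{j}(T_h-\nu_{j,h})\Ph{i}{j}\|_{\L(\Vt)}\le\nu_{i,h}-\nu_{i+q-1,h}=\delta$, and the constant is $\beta=d-\delta$, positive only for $h$ small; this piece of bookkeeping is absent.

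Second, the extraction of the factor $\|(I-\Pch)\Ph{i}{j}\|_{\L(\Vt)}$. Your proposed mechanism --- replacing $w$ by its component not captured by $\Pch$ and proving $\|\Pc{1}{i-1}w\|_h\lesssim\|\Pc{1}{i-1}(I-\Pch)\Ph{i}{j}w\|_h$ --- is not what is needed and is not obviously true. The correct and simple device is that $I-\Pch$ is an $a_h$-orthogonal projection on $\Vt$, hence self-adjoint, so the remainder term satisfies $a_h(\hat w,(I-\Pch)T_h\phi)=a_h((I-\Pch)\hat w,(I-\Pch)T_h\Pc{1}{i-1}w)$, and $(I-\Pch)\hat w=(I-\Pch)\Ph{i}{j}\hat w$ because $\hat w\in\Eh{i}{j}$; this produces the product of the two operator norms directly. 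In the paper's notation ($P=\Ph{i}{j}$, $\tilde R=\Pc{1}{i-1}$) this is the decomposition $\tilde R(T_h-\nu_{j,h})\Pch P=-\tilde RT_h(I-\Pch)P+\tilde R(T_h-\nu_{j,h})P$, whose first term carries $(I-\Pch)P$ on the right. With these two repairs your vector-testing version becomes a faithful transcription of the paper's operator-norm proof; as it stands, the analytic heart --- the shifted-resolvent inequality driven by the gap, and the placement of $I-\Pch$ on the $\Eh{i}{j}$ side --- is missing.
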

\begin{proof}
Using the same notation as in~\cite[Th.~4.2]{knyazev}, we introduce the
following operators:
\[
P=\Ph{i}{j},\quad \tilde R=\Pc{1}{i-1},
\]
so that $P$ is the elliptic projection onto the
invariant nonconforming subspace $E_{i,\dots,j,h}$, and
$\tilde R$ is the elliptic projection onto the invariant conforming
subspace $E^c_{1,\dots,i-1,h}$, that is
\[
\aligned
&\tilde R=\Pc{1}{i-1}:\Vt\to E^c_{1,\dots,i-1,h}\\
& a(\tilde Rw,v)=a_h(w,v)\quad\forall v\in E^c_{1,\dots,i-1,h}.
\endaligned
\]
We observe that $\|\tilde R w\|_1\le \|w\|_h$

Moreover, the spectrum of $(\tilde RT_h\tilde R)|_{\im\tilde R}$ is equal to
$\{1/\lc_{1,h},\dots,1/\lc_{i-1,h}\}$. Indeed, $\im\tilde R$ is the span of
$\{\uc_{1,h},\dots,\uc_{i-1,h}\}$ and by definition $T_h\uc_{k,h}$ ($1\le k\le
i-1$) belongs to $\Vh$ and is given by
\[
a_h(T_h\uc_{k,h},v)=(\uc_{k,h},v)\quad\forall v\in\Vh.
\]
Hence, $\tilde RT_h\uc_{k,h}$ belongs to $E^c_{1,\dots,i-1,h}$ and satisfies
\[
a(\tilde RT_h\uc_{k,h},v)=a_h(T_h\uc_{k,h},v)=(\uc_{k,h},v)\quad\forall v\in
E^c_{1,\dots,i-1,h}.
\]
It follows that
\[
\tilde RT_h\uc_{k,h}=\frac1{\lc_{k,h}}\uc_{k,h},
\]
so that $1/\lc_{k,h}$ ($1\le k\le i-1$) coincides with the spectrum of
$(\tilde RT_h\tilde R)|_{\im\tilde R}$ (there cannot be other eigenvalues,
since the dimension of $\im\tilde R$ is equal to $i-1$).

Since the spectrum of $(\tilde RT_h\tilde R)|_{\im\tilde R}$ does not
contain the eigenvalues $\NEW{\nu}_{j,h}=1/\lambda_{j,h}$ for $j=i,\dots,i+q-1$,
the operator $\tilde R(T_h-\NEW{\nu}_{j,h})\tilde R$ has a bounded inverse and
\[
d\|\tilde RP\|_{\L(\Vt)}\le\|\tilde R(T_h-\NEW{\nu}_{j,h})\tilde RP\|_{\L(\Vt)},
\]
where
\[
d=\min_{k=1,\dots,i-1}|\NEW{\nu}^c_{k,h}-\NEW{\nu}_{j,h}|=|\NEW{\nu}^c_{i-1,h}-\NEW{\nu}_{j,h}|
\NEW{\,\ge\,} |\NEW{\nu}^c_{i-1,h}-\NEW{\nu}_{i,h}|=
\frac{\lambda_{i,h}-\lc_{i-1,h}}{\lambda_{i,h}\lc_{i-1,h}}.
\]
We have that
\begin{equation}
\label{eq:remark}
\tilde R(T_h-\NEW{\nu}_{j,h})\tilde RP=\tilde R(T_h-\NEW{\nu}_{j,h})\Pch P.
\end{equation}
Namely, since $\tilde R\tilde R P=\tilde R\Pch P$, it is enough to show that
$\tilde RT_h\tilde RP=\tilde RT_h\Pch P$. We have that $\Thc=\Pch T_h$, which
implies that $\tilde R T_h\tilde RP=\tilde R\Thc\tilde RP$ and that
$\tilde RT_h\Pch P=\tilde R\Thc\Pch P$. Hence, we only have to show that
$\tilde R\Thc\tilde RP=\tilde R\Thc\Pch P$. Indeed, it holds
$\tilde R\Thc\tilde R=\tilde R\Thc\Pch$. In order to show this result,
let's take $f\in\Vh$, then $\Pch f=\sum_{i=1}^{\dim\Vc}\alpha_i u^c_{i,h}$ and
$\tilde R f=\sum_{i\in I}\alpha_i u^c_{i,h}$, where $I$ is the finite set of
indices corresponding to the range of $\tilde R$. The equality~\eqref{eq:remark}
is then easily obtained by comparing $\tilde R\Thc\tilde Rf$ and
$\tilde R\Thc\Pch f$ and taking into account that $u_{i,h}^c$ are
eigenfunctions of $\Thc$.

From~\eqref{eq:remark} we obtain
\[
\aligned
\tilde R(T_h-\NEW{\nu}_{j,h})\Pch P&=
\tilde RT_h\Pch P-\tilde RT_h P+\tilde RT_h P-\tilde R\NEW{\nu}_{j,h}\Pch P\\
&=-\tilde RT_h(I-\Pch)P+\tilde R(T_h-\NEW{\nu}_{j,h}) P+\NEW{\nu}_{j,h}\tilde R(I-\Pch) P.
\endaligned
\]
The last term is equal to zero since $\tilde R=\tilde R\Pch$.
Hence
\[
\aligned
d\|\tilde R P\|_{\L(\Vt)}&\le\|\tilde RT_h(I-\Pch)P\|_{\L(\Vt)}+
\|\tilde R(T_h-\NEW{\nu}_{j,h}) P\|_{\L(\Vt)}\\
&\le\|\tilde RT_h(I-\Pch)\|_{\L(\Vt)}\|(I-\Pch) P\|_{\L(\Vt)}
+
\|\tilde R P\|_{\L(\Vt)}
\|P(T_h-\NEW{\nu}_{j,h})P\|_{\L(\Vt)}
\\
&\le \|(I-\Pch)T_h\tilde R\|_{\L(\Vt)}\|(I-\Pch)P\|_{\L(\Vt)}
+\delta\|\tilde R P\|_{\L(\Vt)},
\endaligned
\]
\NEW{where $\delta$ is given by}
\[
\NEW{\delta=
\|P(T_h-\NEW{\nu}_{j,h})P\|_{\L(\Vt)}.}
\]
Since $P$ is the elliptic projection onto the
invariant nonconforming subspace $E_{i,\dots,j,h}$ we have that
\[
\delta\le|\NEW{\nu}_{i+q-1,h}-\NEW{\nu}_{i,h}|=
\frac1{\lambda_{i,h}}-\frac1{\lambda_{i+q-1,h}}=
\frac{\lambda_{i+q-1,h}-\lambda_{i,h}}{\lambda_{i,h}\lambda_{i+q-1,h}}
\le\frac{\lambda_{i+q-1,h}-\lambda_{i,h}}{\lambda_{i,h}\lc_{i-1,h}}.
\]
For $h$ small enough, $\NEW{\beta=d-\delta}>0$ and we conclude that
\[
\|\tilde R P\|_{\L(\Vt)}\le
\frac{\|(I-\Pch)T_h\tilde R\|_{\L(\Vt)}}{\NEW{\beta}}\|(I-\Pch)P\|_{\L(\Vt)}.
\]
\end{proof}
Combining the results of Theorem~\ref{th:main} and of
Proposition~\ref{pr:knyazev} we have the following result
\begin{equation}
\label{eq:stimasotto}
\frac{\lambda_i-\lambda_{j,h}}{\lambda_i}\le
\left(1+\frac{\|(I-\Pch)T_h\Pc{1}{i-1}\|_{\L(\Vt)}}{\NEW{\beta}}\right)
\|(I-\Pch)\Ph{i}{j}\|^2_{\L(\Vt)},
\end{equation}
from which we deduce the following a posteriori estimate involving the
indicators introduced in~\eqref{eq:stimatore}
\begin{thm}
\label{th:stimasotto}
Let us assume the same hypotheses as in Theorem~\ref{th:main}
and Proposition~\ref{pr:knyazev}. Then, for 
$h$ small enough, we have
\[
\frac{\lambda_i-\lambda_{j,h}}{\lambda_i}\le C \NEW{\delta_h^2}(\Eh{i}{j},\Vc)
\le C \sum_{k=i}^j\frac1{\lambda^2_{k,h}}\mu_k^2.
\]
\end{thm}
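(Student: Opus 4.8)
The plan is to read off the first inequality from the already established bound~\eqref{eq:stimasotto}, which collects Theorem~\ref{th:main} and Proposition~\ref{pr:knyazev}, and to obtain the second inequality by a direct estimate of the gap $\delta_h(\Eh{i}{j},\Vc)$ through the conforming postprocessing $\tilde u_{k,h}$ of the discrete eigenfunctions. First I would identify the operator norm on the right-hand side of~\eqref{eq:stimasotto} with the squared gap. Since $\Vc\subset\Huo$, the forms $a$ and $a_h$ agree on $\Vc$, so the defining relation~\eqref{eq:defPch} of $\Pch$ amounts to $a_h(u-\Pch u,v)=0$ for all $v\in\Vc$; hence $\Pch$ is the $a_h$-orthogonal projection of $\Vt$ onto $\Vc$, $I-\Pch$ is the $a_h$-orthogonal projection onto the $a_h$-orthogonal complement of $\Vc$, and $\Ph{i}{j}$ is, by~\eqref{eq:proj}, the $a_h$-orthogonal projection onto $\Eh{i}{j}$. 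The characterization~\eqref{eq:LGproj} of the gap then gives $\|(I-\Pch)\Ph{i}{j}\|_{\L(\Vt)}=\delta_h(\Eh{i}{j},\Vc)$, so~\eqref{eq:stimasotto} already reads $(\lambda_i-\lambda_{j,h})/\lambda_i\le\bigl(1+\|(I-\Pch)T_h\Pc{1}{i-1}\|_{\L(\Vt)}/\beta\bigr)\,\delta_h^2(\Eh{i}{j},\Vc)$, and it remains to bound the prefactor by a constant independent of $h$.

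For that prefactor the numerator is harmless: $I-\Pch$ and $\Pc{1}{i-1}$ are orthogonal projections, hence of $\L(\Vt)$-norm at most $1$, while $T_h$ is bounded in $\L(\Vt)$ with a bound depending only on the Poincar\'e constant, since $\|T_hf\|_h^2=(f,T_hf)\le\|f\|_0\|T_hf\|_0\le C_\Omega^2\|f\|_h\|T_hf\|_h$. The point that forces the ``$h$ small enough'' qualifier is the positivity of the denominator $\beta=d-\delta$ appearing in the proof of Proposition~\ref{pr:knyazev}: by spectral convergence $d=|1/\lc_{i-1,h}-1/\lambda_{j,h}|\to(\lambda_i-\lambda_{i-1})/(\lambda_{i-1}\lambda_i)>0$, because $\lc_{i-1,h}\to\lambda_{i-1}$, $\lambda_{j,h}\to\lambda_i$ and $\lambda_{i-1}<\lambda_i$, whereas $\delta\le(\lambda_{i+q-1,h}-\lambda_{i,h})/(\lambda_{i,h}\lc_{i-1,h})\to0$ since all of $\lambda_{i,h},\dots,\lambda_{i+q-1,h}$ converge to $\lambda_i$; hence $\beta\ge\beta_0>0$ for $h$ small and the prefactor is at most $C:=1+C_\Omega^2/\beta_0$, which yields $(\lambda_i-\lambda_{j,h})/\lambda_i\le C\,\delta_h^2(\Eh{i}{j},\Vc)$.

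For the second inequality I would argue as in the proof of Lemma~\ref{le:gapHuo}, now over the $(j-i+1)$-dimensional space $\Eh{i}{j}$. Given $u\in\Eh{i}{j}$ with $\|u\|_h=1$, expand $u=\sum_{k=i}^{j}c_k u_{k,h}$ and take the competitor $v=\sum_{k=i}^{j}c_k\tilde u_{k,h}\in\Vc$; the triangle inequality together with $\|u_{k,h}-\tilde u_{k,h}\|_h=\mu_k$ gives $\inf_{w\in\Vc}\|u-w\|_h\le\|u-v\|_h\le\sum_{k=i}^{j}|c_k|\mu_k$, and a Cauchy--Schwarz step, in which the $a_h$-orthogonality relations~\eqref{eq:ortoh} tie the size of the coefficients $c_k$ to the normalization $\|u\|_h=1$, turns the right-hand side, after the supremum over $u$, into the weighted sum $\sum_{k=i}^{j}\lambda_{k,h}^{-2}\mu_k^2$; this is the asserted bound $\delta_h^2(\Eh{i}{j},\Vc)\le\sum_{k=i}^{j}\lambda_{k,h}^{-2}\mu_k^2$.

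I expect the projection bookkeeping and the Cauchy--Schwarz estimate to be routine; the genuinely delicate step, and the one responsible for the ``for $h$ small enough'' hypothesis, is showing that $\beta$ stays bounded away from zero, which rests on the spectral convergence of the Crouzeix--Raviart eigenvalues $\lambda_{i,h},\dots,\lambda_{i+q-1,h}$ to $\lambda_i$ and of the conforming eigenvalue $\lc_{i-1,h}$ to $\lambda_{i-1}$, together with the strict separation $\lambda_{i-1}<\lambda_i$.
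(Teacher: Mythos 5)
Your proof is correct and follows essentially the same route as the paper: the first inequality is read off from~\eqref{eq:stimasotto} after identifying $\|(I-\Pch)\Ph{i}{j}\|_{\L(\Vt)}$ with the gap via~\eqref{eq:LGproj} and checking that the prefactor stays bounded (your argument that $\beta\ge\beta_0>0$ for small $h$ is the careful version of the paper's one-line remark that the quotient in parentheses tends to zero, hence is bounded). The only divergence is in the second inequality: the paper invokes the sub-additivity of squared gaps over the $a_h$-orthogonal decomposition of $\Eh{i}{j}$ (citing Knyazev, Cor.~2.2) and then applies Lemma~\ref{le:gapHuo} term by term, whereas you prove the same estimate directly with the explicit competitor $\sum_k c_k\tilde u_{k,h}$ and Cauchy--Schwarz; these are equivalent, and your version is self-contained. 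One small imprecision: with the normalization $\sum_k c_k^2\lambda_{k,h}=1$ your Cauchy--Schwarz step naturally yields the weight $\lambda_{k,h}^{-1}$ rather than the asserted $\lambda_{k,h}^{-2}$, but since the $\lambda_{k,h}$ converge to $\lambda_i>0$ the discrepancy is absorbed into the generic constant $C$, so the stated bound still follows.
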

\begin{proof}
The quotient within the parentheses in~\eqref{eq:stimasotto} tends to zero as $h$
tends to zero, hence it is bounded. On the other hand, from~\eqref{eq:LGproj}
we have that
\[
\|(I-\Pch)\Ph{i}{j}\|^2_{\L(\Vt)}=\NEW{\delta_h^2}(\Eh{i}{j},\Vc).
\]
Thanks to~\eqref{eq:ortoh}, $u_{k,h}$ for $k=i,\dots,j$ form an orthogonal
basis for $\Eh{i}{j}$, so that (see, e.g.~\cite[Cor.~2.2]{knyazev})
\[
\NEW{\delta_h^2}(\Eh{i}{j},\Vc)\le\sum_{k=i}^j\NEW{\delta_h^2}(E_{k,h},\Vc).
\]
Applying Lemma~\ref{le:gapHuo} we arrive at the desired estimate.
\end{proof}
Let us now consider the case of discrete nonconforming eigenvalues
approximating the continuous ones from above. We estimate first the distance
between an eigenvalue $\lambda_i$ of multiplicity $q$ and the average of
the discrete eigenvalues $\lambda_{k,h}$ for $k=i,\dots,j$ (here
$j=i,\dots,i+q-1$).
\begin{lemma}
\label{le:main2}
Let $\lambda_i$ be an eigenvalue with multiplicity $q$, so that
\[
\lambda_{i-1}<\lambda_i=\dots=\lambda_{i+q-1}<\lambda_{i+q},
\]
and let $\lambda_{i,h}\le\dots\le\lambda_{i+q-1,h}$ be $q$ discrete
eigenvalues converging to $\lambda_i$.
We assume that $\lambda_{j,h}\ge\lambda_i$ for $j=i,\dots,i+q-1$, then
for all $j=i,\dots,i+q-1$
\begin{equation}
\label{eq:stimamedia}
\frac1{J}\sum_{k=i}^{j}\lambda_{k,h}-\lambda_i\le
\frac1{J}\sum_{k=i}^{j}\left(6\mu_k^2+4C_1\eta_k^2+4|h.o.t.|^2\right)
+|h.o.t.|_2
\end{equation}
where
\[
|h.o.t.|_2=\frac{C}{J}\sum_{k=i}^{j}\sum_{m=1}^{k-i}h^{2m}\sum_{l=i}^{k-m}
C(\mu_k+\mu_l)^2\lambda_{l,h},
\]
$J=j-i+1$ and the higher order terms $|h.o.t.|$ are defined in
Th.~\ref{th:errfunct}.
\end{lemma}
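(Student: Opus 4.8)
The plan is to reduce \eqref{eq:stimamedia} to a single--eigenvalue estimate, valid for each $k\in\{i,\dots,j\}$, and then to average over $k$. The basic tool is the Crouzeix--Raviart orthogonality: for every $w\in\Huo$ one has $a_h(u_{k,h},w)=a_h(u_{k,h},w_I)=\lambda_{k,h}(u_{k,h},w_I)$, because $\nh u_{k,h}$ is piecewise constant, $\nh w_I$ is the $L^2$--projection of $\nabla w$ onto piecewise constants, and $w_I\in\Vh$. Fix $k$, let $u_k(h)\in\E{i}{i+q-1}$ with $\|u_k(h)\|_0=1$ realise the infimum in \eqref{eq:ujh}, and set $e_k=u_{k,h}-u_k(h)$. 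Using that $u_k(h)$ solves \eqref{eq:var} with eigenvalue $\lambda_i$, that $\|u_{k,h}\|_0=\|u_k(h)\|_0=1$ (hence $(u_{k,h},u_k(h))=1-\tfrac12\|e_k\|_0^2$), and the identity above with $w=u_k(h)$, the expansion of $\lambda_{k,h}=a_h(u_{k,h},u_{k,h})$ about $u_k(h)$ gives, after a short computation, the exact identity
\[
\lambda_{k,h}-\lambda_i=\lambda_{k,h}\|e_k\|_0^2-\|e_k\|_h^2-2\lambda_{k,h}\bigl(u_{k,h},\,u_k(h)_I-u_k(h)\bigr).
\]
Since $\lambda_{k,h}\ge\lambda_i$ by hypothesis, the nonpositive summand $-\|e_k\|_h^2$ may be discarded or used to absorb a cross term, and only $\lambda_{k,h}\|e_k\|_0^2$ and the last summand remain to be estimated.

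For the last summand, exploit that $\tilde u_{k,h}\in\Vc\subset\Huo$ is itself a Crouzeix--Raviart function, so $(\tilde u_{k,h})_I=\tilde u_{k,h}$ and therefore $u_k(h)_I-u_k(h)=\bigl(u_k(h)-\tilde u_{k,h}\bigr)_I-\bigl(u_k(h)-\tilde u_{k,h}\bigr)$. The second bound in \eqref{eq:PropI} applied elementwise, the identity $\bigl(\sum_K h_K^2\|u_{k,h}\|_{L^2(K)}^2\bigr)^{1/2}=\eta_k/\lambda_{k,h}$, and a Cauchy--Schwarz over the elements give
\[
\lambda_{k,h}\,\bigl|\bigl(u_{k,h},u_k(h)_I-u_k(h)\bigr)\bigr|\le C_1\,\eta_k\,\|\nh\bigl(u_k(h)-\tilde u_{k,h}\bigr)\|_0\le C_1\,\eta_k\bigl(\mu_k+\|e_k\|_h\bigr).
\]
Inserting \eqref{eq:ujh2} — which, after bounding the conforming infimum there by $\|\tilde u_{k,h}-u_{k,h}\|_h=\mu_k$, gives $\|e_k\|_h\le\mu_k+C_1\eta_k+|h.o.t.|$ — expanding $\|e_k\|_h^2$ the same way, splitting the mixed products by Young's inequality and letting $-\|e_k\|_h^2$ absorb $2C_1\eta_k\|e_k\|_h$, one obtains, after collecting constants, the main part $6\mu_k^2+4C_1\eta_k^2+4|h.o.t.|^2$ of the per--$k$ bound.

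The delicate point is the term $\lambda_{k,h}\|e_k\|_0^2$, which must be shown to carry an extra power of $h$. Splitting $e_k=(u_{k,h}-\tilde u_{k,h})+(\tilde u_{k,h}-u_k(h))$, the first piece is harmless by the third bound in \eqref{eq:stimetildev}, $\lambda_{k,h}\|u_{k,h}-\tilde u_{k,h}\|_0^2\le Ch^2\lambda_{k,h}\mu_k^2$; for $\|\tilde u_{k,h}-u_k(h)\|_0$ I run an Aubin--Nitsche duality argument (testing against $g\in\Ld$, solving the auxiliary Dirichlet problem with datum $g$, and using the eigenfunction equation for $u_k(h)$), which bounds the $L^2$ norm by $h/\sqrt{\lambda_{k,h}}$ times energy--type quantities already controlled by $\mu_k,\eta_k,|h.o.t.|$, plus a term measuring the failure of $L^2$--orthogonality between $u_{k,h}$ and the other approximating eigenfunctions $u_l(h)$ of the $q$--dimensional eigenspace, $i\le l<k$, and hence coupling to the $L^2$--errors of the lower cluster indices. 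Re--expanding each such error by the same splitting and iterating, the recursion closes after at most $k-i\le q-1$ steps, each step contributing a factor $h^2$ and the resulting depth--$m$ term carrying a factor $\lambda_{l,h}$ and the indicator combination $(\mu_k+\mu_l)^2$; this reproduces exactly
\[
|h.o.t.|_2=\frac CJ\sum_{k=i}^{j}\sum_{m=1}^{k-i}h^{2m}\sum_{l=i}^{k-m}C(\mu_k+\mu_l)^2\lambda_{l,h}.
\]
Setting up this intra--cluster bootstrap so that it closes and tracking the constants along it is the step I expect to be the main obstacle; the identity and the cross--term estimate are otherwise routine. Once the per--$k$ inequality $\lambda_{k,h}-\lambda_i\le 6\mu_k^2+4C_1\eta_k^2+4|h.o.t.|^2+C\sum_{m=1}^{k-i}h^{2m}\sum_{l=i}^{k-m}C(\mu_k+\mu_l)^2\lambda_{l,h}$ is established, summing over $k=i,\dots,j$ (all summands nonnegative since $\lambda_{k,h}\ge\lambda_i$), dividing by $J=j-i+1$ and using $\tfrac1J\sum_{k=i}^{j}\lambda_i=\lambda_i$ yields \eqref{eq:stimamedia}.
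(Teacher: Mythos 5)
Your opening identity is correct and is the classical Armentano--Dur\'an starting point: with $e_k=u_{k,h}-u_k(h)$ and the Crouzeix--Raviart orthogonality $a_h(u_{k,h},w)=\lambda_{k,h}(u_{k,h},w_I)$ one does get
$\lambda_{k,h}-\lambda_i=\lambda_{k,h}\|e_k\|_0^2-\|e_k\|_h^2-2\lambda_{k,h}(u_{k,h},u_k(h)_I-u_k(h))$,
and your bound on the consistency term via $\eta_k$ and $\mu_k$ is sound. But this is not the paper's route, and the part of your argument that would have to do the real work is missing. The term $\lambda_{k,h}\|e_k\|_0^2$ cannot be absorbed into $-\|e_k\|_h^2$ (the constant in any Poincar\'e-type bound is not small), and your proposed treatment --- an Aubin--Nitsche duality for $\|\tilde u_{k,h}-u_k(h)\|_0$ whose ``failure of $L^2$-orthogonality'' terms are then ``re-expanded and iterated'' until they ``reproduce exactly'' $|h.o.t.|_2$ --- is an assertion reverse-engineered from the statement, not a derivation. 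You never set up the dual problem, never explain why a per-$k$ identity should couple to the lower cluster indices $l<k$ at all, and never show why each iteration step gains a factor $h^2$ together with the specific weight $(\mu_k+\mu_l)^2\lambda_{l,h}$. As written, the ``delicate point'' you flag is precisely the point that is not proved, and if you could prove the per-$k$ inequality you claim, you would have a strictly stronger result than the lemma (which only controls the average over the cluster) --- a further sign that something is off.

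The paper's mechanism is different and explains where every term in \eqref{eq:stimamedia} actually comes from. It never touches $\|e_k\|_0$: instead it uses $\lambda_{k,h}\le\lc_{k,h}$ together with a trace min--max principle, namely $\sum_{k=i}^{j}\lc_{k,h}\le\sum_{l=1}^{J}\rayl(v_l)$ for any $L^2$-orthogonal conforming $v_1,\dots,v_J$ in $(\Ech{1}{i-1})^\perp$, adds $J\lambda_i=\sum_k\|\nabla u_k(h)\|_0^2$, and expands the squares to reduce everything to $\sum_k\|\nabla(u_k(h)-v_{k-i+1})\|_0^2$ (the term $-\lambda_i\|u_k(h)-v_{k-i+1}\|_0^2$ is simply dropped because it has the right sign). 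This is why the lemma is stated for the \emph{average}: only the sum is controlled by the min--max. The functions $v_l$ are built by Gram--Schmidt from the projections $\hat u_{k,h}$ of $u_{k,h}$ onto $(\Ech{1}{i-1})^\perp$; the main terms $6\mu_k^2+4C_1\eta_k^2+4|h.o.t.|^2$ come from \eqref{eq:ujh2} and from $\|\nh(u_{k,h}-\hat u_{k,h})\|_0\le C\mu_k$ (which needs Proposition~\ref{pr:knyazev}), while $|h.o.t.|_2$, with its $h^{2m}$ powers and triple sum, is exactly the accumulated Gram--Schmidt cross terms, each controlled by the duality bound $\|u_{k,h}-\hat u_{k,h}\|_0\le Ch\mu_k$. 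So the structure you tried to conjure from an intra-cluster bootstrap is, in the actual proof, a bookkeeping of orthogonalization corrections among conforming projections. To salvage your approach you would have to supply the duality argument in full and show it closes; as submitted, the proof has a genuine gap at its central step.
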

\begin{proof}
The proof is divided into two parts. We start by estimating the error for the
first discrete eigenvalue $\lambda_{i,h}$ converging to $\lambda_i$, next we
shall deal with the general case.

{\bf First case.} Let $\lambda_i$ with $i\ge1$ be a multiple eigenvalue with
multiplicity $q\ge1$.

It holds that $\lambda_i\le\lambda_{i,h}\le\lc_{i,h}$.
The first inequality holds by assumption and the second one is due to the
min-max principle for the eigenvalues and the fact that $\Vc\subset\Vh$.
Let $u_{i,h}\in V_h$ be the eigensolution associated with $\lambda_{i,h}$
and $u_i(h)\in E_{i,\dots,i+q-1}$
be such that $\|u_i(h)\|_0=1$ and satisfies~\eqref{eq:ujh} and~\eqref{eq:ujh2},
then, for all $v\in(\Pch-\Pc{1}{i-1})E_{i,h}$ with $\|v\|_0=1$, we have
\[
\lambda_i+\lambda_{i,h}\le\|\nabla u_i(h)\|^2_0+\rayl(v)=
\|\nabla u_i(h)\|^2_0+\|\nabla v\|^2_0.
\]
Hence
\[
\aligned
\lambda_i+\lambda_{i,h}&\le\|\nabla u_i(h)\|^2_0+\|\nabla v\|^2_0\\
&=\|\nabla(u_i(h)-v)\|^2_0+2\int_\Omega\nabla u_i(h)\nabla v\,dx\\
&=\|\nabla(u_i(h)-v)\|^2_0+2\lambda_i\int_\Omega u_i(h) v\,dx\\
&=\|\nabla(u_i(h)-v)\|^2_0-\lambda_i\|u_i(h)-v\|^2_0+2\lambda_i
\endaligned
\]
and
\[
\lambda_{i,h}-\lambda_i\le\|\nabla(u_i(h)-v)\|^2_0\le
2\|\nabla_h(u_i(h)-u_{i,h})\|^2_0+2\|\nabla_h(u_{i,h}-v)\|^2_0.
\]
The first term on the right hand side can be estimated with~\eqref{eq:ujh2}.
Since $v$ is arbitrary in $(\Pch-\Pc{1}{i-1})E_{i,h}$,
we can set
\NEW{$v=(\Pch-\Pc{1}{i-1})u_{i,h}$. Hence
\[
\aligned
\|\nabla_h(u_{i,h}-v)\|_0&=\|\nabla_h(u_{i,h}-(\Pch-\Pc{1}{i-1})u_{i,h})\|_0\\
&\le\|\nabla_h(I-\Pch)u_{i,h}\|_0+\|\nabla_h\Pc{1}{i-1}u_{i,h}\|_0.
\endaligned
\]
The hypotheses of Proposition~\ref{pr:knyazev} are satisfied, so that
\[
\|\nabla_h(u_{i,h}-v)\|_0\le C\|\nabla_h(I-\Pch)u_{i,h}\|_0\le C\mu_i.
\]
Finally,} we obtain 
\[
\lambda_{i,h}-\lambda_i\le4\mu_i^2+C_1\eta_i^2+|h.o.t.|^2.
\]
{\bf Second case.} 
Due to the min-max characterization for the eigenvalues we have
that for $k=i,\dots,j$
\[
\lambda_i\le\lambda_{k,h}\le\lc_{k,h}.
\]

Let us take $J$ arbitrary elements $v_1,\dots,v_J$ of $\Vc$ belonging to
$(E_{1,\dots,i-1,h}^c)^\perp$ \NEW{(to be specified later in the proof)}
which are orthogonal to each
other, that is $(v_l,v_m)=0$ for $l\ne m$.

We show that
\begin{equation}
\sum_{k=i}^{j}\lc_{k,h}\le\sum_{l=1}^{J}\rayl(v_l),
\label{eq:bella}
\end{equation}
where $\rayl(v_l)$ denotes the Rayleigh quotient (see~\eqref{eq:rayl}).
For simplicity, we show the result under the normalization
$\|v_l\|_0=1$ ($l=1,\dots,J$), so that $\rayl(v_l)=a(v_l,v_l)$ (this does not
limit the generality of our proof).
Let us write $v_l$ \NEW{by decomposing it as its components in
$E^c_{i,\dots,j,h}$ and a remainder as follows:}
\[
v_l=\sum_{k=i}^{j}\alpha_{lk}\uc_{k,h}+w_l.
\]
It turns out that
\[
\rayl(v_l)=a(v_l,v_l)\ge\sum_{k=i}^{j}\alpha_{lk}^2\lc_{k,h}+
\left(1-\sum_{k=i}^{j}\alpha_{lk}^2\right)\lc_{j+1,h},
\]
so that
\[
\sum_{l=1}^J\rayl(v_l)\ge\sum_{k=i}^{j}\lc_{k,h}+
\sum_{k=i}^{j}(\lc_{j+1,h}-\lc_{k,h})
\left(1-\sum_{l=1}^J\alpha_{lk}^2\right).
\]
Hence, the estimate~\eqref{eq:bella} is proved if we can show that
\[
\sum_{l=1}^J\alpha_{lk}^2\le1\quad\forall k.
\]
This follows from the orthogonality of the $v_l$'s. Indeed, for all $k$,
\[
\sum_{l=1}^J\alpha_{lk}^2
=\Big(\sum_{l=1}^J\alpha_{lk}v_l,\uc_{k,h}\Big)\le
\Big\Vert\sum_{l=1}^J\alpha_{lk}v_l\Big\Vert_0=
\sqrt{\sum_{l=1}^J\alpha_{lk}^2}.
\]
As in the proof of the first case, for $k=i,\dots,j$,
let $u_k(h)\in E_{i,\dots,i+q-1}$ with $\|u_k(h)\|_0=1$
be such that~\eqref{eq:ujh2} holds true. Then we have
\[
\aligned
J\lambda_i+\sum_{k=i}^{j}\lambda_{k,h}&\le
\sum_{k=i}^{j}\left(\|\nabla u_k(h)\|^2_0+\|\nabla v_{k-i+1}\|^2_0\right)\\
&=\sum_{k=i}^{j}
\left(\|\nabla(u_k(h)-v_{k-i+1})\|^2_0-\lambda_i\|u_k(h)-v_{k-i+1}\|^2_0\right)
+2J\lambda_i
\endaligned
\]
Hence
\begin{equation}
\label{eq:stima3}
\aligned
\frac1J\sum_{k=i}^{j}\lambda_{k,h}-\lambda_i&\le
\frac1J\sum_{k=i}^{j}\|\nabla(u_k(h)-v_{k-i+1})\|^2_0\\
&\le\frac2J\sum_{k=i}^{j}
\left(\|\nabla_h(u_k(h)-u_{k,h})\|^2_0+\|\nabla_h(u_{k,h}-v_{k-i+1})\|^2_0\right)
\endaligned
\end{equation}

The first term in the sum appearing in the last line of~\eqref{eq:stima3} can
be bounded by~\eqref{eq:ujh2}, while for the remaining term we \NEW{now make a
choice for the definition of
$v_l$, $l=1,\dots,J$, which will be analogous to what has been done in the
first case.}

For $k=i,\dots,j$ let \NEW{$\hat u_{k,h}\in\Vc$ be the projection of
$u_{k,h}$ onto $(\Ech{1}{i-1})^\perp$ performed according to~\eqref{eq:defPch}.}

Then we choose
\begin{equation}
\label{eq:defv}
\aligned
&v_1=\NEW{\hat u_{i,h}}\\
&v_l=\NEW{\hat u_{i+l-1,h}}-
\sum_{m=1}^{l-1}\frac{(\NEW{\hat u_{i+l-1,h}},v_m)}{\|v_m\|^2_0}v_m,
\qquad l=2,\dots,J.
\endaligned
\end{equation}
By construction, we have that for $h$ small enough
\begin{equation}
\label{eq:stimav}
\frac12\le\|v_l\|_0\le C.
\end{equation}
\NEW{The same argument used in the first case shows that
\begin{equation}
\label{eq:stimav1}
\|\nabla_h(u_{k,h}-\hat u_{k,h})\|^2_0\le C\mu^2_k,\quad\forall k=i,\dots,j.
\end{equation}
Moreover, standard properties of the projections and a duality argument give
\begin{equation}
\|u_{k,h}-\hat u_{k,h}\|_0\le Ch\mu_k,\quad\forall k=i,\dots,j.
\label{eq:dualproj}
\end{equation}
}%
\NEW{In order to estimate the right hand side of~\eqref{eq:stima3}, we start
with $j=i$; we have
\begin{equation}
\label{eq:nv1}
\|\nabla v_1\|^2_0=\|\nabla\hat u_{i,h}\|^2_0\le\|\nh u_{i,h}\|^2_0=
\lambda_{i,h}.
\end{equation}
}%
For $k=i+1,\dots,j$, set $l=k-i+1$, so that we have
\begin{equation}
\label{eq:stimav2}
\|\nabla_h(u_{k,h}-v_l)\|^2_0\le\|\nabla_h(u_{k,h}-\NEW{\hat u_{k,h}})\|^2_0
+\sum_{m=1}^{l-1}\frac{|(\NEW{\hat u_{k,h}},v_m)|^2}{\|v_m\|_0^4}
\|\nabla v_m\|_0^2.
\end{equation}
We detail the cases when $k=i+1$ and $k=i+2$; the general
situation should then be clear.

If $k=i+1$, then $l=2$, so that the last term in~\eqref{eq:stimav2}
can be estimated using~\eqref{eq:stimav} \NEW{and~\eqref{eq:dualproj}} as follows:
\begin{equation}
\aligned
|(\NEW{\hat u_{i+1,h}},v_1)|&\le|(\NEW{\hat u_{i+1,h}}-u_{i+1,h},v_1)|
+|(u_{i+1,h},v_1-u_{i,h})|\\
&\le \|\NEW{\hat u_{i+1,h}}-u_{i+1,h}\|_0\|v_1\|_0
+\|u_{i+1,h}\|_0\|\NEW{\hat u_{i,h}}-u_{i,h}\|_0\\
&\le Ch(\mu_{i+1}+\mu_i).
\endaligned
\label{eq:stimal2}
\end{equation}
Hence we obtain
\[
\aligned
\|\nabla_h(u_{i+1,h}-v_2)\|^2_0&\le\|\nabla_h(u_{i+1,h}-\NEW{\hat u_{i+1,h}})\|^2_0
+Ch^2(\mu_{i+1}^2+\mu_i^2)\lambda_{i,h}\\
&\NEW{\,\le C}\mu_{i+1}^2+Ch^2(\mu_{i+1}^2+\mu_i^2)\lambda_{i,h}
\endaligned
\]
and
\begin{equation}
\label{eq:nv2}
\|\nabla v_2\|^2_0\le2\|\nabla \NEW{\hat u_{i+1,h}}\|^2_0+2
\frac{|(\NEW{\hat u_{i+1,h}},v_1)|^2}{\|v_1\|_0^4}\|\nabla v_1\|^2_0\le
C\lambda_{i+1,h}+Ch^2(\mu_{i+1}+\mu_i)^2\lambda_{i,h}.
\end{equation}
If $k=i+2$, then $l=3$ and we have two terms in the sum on the right hand side
of~\eqref{eq:stimav2}, that is
$(\NEW{\hat u_{i+2,h}},v_1)$ and $(\NEW{\hat u_{i+2,h}},v_2)$.
For the first term, working as in~\eqref{eq:stimal2}, we obtain
\begin{equation}
\aligned
|(\NEW{\hat u_{i+2,h}},v_1)|&\le C
\|\NEW{\hat u_{i+2,h}}-u_{i+2,h}\|_0\|v_1\|_0
+\|u_{i+2,h}\|_0\|\NEW{\hat u_{i,h}}-u_{i,h}\|_0\\
&\le C h(\mu_{i+2}+\mu_i).
\endaligned
\label{eq:stimal3}
\end{equation}
Next, from the definition of $v_1$ and $v_2$ we have using
also~\eqref{eq:stimal2}
\begin{equation}
\label{eq:stimal4}
\aligned
|(\NEW{\hat u_{i+2,h}},v_2)|&\le|(\NEW{\hat u_{i+2,h}}-u_{i+2,h},v_2)|+
|(u_{i+2,h},\NEW{\hat u_{i+1,h}}-u_{i+1,h})|\\
&\quad +\frac{|(\NEW{\hat u_{i+1,h}},v_1)|}{\|v_1\|^2_0}
|(u_{i+2,h},\NEW{\hat u_{i,h}}-u_{i,h})|\\
&\le Ch(\mu_{i+2}+\mu_{i+1})+Ch^2(\mu_{i+1}+\mu_i)\mu_i.
\endaligned
\end{equation}
Therefore, putting things together, we get
\[
\|\nh(u_{i+2,h}-v_3)\|^2_0\le \mu_{i+2}^2
+Ch^2\sum_{l=i}^{i+1}(\mu_{i+2}+\mu_l)^2\lambda_{l,h}
+Ch^4(\mu_{i+1}+\mu_i)^2\mu_i^2\lambda_{i+1,h}.
\]
From this estimate it is straightforward to obtain the estimate of
$\|\nabla v_3\|_0$, and so on.
\end{proof}
It is now easy to obtain the desired a posteriori error estimate for the
eigenvalues approaching the continuous one from above.
\begin{thm}
\label{th:stimasopra}
Under the same hypotheses as in Lemma~\ref{le:main2} the following bound holds
true for $j=i,\dots,i+q-1$
\[
\frac{\lambda_{j,h}-\lambda_i}{\lambda_i}\le\frac1{\lambda_i}\left(
\frac1{J}\sum_{k=i}^{j}\left(6\mu_k^2+4C_1\eta_k^2+4|h.o.t.|^2\right)
+|h.o.t.|_2
+\frac1{J}\sum_{k=i}^{j-1}(\lambda_{j,h}-\lambda_{k,h})\right).
\]
\end{thm}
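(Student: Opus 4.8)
The plan is to deduce the statement directly from Lemma~\ref{le:main2} by a single algebraic manipulation that isolates the individual index $j$ from the averaged quantity appearing in~\eqref{eq:stimamedia}. First I would write the trivial identity
\[
\lambda_{j,h}-\lambda_i=\left(\lambda_{j,h}-\frac1J\sum_{k=i}^{j}\lambda_{k,h}\right)+\left(\frac1J\sum_{k=i}^{j}\lambda_{k,h}-\lambda_i\right),
\]
which is legitimate precisely because $J=j-i+1$ equals the number of terms in each sum. The second parenthesis is exactly the quantity bounded in~\eqref{eq:stimamedia}, so it is at most $\frac1J\sum_{k=i}^{j}\left(6\mu_k^2+4C_1\eta_k^2+4|h.o.t.|^2\right)+|h.o.t.|_2$.

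For the first parenthesis I would observe that
\[
\lambda_{j,h}-\frac1J\sum_{k=i}^{j}\lambda_{k,h}=\frac1J\sum_{k=i}^{j}(\lambda_{j,h}-\lambda_{k,h})=\frac1J\sum_{k=i}^{j-1}(\lambda_{j,h}-\lambda_{k,h}),
\]
the last equality because the term $k=j$ contributes zero. Adding the two bounds and dividing through by $\lambda_i$ produces exactly the claimed inequality; no new estimate is required, and for $j=i$ one recovers the ``first case'' bound already obtained inside the proof of Lemma~\ref{le:main2}.

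The only point that deserves a word of care — and hence the main, rather mild, obstacle — is bookkeeping about the hypotheses. Since Lemma~\ref{le:main2} assumes $\lambda_{j,h}\ge\lambda_i$ for $j=i,\dots,i+q-1$ and the discrete eigenvalues are ordered as $\lambda_{i,h}\le\dots\le\lambda_{i+q-1,h}$, the terms $\lambda_{j,h}-\lambda_{k,h}$ for $k\le j-1$ are nonnegative, so the right-hand side is a genuine upper bound for the nonnegative relative error $(\lambda_{j,h}-\lambda_i)/\lambda_i$. One should also recall, exactly as in the proof of Lemma~\ref{le:main2}, that the auxiliary hypothesis $\lc_{i-1,h}<\lambda_{i,h}$ needed to invoke Proposition~\ref{pr:knyazev} holds for $h$ small enough, which is what makes the constants $C$ hidden in $|h.o.t.|_2$ and in the $6\mu_k^2$, $4C_1\eta_k^2$ terms independent of $h$. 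Apart from this, the argument is immediate.
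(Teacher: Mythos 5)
Your proposal is correct and coincides with the paper's own proof: the authors use exactly the same identity $\lambda_{j,h}-\lambda_i=\frac1{J}\sum_{k=i}^{j-1}(\lambda_{j,h}-\lambda_{k,h})+\frac1{J}\sum_{k=i}^j\lambda_{k,h}-\lambda_i$ and then invoke~\eqref{eq:stimamedia}. Your additional remarks on the sign of the extra terms and on the hypotheses inherited from Lemma~\ref{le:main2} are accurate but not needed for the argument.
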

\begin{proof}
The proof is straightforward since
\[
\lambda_{j,h}-\lambda_i=
\frac1{J}\sum_{k=i}^{j-1}\left(\lambda_{j,h}-\lambda_{k,h}\right)
+\frac1{J}\sum_{k=i}^j\lambda_{k,h}-\lambda_i,
\]
which combined with~\eqref{eq:stimamedia} gives the result.

\end{proof}

\section{Numerical results}

In this section we present some numerical results obtained applying the
estimator discussed in this paper.

The aim of our tests is not to show the reliability and the efficiency of
the error indicator, since results in this direction have been already
reported in~\cite{DDP}. The emphasis of our computations will be put on the
approximation of multiple eigenvalues. There are few papers concerning
adaptivity for the numerical approximation of multiple eigenvalues, among
those we recall in particular~\cite{SoGi},~\cite{zhou}, and~\cite{Gallistl}.
A consequence
of the results contained in these \NEW{three} papers is, in particular, that an
adaptive algorithm aiming at a refinement procedure for the approximation of a
multiple eigenvalue, needs to take into account indicators belonging to all
the discrete eigenfunctions approximating the continuous eigenspace.

In all our numerical tests the discrete eigenvalues are lower bounds for the
continuous ones.
\NEW{Actually, we checked the first one hundred eigenvalues of our numerical
tests and all of them appear to be lower bounds for the corresponding exact
frequencies. Our experience, is that the method generally provides eigenvalues
converging from below (apart from very pathological examples with few
elements). So far, a proof of this fact is still missing.}

\NEW{We} focus our attention on eigenvalues of multiplicity
two.
We consider an adaptive algorithm based on standard D\"orfler marking
strategy~\cite{Willy}. In case of double eigenvalues, we choose either to
mark elements according to the error indicator based on one of the two
discrete corresponding eigenfunctions, or to the sum of the indicators of the
two eigenfunctions.
From the discussion of the results we are going to present, it will be clear
that the results of~\cite{SoGi},~\cite{zhou}, and~\cite{Gallistl} are
confirmed. A more comprehensive and detailed study of the numerical tests
will be included in a forthcoming paper.

%
%
%

\subsection{The square ring domain}

Let $\Omega$ be the domain obtaining by subtracting the square $(1/3,2/3)^2$
from the square $(0,1)^2$. This is the same domain considered
in~\cite[Sect.~2]{SoGi}.

It turns out that $\lambda_2$ is an eigenvalue with multiplicity two. More
precisely, the two-dimensional eigenspace corresponding to
$\lambda_2=\lambda_3$ can be generated by two singular
eigenfunctions: each of
them has a singularity at two opposite reentrant corners of $\Omega$. \NEW{We
used the reference value $\lambda_2=\lambda_3=84.517\dots$ computed by Aitken
extrapolation using very fine meshes and conforming linear elements.}

\NEW{We use Matlab for our tests, so that the eigenvalues/eigenvectors
are computed with the command \texttt{eigs}. At each stage we compute the
three smallest eigenvalues and take into account the second and third ones.}

We start by using a refinement strategy based on the discrete eigenfunction
$u_{2,h}$ (notice that, due to the fact that the discrete eigenvalues are
lower bounds, this corresponds to the \NEW{one which is farther away from the
double eigenvalue we want to approximate}).
The initial non-structured mesh is shown in Figure~\ref{fg:ring_irre_mesh1}.
\begin{figure}
\includegraphics[width=8cm]{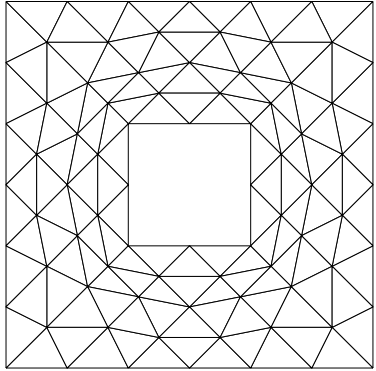}
\caption{The initial mesh for the square ring test case}
\label{fg:ring_irre_mesh1}
\end{figure}
The plot of the discrete eigenvalues is reported in
Figure~\ref{fg:ring_irre_eig}.

\begin{figure}
\includegraphics[width=8cm]{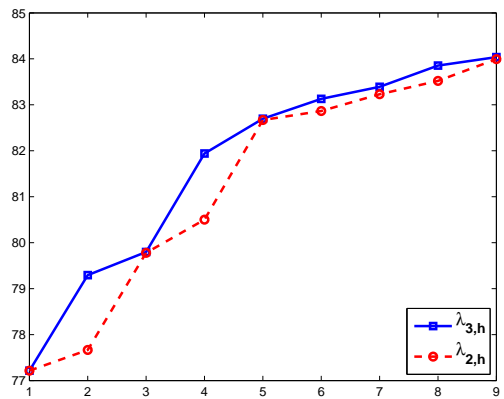}
\caption{The values of the discrete eigenvalues $\lambda_{2,h}$ and
$\lambda_{3,h}$ on the square ring for different refinement levels (non
structured initial mesh, refined based on $u_{2,h}$)}
\label{fg:ring_irre_eig}
\end{figure}

It is clear that the phenomenon already explained in~\cite{SoGi} is present.
The two discrete eigenvalues apparently change their position as the mesh is
refined. For example, on the first mesh the two values are equal since the
mesh is symmetric.
\NEW{It turns out that the solver determines a decomposition of the discrete
eigenspace: let's call $u_{j,h_1}$ ($j=2,3$) the two eigenfunctions on the
initial mesh. The first
refinement step is then performed by using the
eigenfunction $u_{2,h_1}$; it happens that on the second mesh $\lambda_{2,h_2}$ and
$\lambda_{3,h_2}$ are separated and correspond to new eigenfunctions
$u_{2,h_2}$ and $u_{3,h_2}$. Actually, due to the performed refinement, this
can be interpreted as if $u_{3,h_2}$ was an improved approximation of
$u_{2,h_1}$; in some sense, the position of the two eigenfunctions has exchanged
after the first refinement step. This phenomenon is more clear by looking at
the corresponding meshes (see Figure~\ref{fg:ring_irre_fun}): the second
mesh has been refined at the North-West and South-East corners, while the new
second eigenfunction $u_{2,h_2}$ is singular at the other two opposite corners.
}

\begin{figure}
\includegraphics[width=6cm]{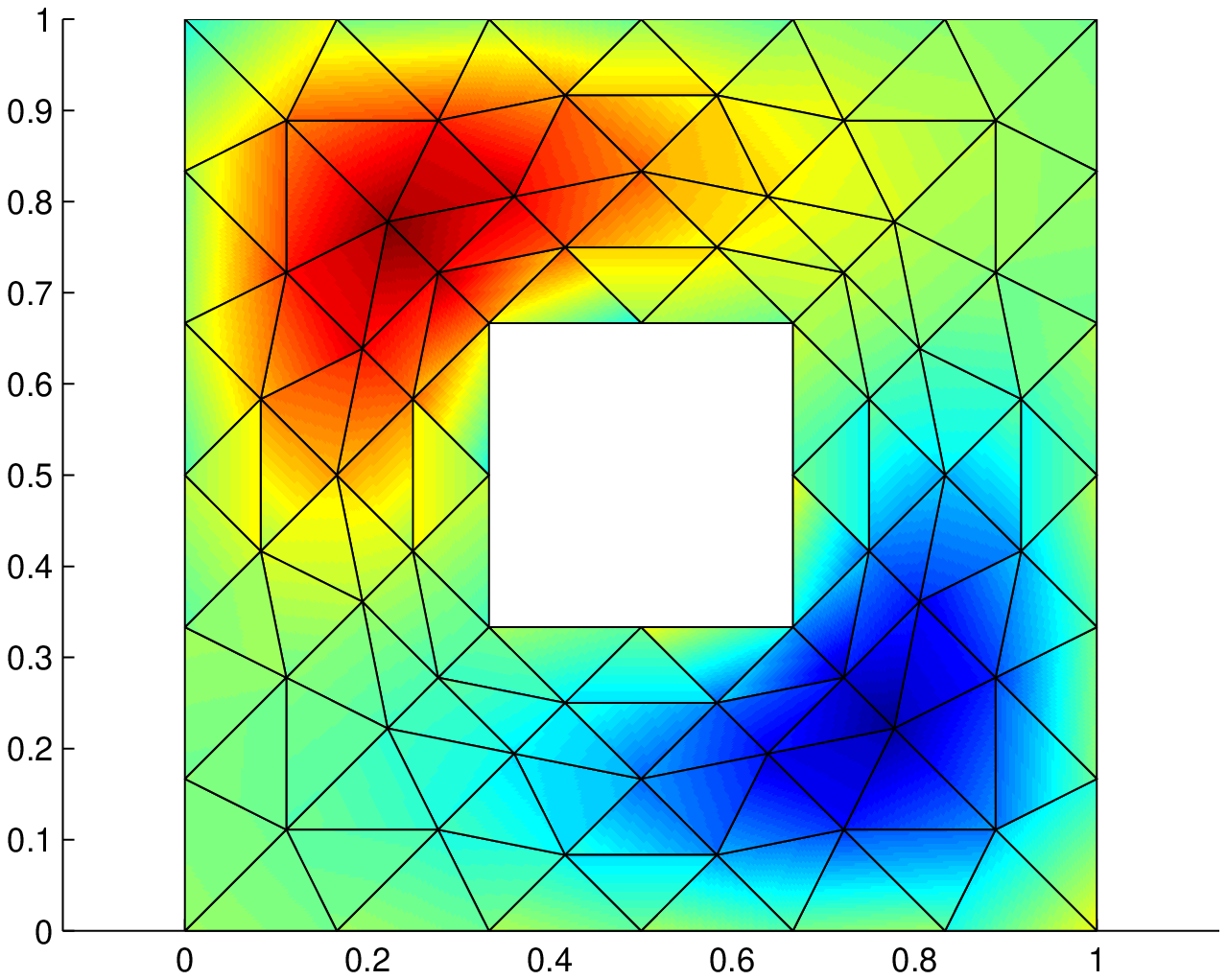}
\includegraphics[width=6cm]{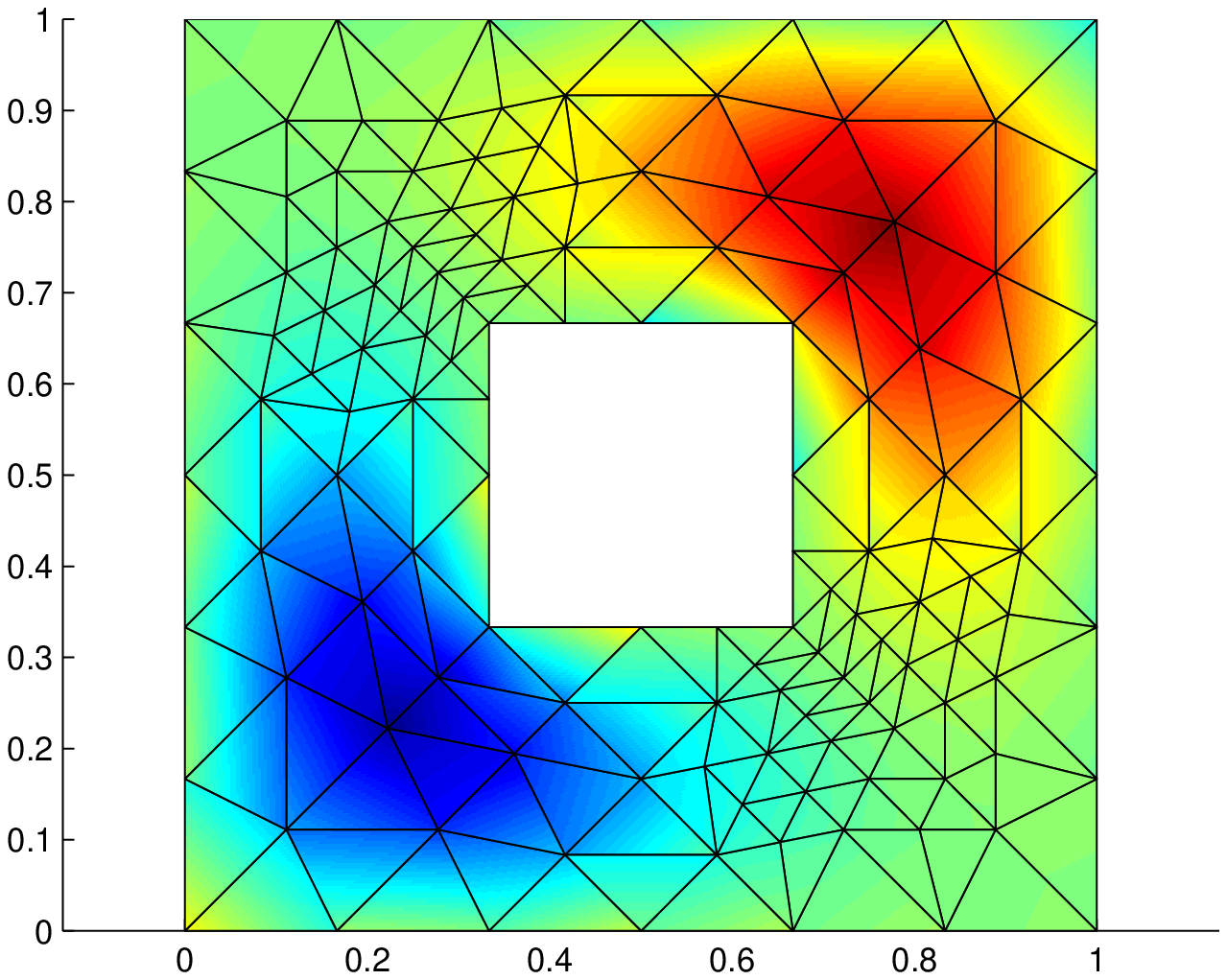}

\includegraphics[width=6cm]{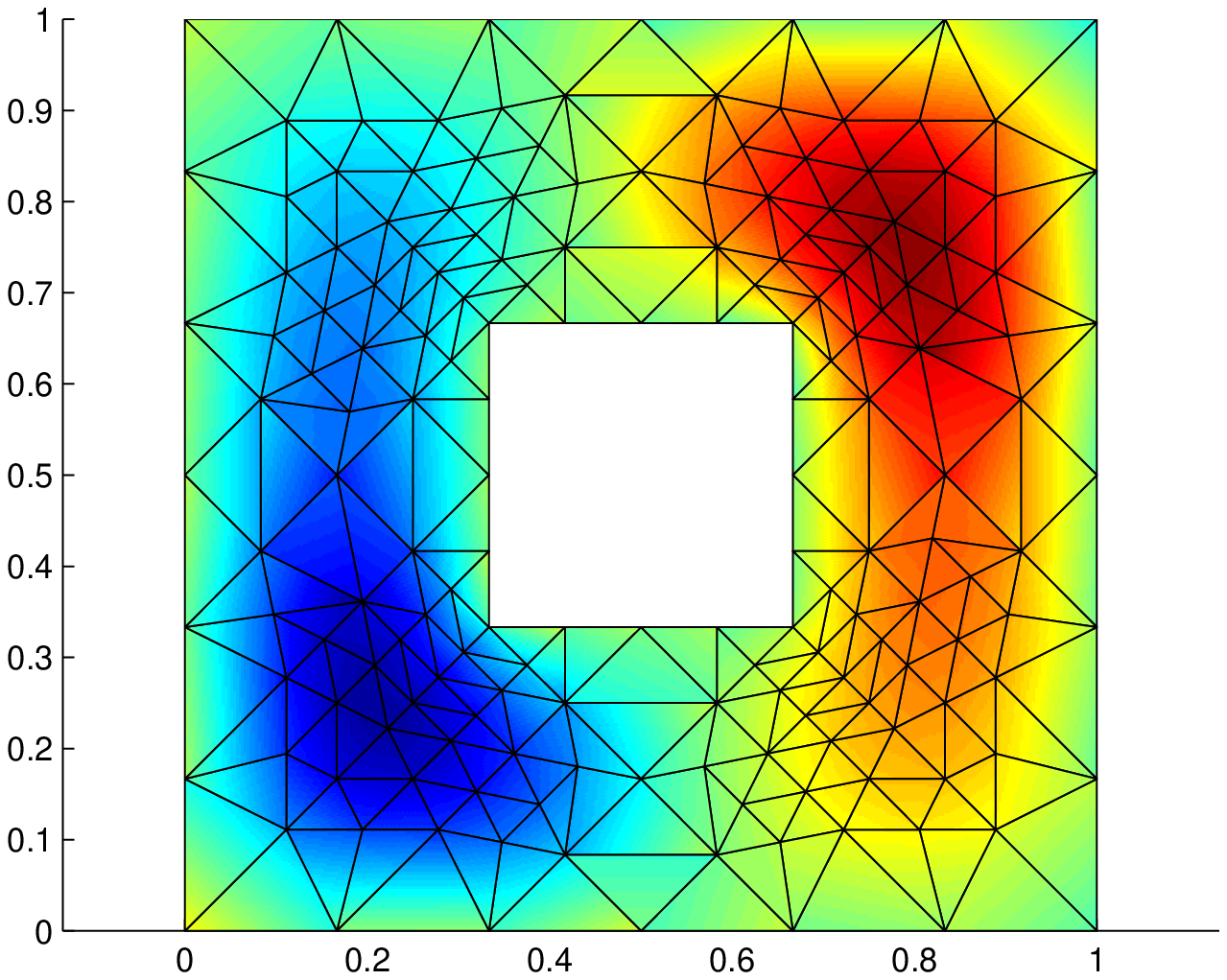}
\includegraphics[width=6cm]{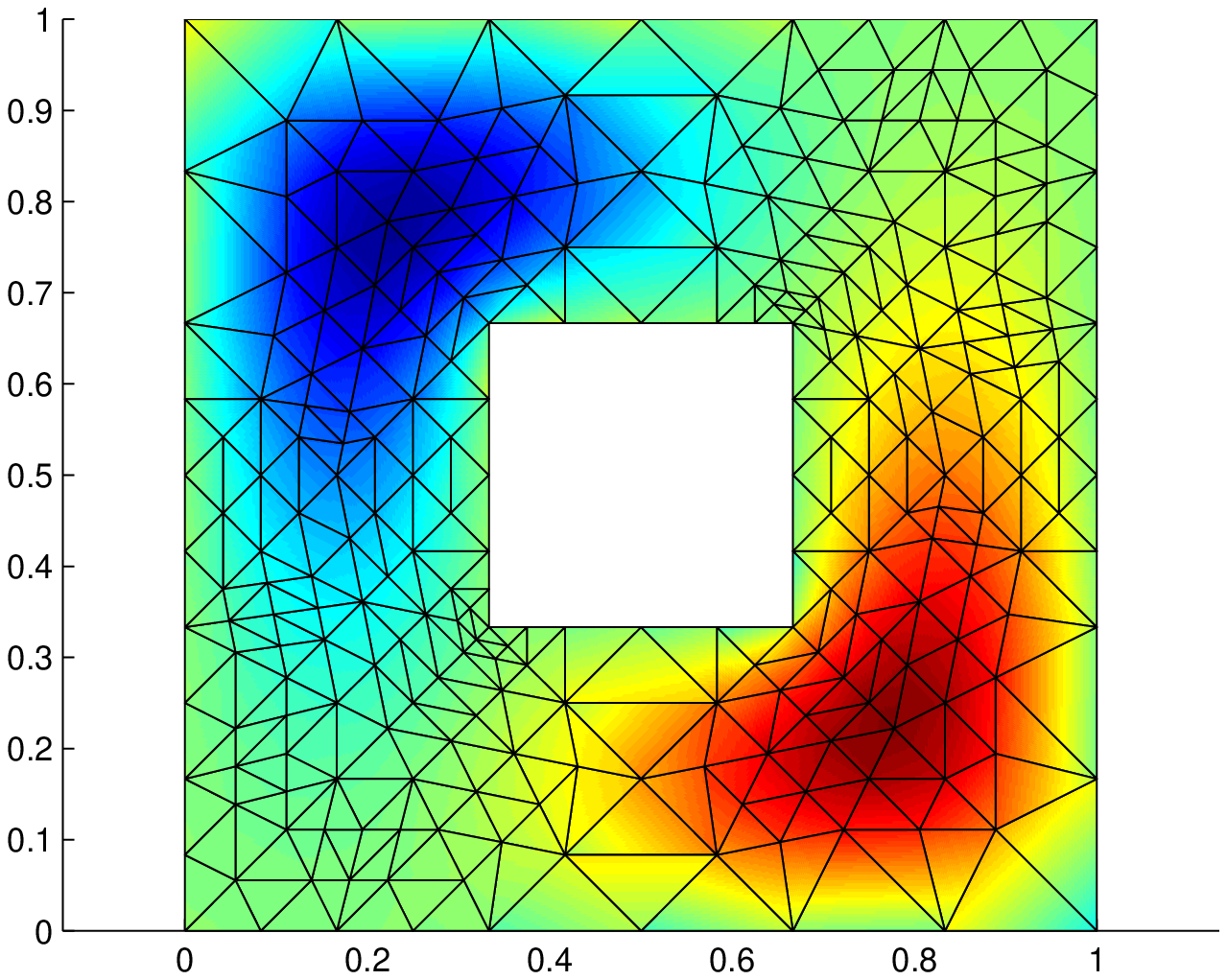}
\caption{The eigenfunctions corresponding to $\lambda_{2,h}$ on the square
ring for the first four refinement levels (non structured initial mesh, refined
based on $u_{2,h}$)}
\label{fg:ring_irre_fun}
\end{figure}

In order to better highlight this phenomenon, in Figure~\ref{fg:ring-allmesh}
we report a sequence of meshes obtained after eight level of refinements. It
is clear that the method refines a region close to all four reentrant corners
even if we are constructing our indicator only according to the second
discrete eigenfunction. Moreover, the refinement strategy is not optimal since
at each step only two of the four involved regions are considered.

\begin{figure}
\includegraphics[width=6cm]{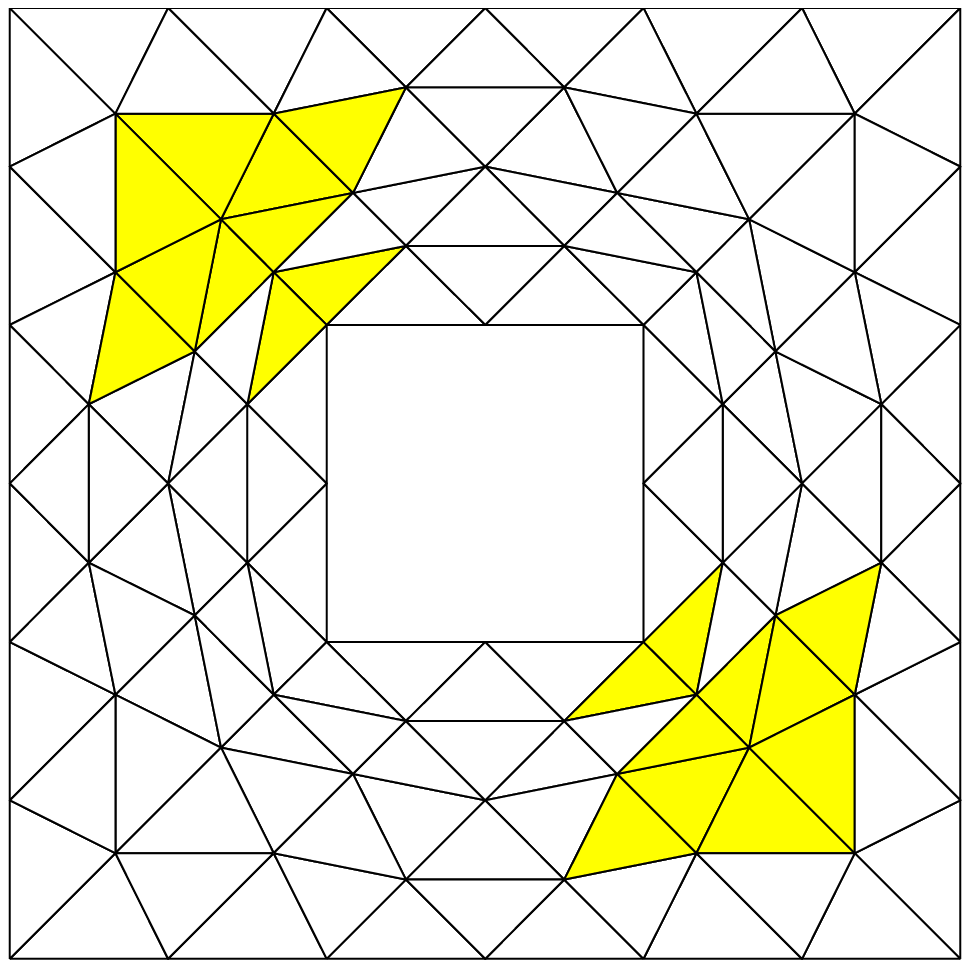}
\includegraphics[width=6cm]{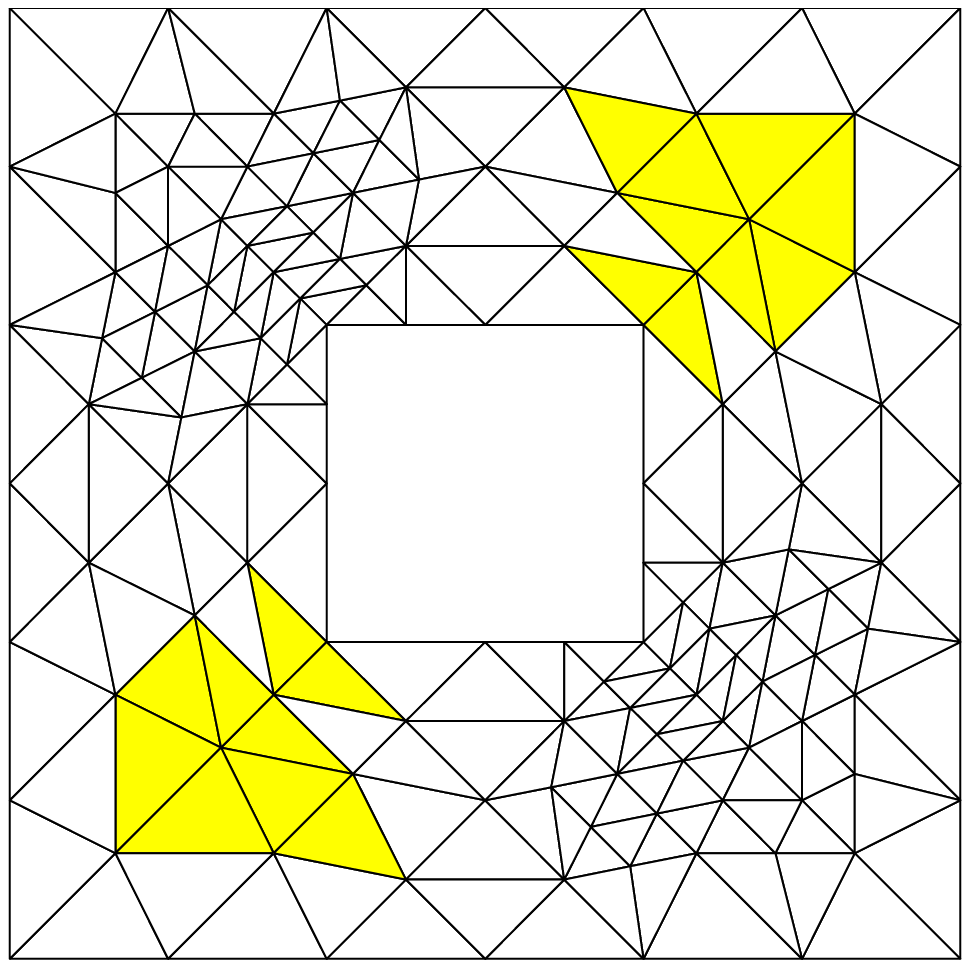}

\includegraphics[width=6cm]{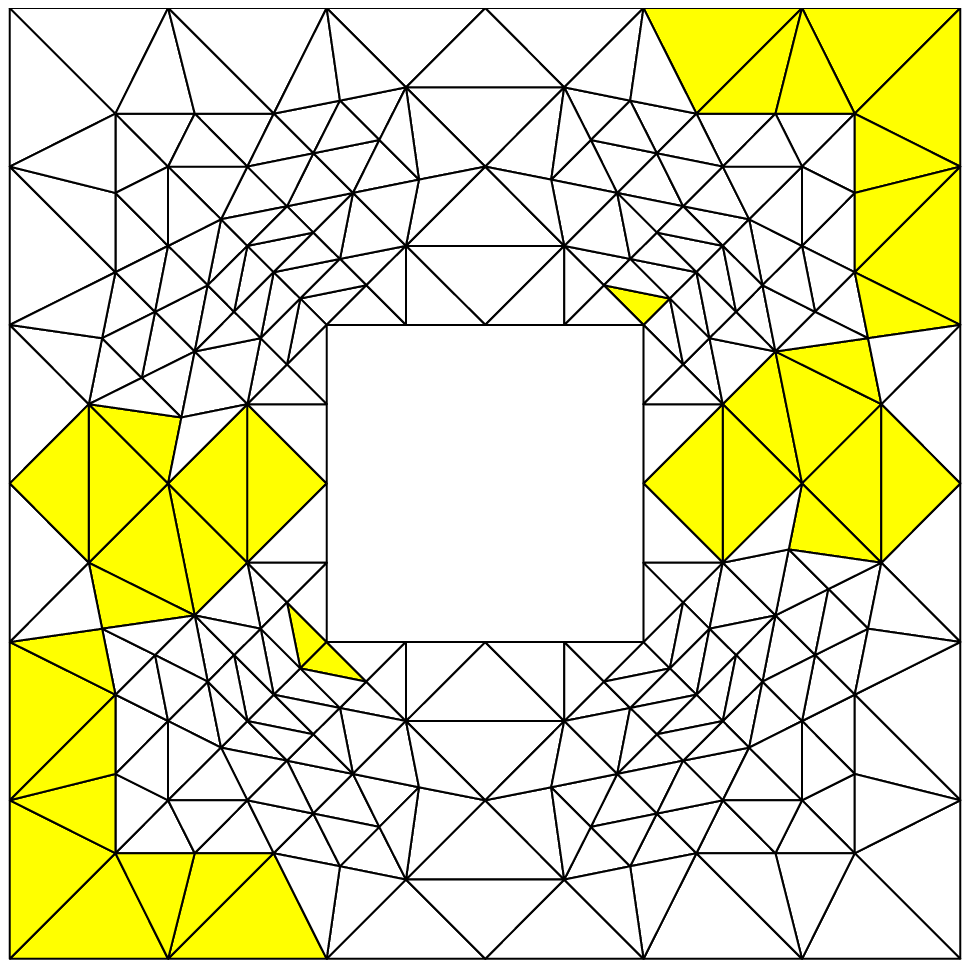}
\includegraphics[width=6cm]{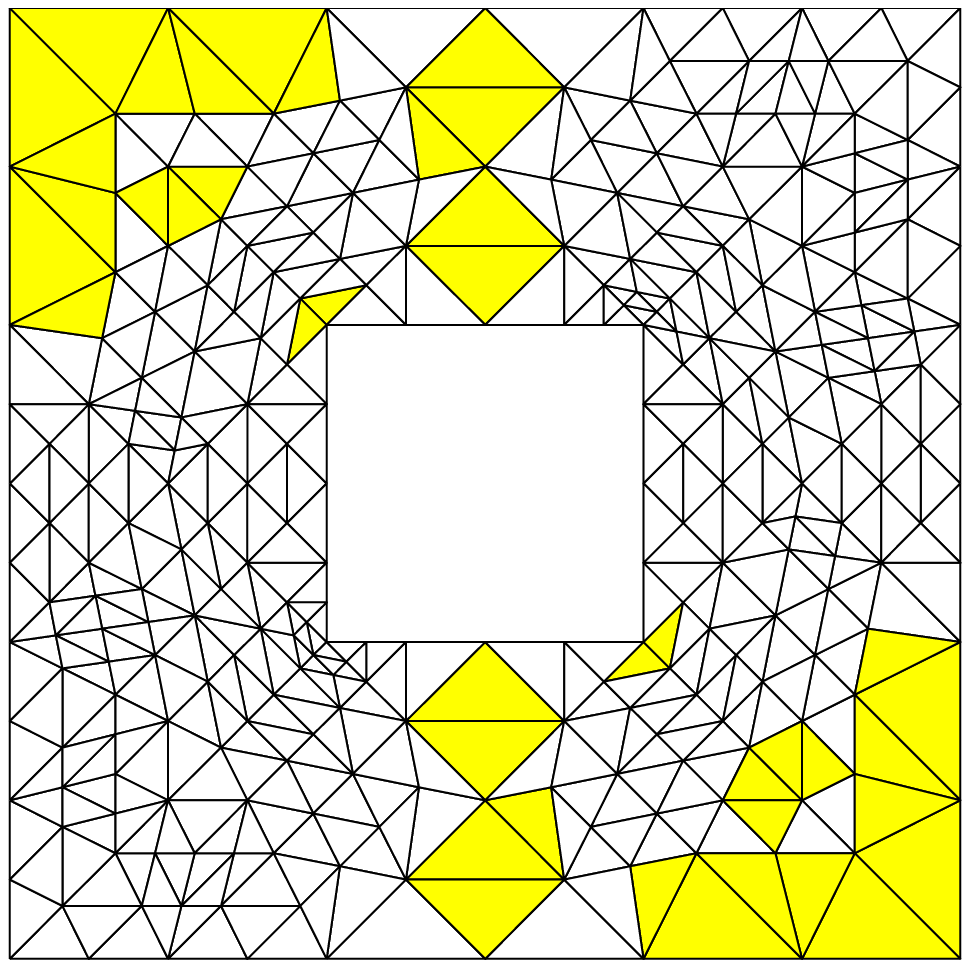}

\includegraphics[width=6cm]{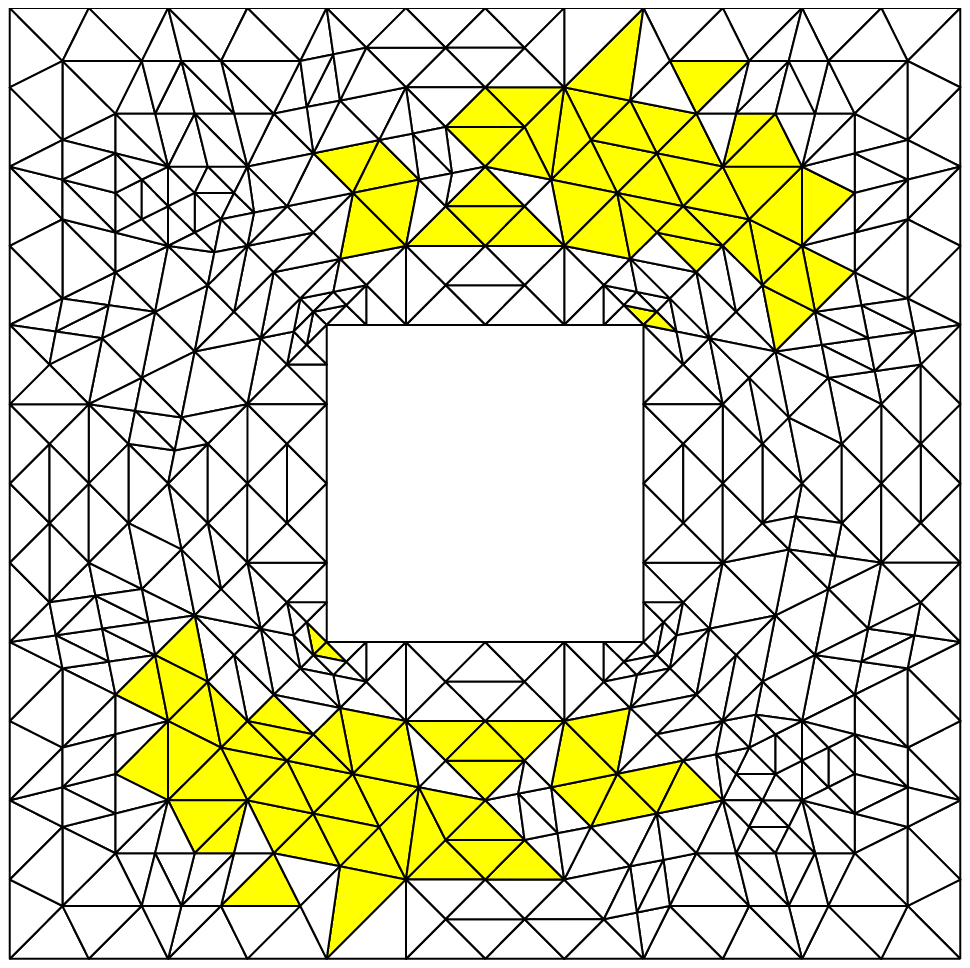}
\includegraphics[width=6cm]{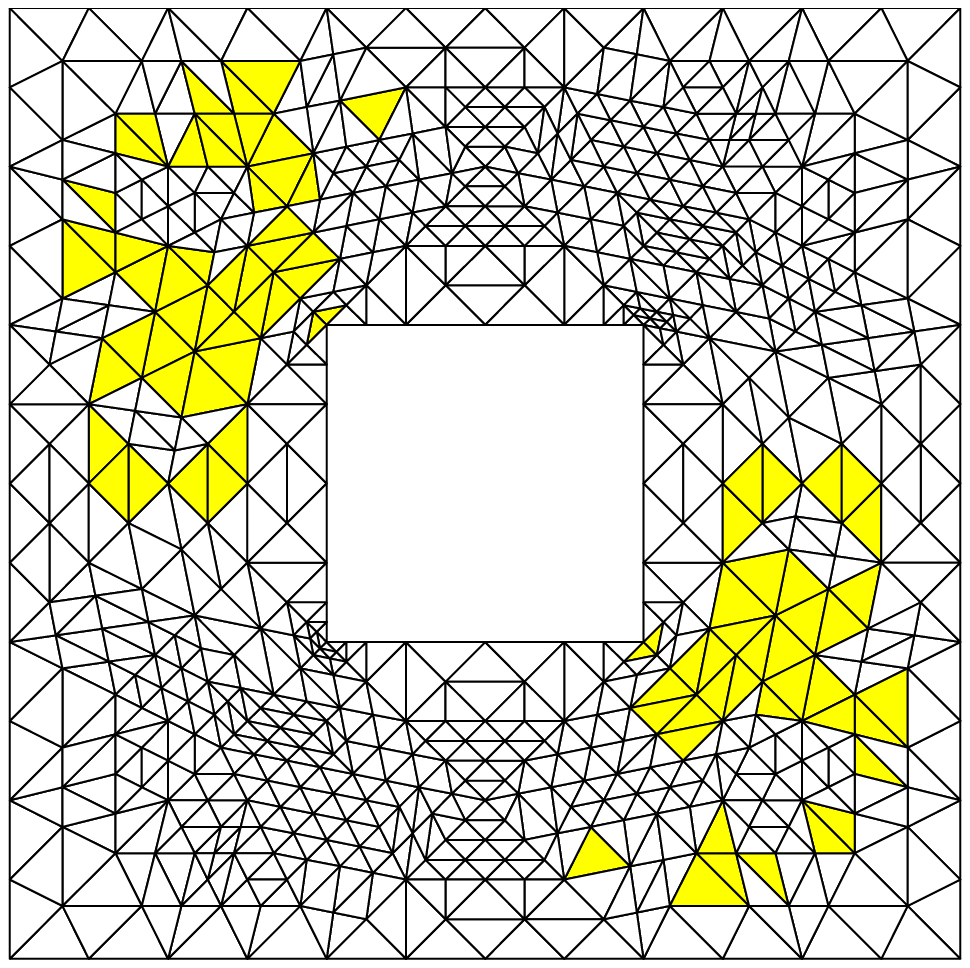}

\includegraphics[width=6cm]{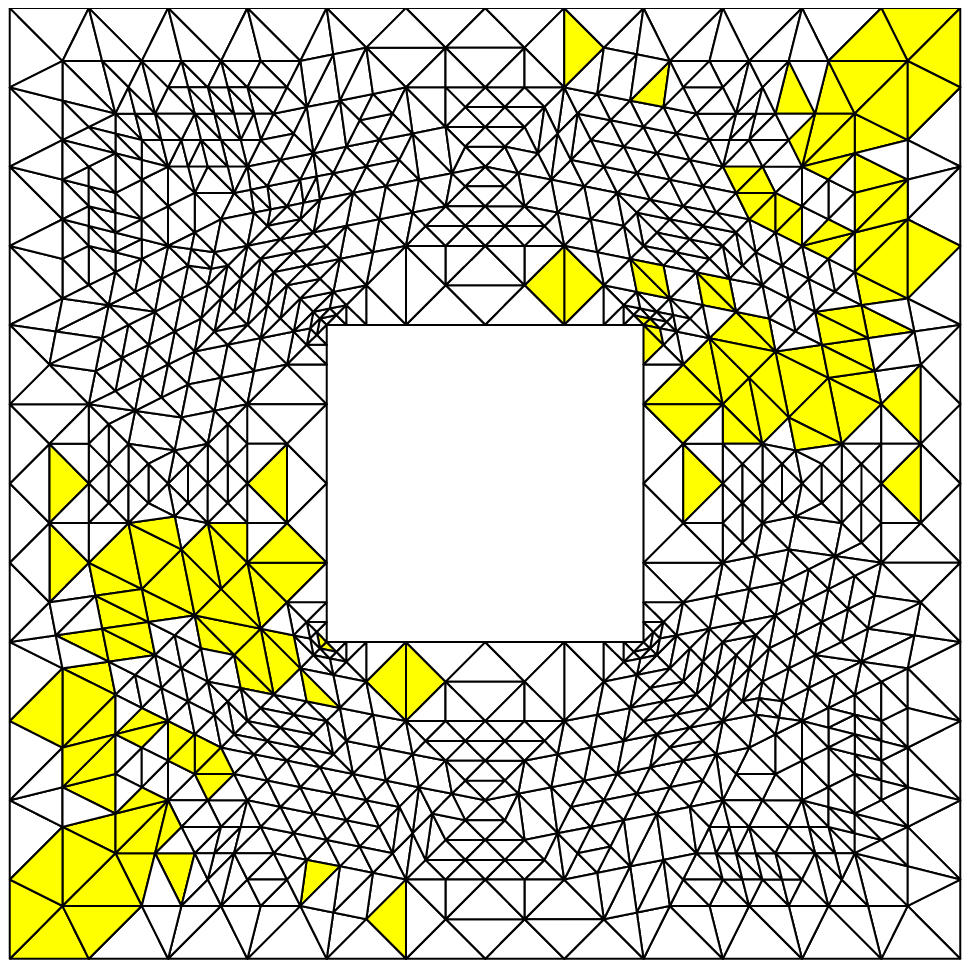}
\includegraphics[width=6cm]{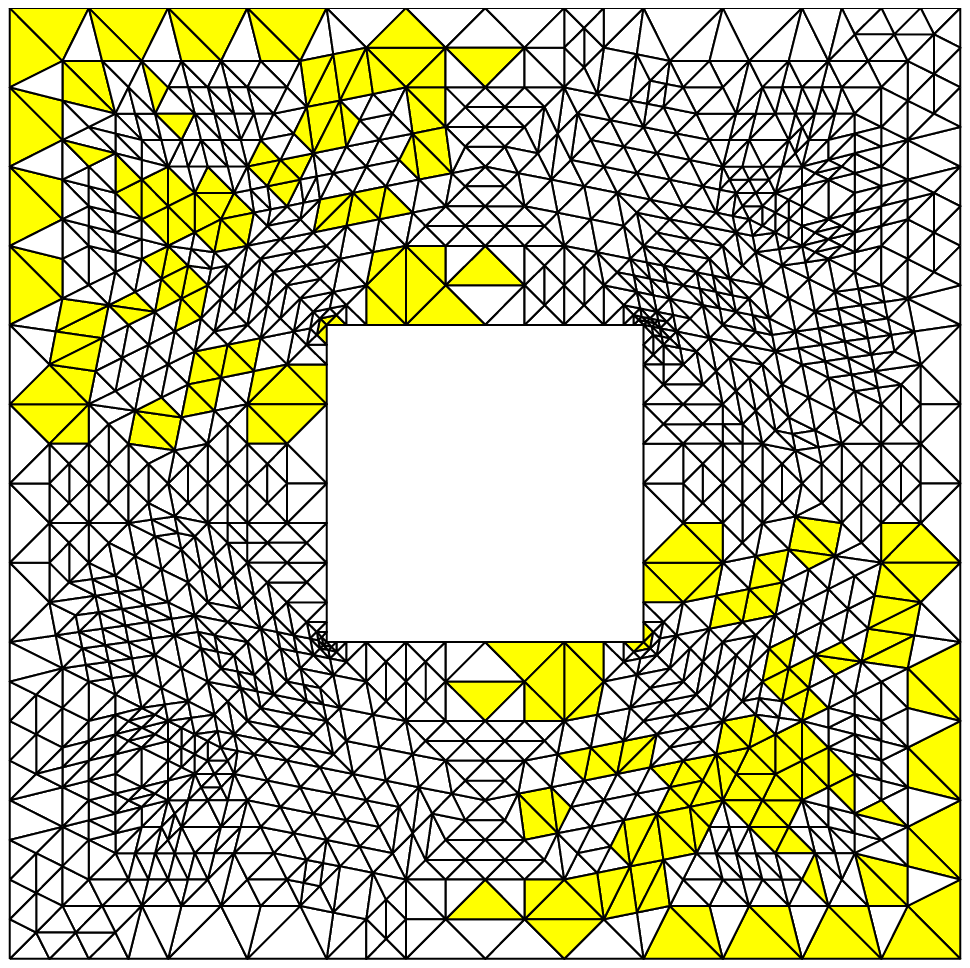}
\caption{Eight level of refinements of the square ring domain: indicator based
on $u_{2,h}$}
\label{fg:ring-allmesh}
\end{figure}

As a second test, we consider an adaptive strategy driven by the approximation
of the eigenfunction $u_{3,h}$ corresponding to the third eigenvalue. A plot
of the values of $\lambda_{2,h}$ and $\lambda_{3,h}$ is reported in
Figure~\ref{fg:ring_irre_eig1}.
\begin{figure}
\includegraphics[width=8cm]{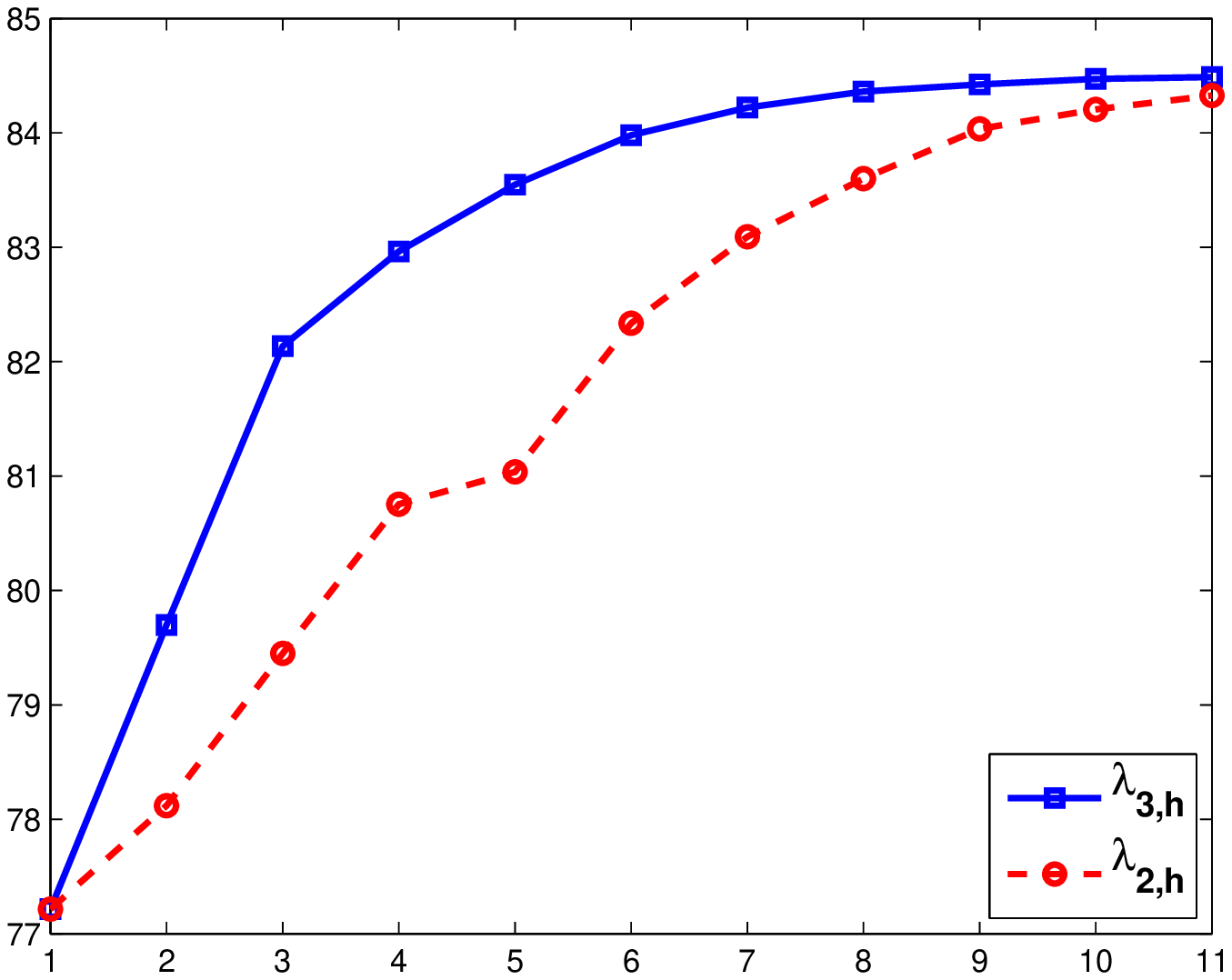}
\caption{The values of the discrete eigenvalues $\lambda_{2,h}$ and
$\lambda_{3,h}$ on the square ring for different refinement levels (non
structured initial mesh, refined based on $u_{3,h}$)}
\label{fg:ring_irre_eig1}
\end{figure}
It is clear that in this case the adaptive procedure is effective in pushing
the convergence of $\lambda_{3,h}$, while $\lambda_{2,h}$ is converging more
slowly.

The corresponding meshes are reported in Figure~\ref{fg:ring-allmesh1}, where
it can be seen that the refinements is always performed in a neighborhood of
the same two corners.

\begin{figure}
\includegraphics[width=6cm]{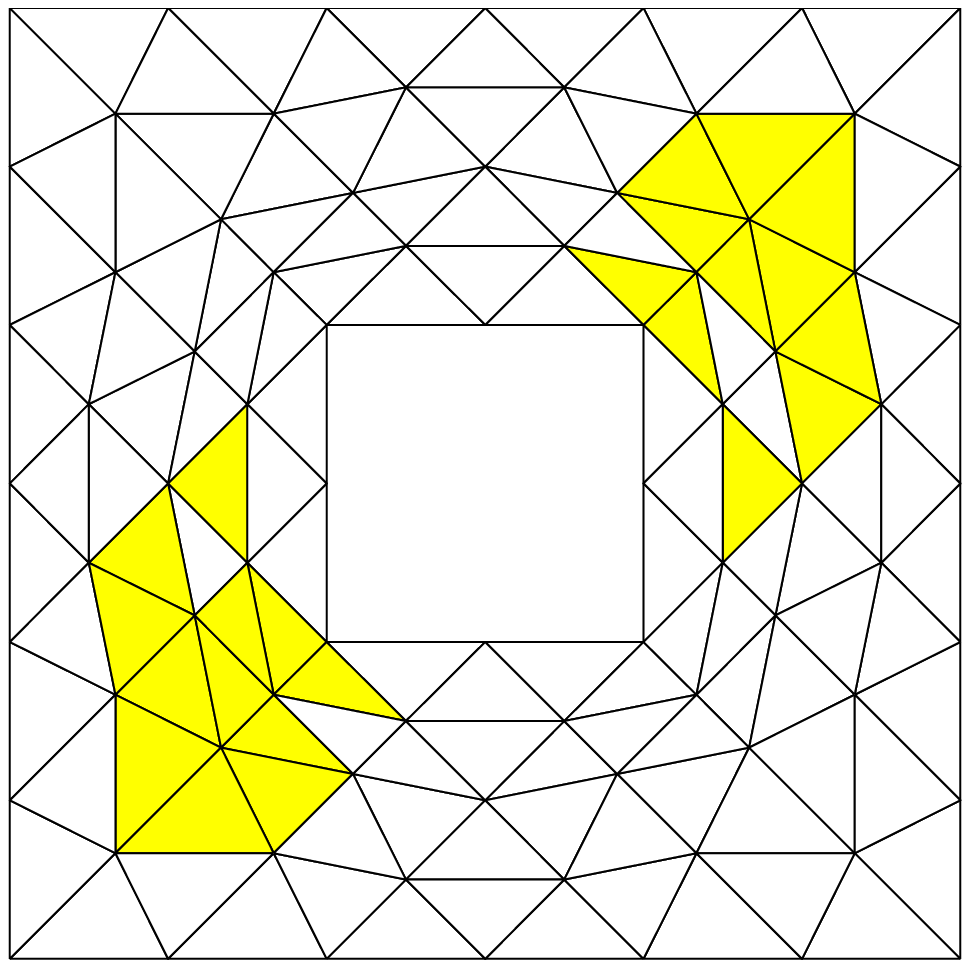}
\includegraphics[width=6cm]{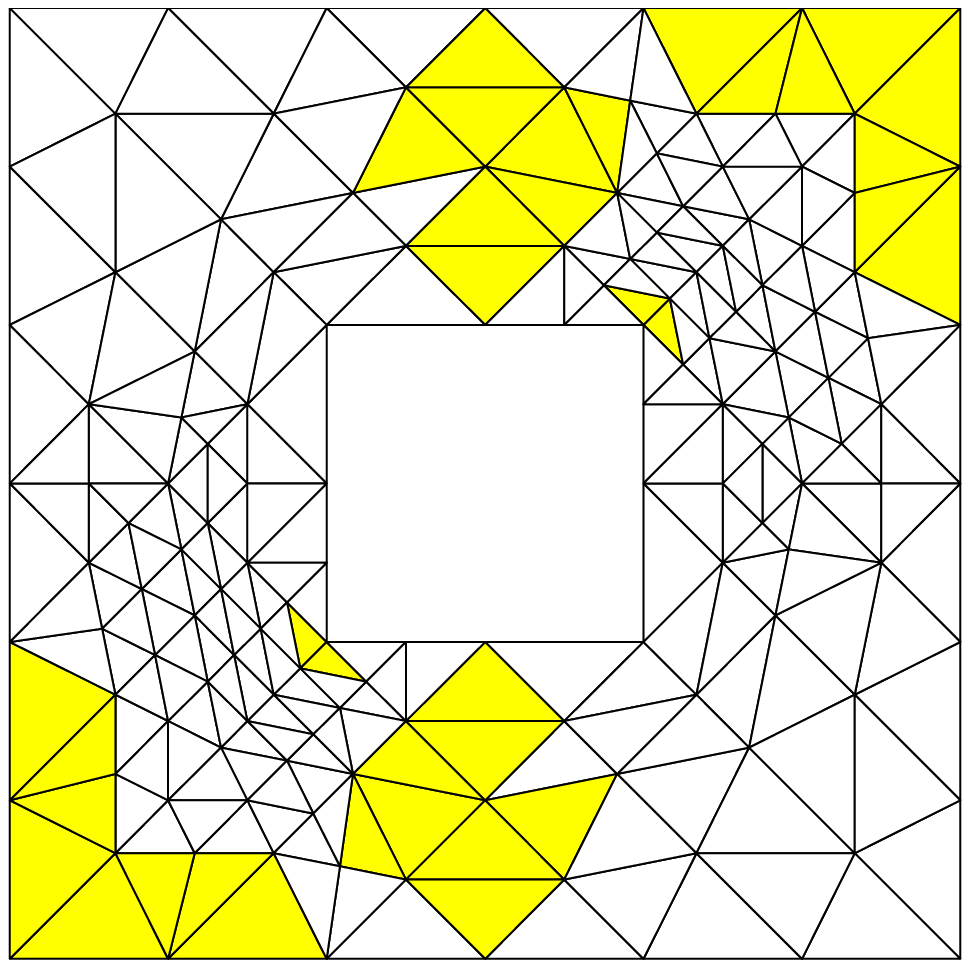}

\includegraphics[width=6cm]{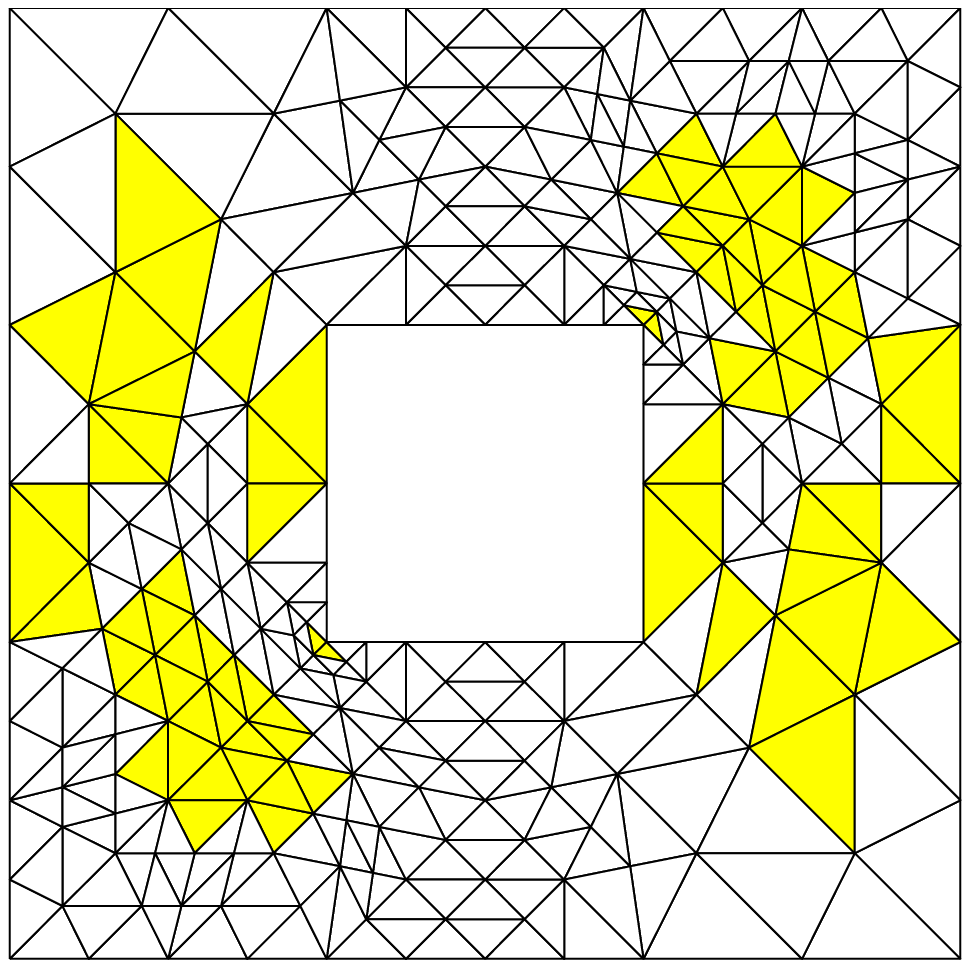}
\includegraphics[width=6cm]{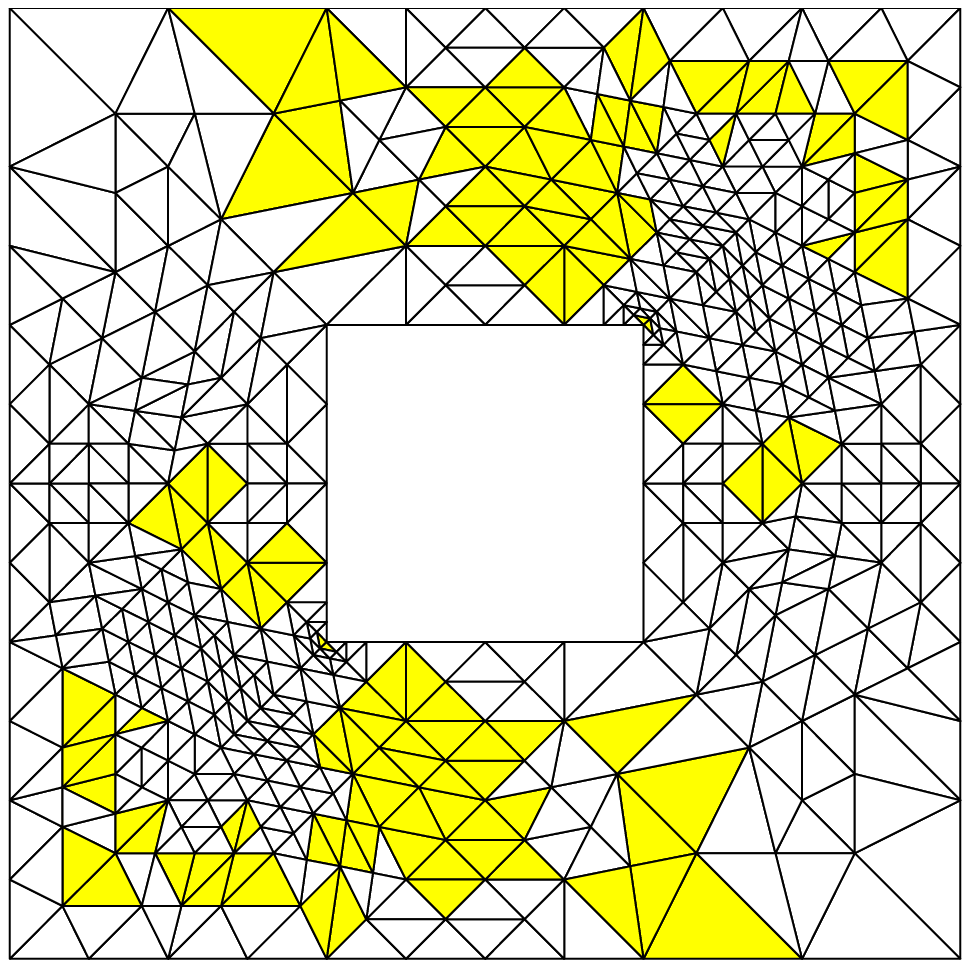}

\includegraphics[width=6cm]{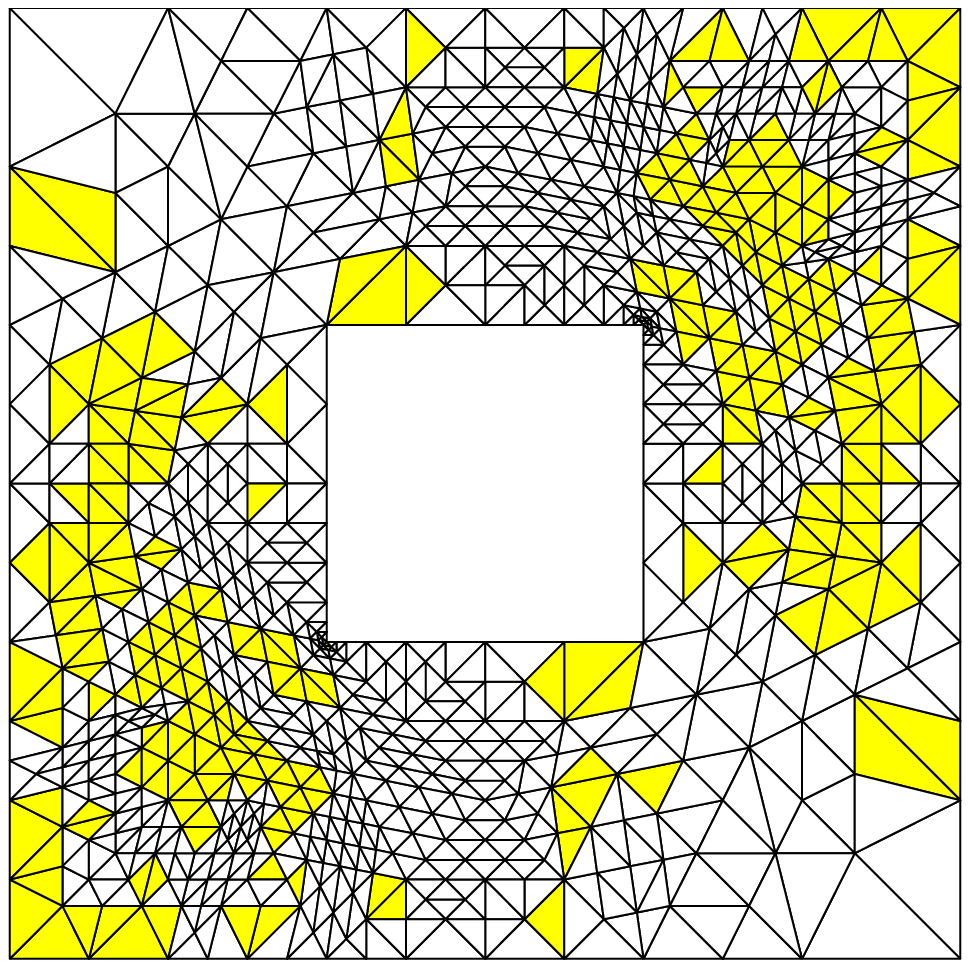}
\includegraphics[width=6cm]{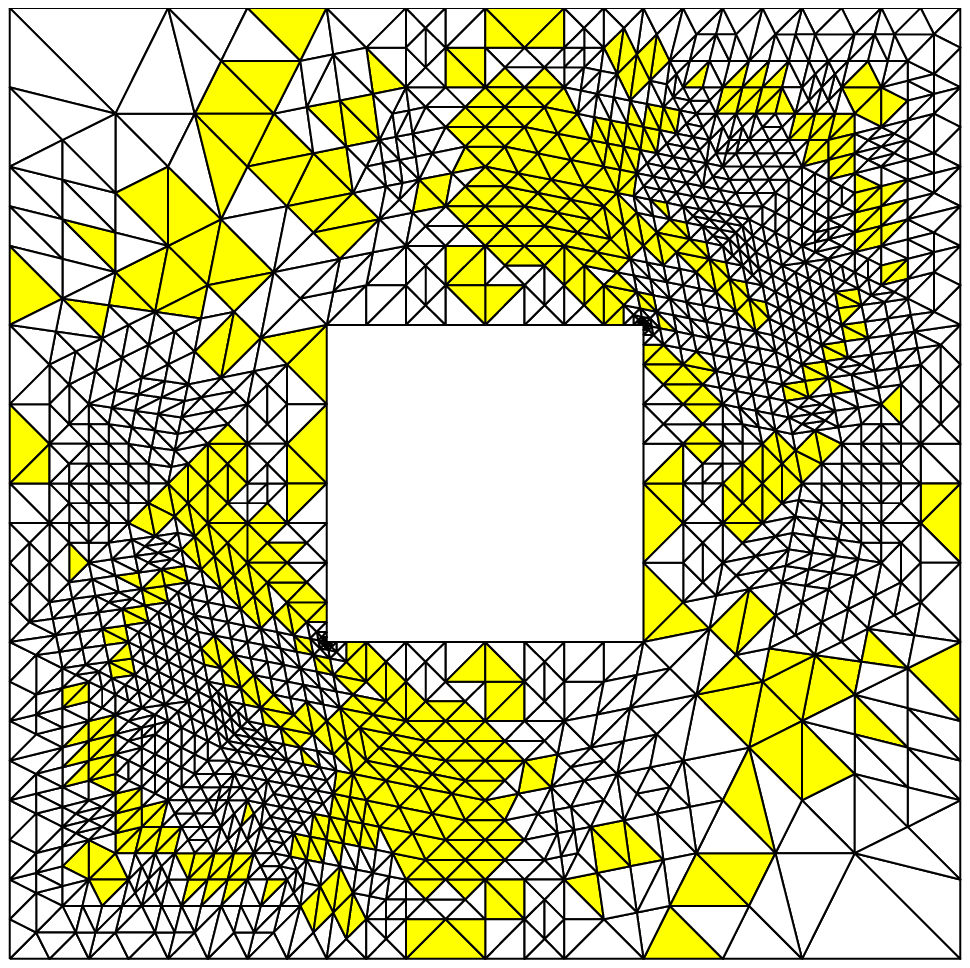}

\includegraphics[width=6cm]{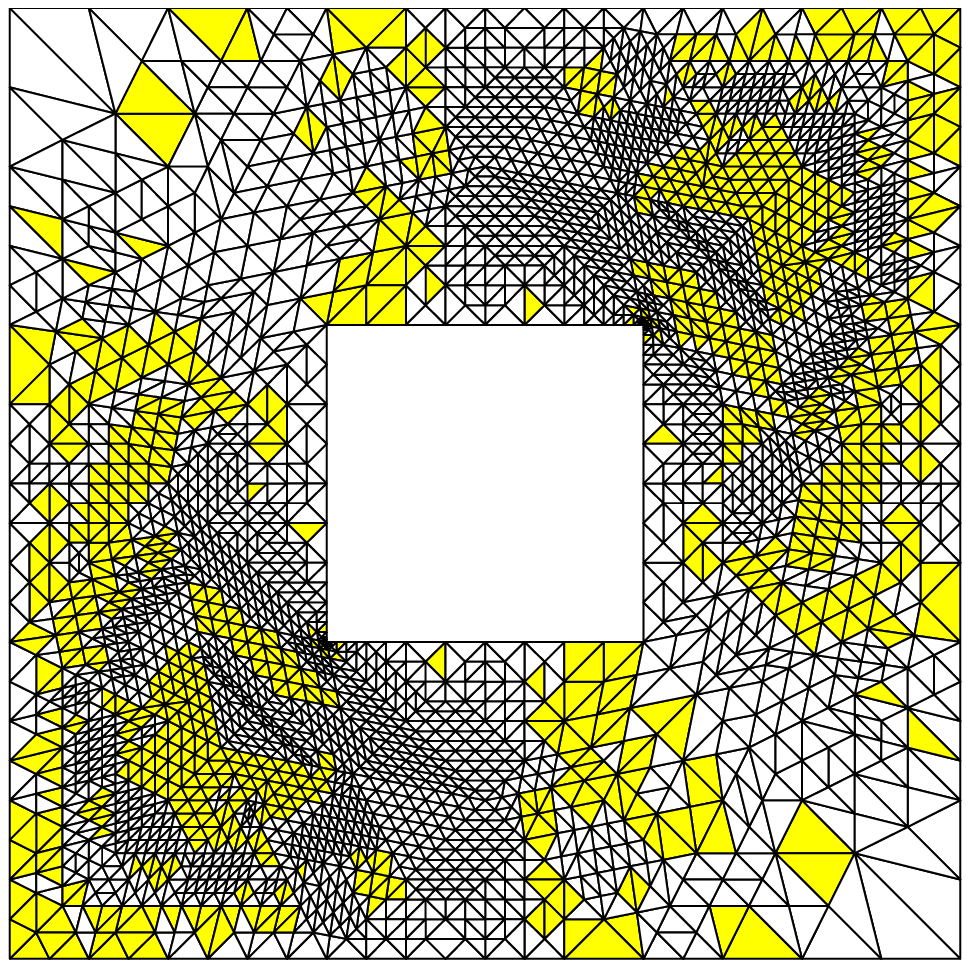}
\includegraphics[width=6cm]{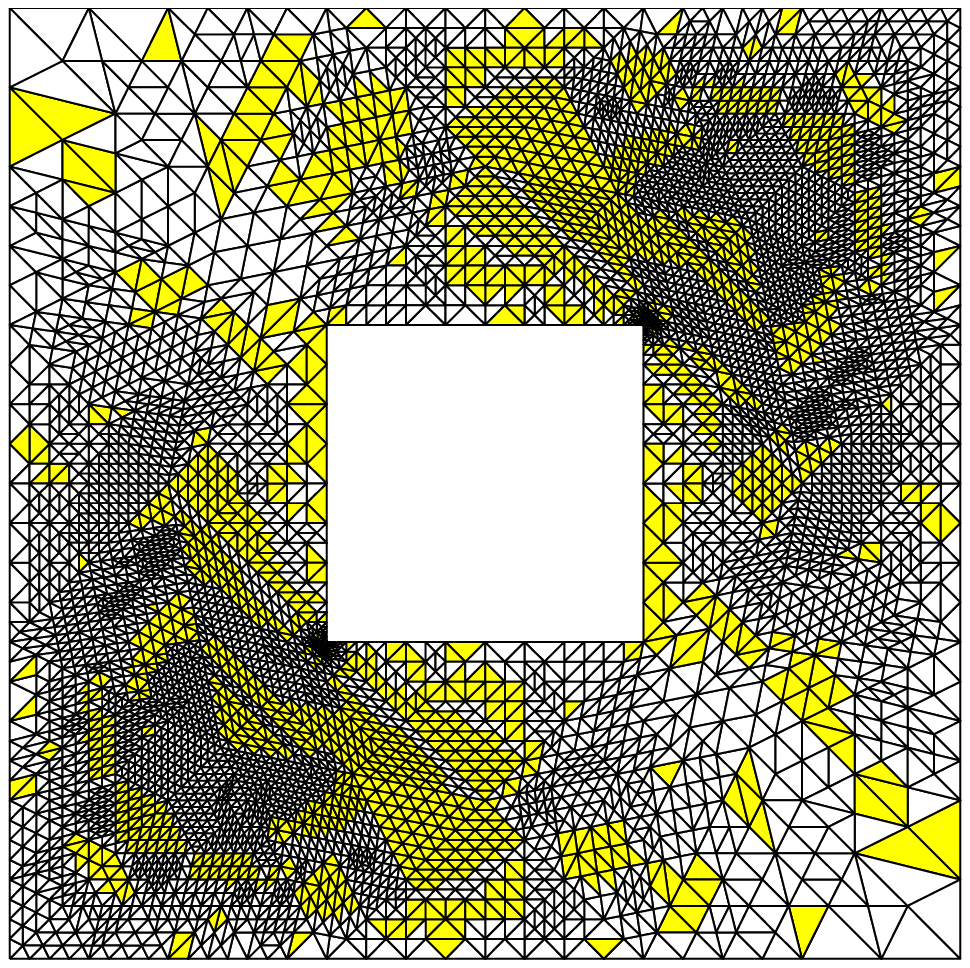}
\caption{Eight level of refinements of the square ring domain: indicator based
on $u_{3,h}$}
\label{fg:ring-allmesh1}
\end{figure}

The eigenfunctions on the first four meshes are plotted in
Figure~\ref{fg:ring_irre_fun1}: it can be seen that they always have
singularities about the top right and bottom left reentrant corners.
\NEW{It should be clear that this behavior is just a good luck of this
particular situation. Readers are warned that in case of invariant spaces of
dimension greater than one, an effective adaptive strategy should consider
all involved discrete eigenfunctions}.

\begin{figure}
\includegraphics[width=6cm]{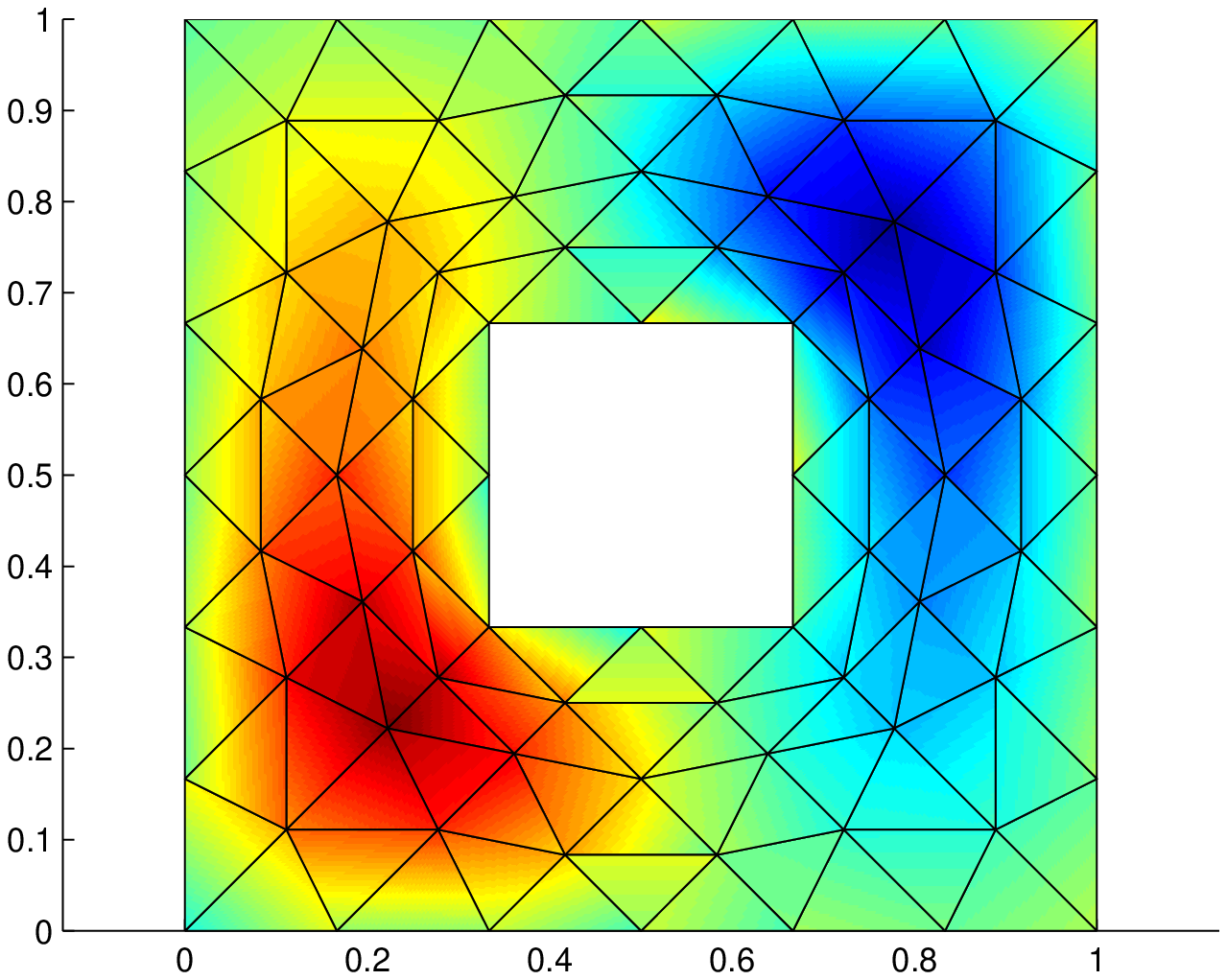}
\includegraphics[width=6cm]{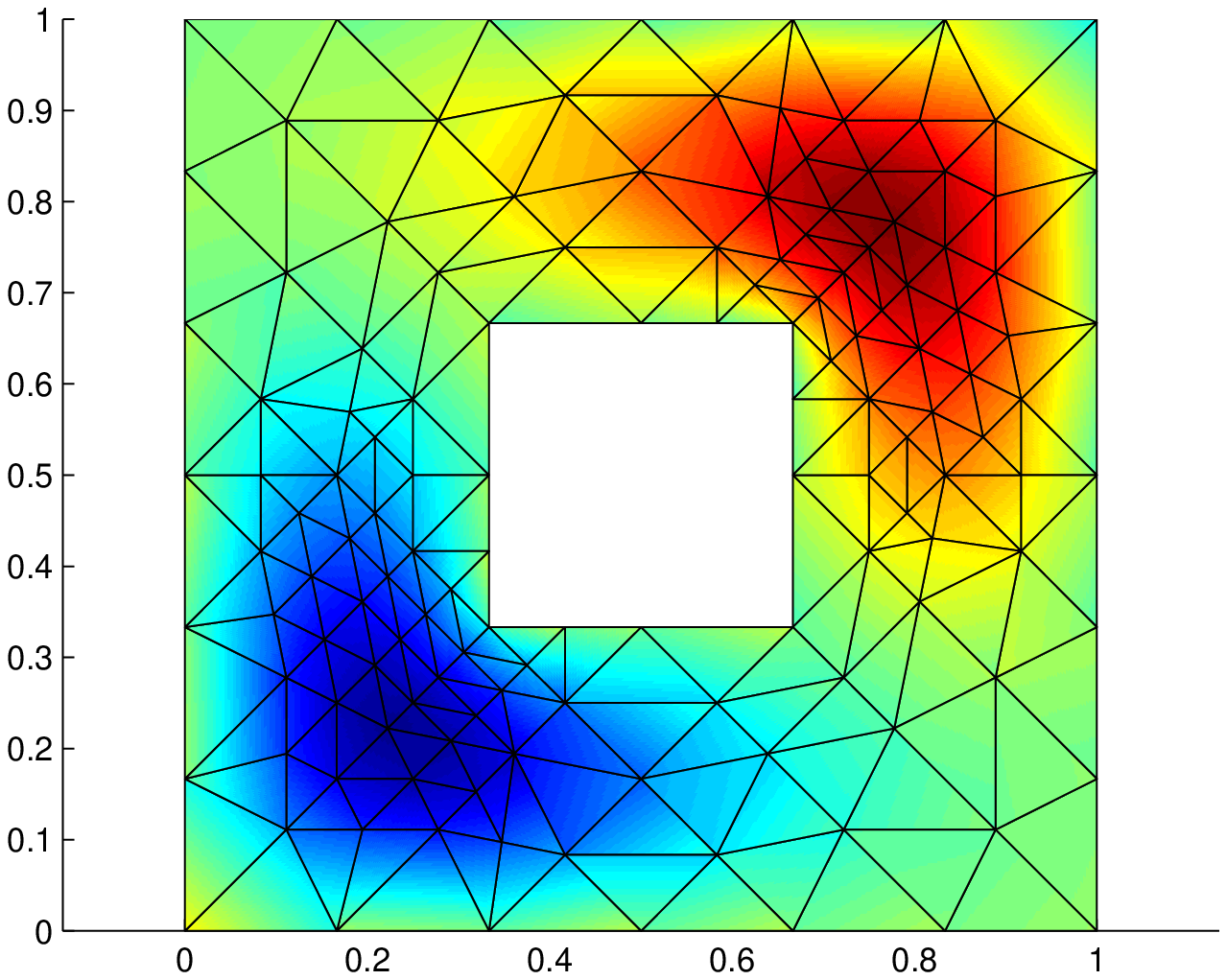}

\includegraphics[width=6cm]{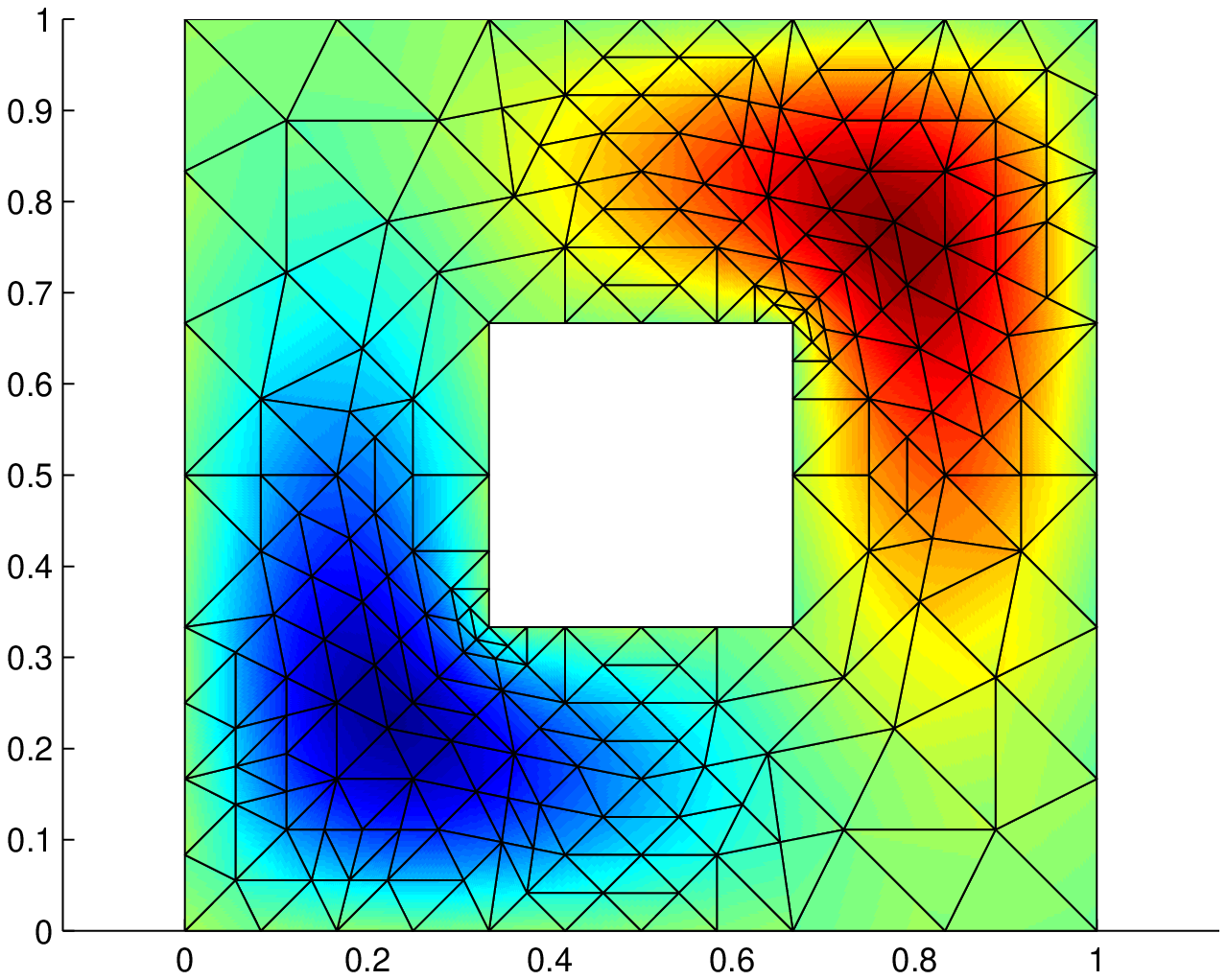}
\includegraphics[width=6cm]{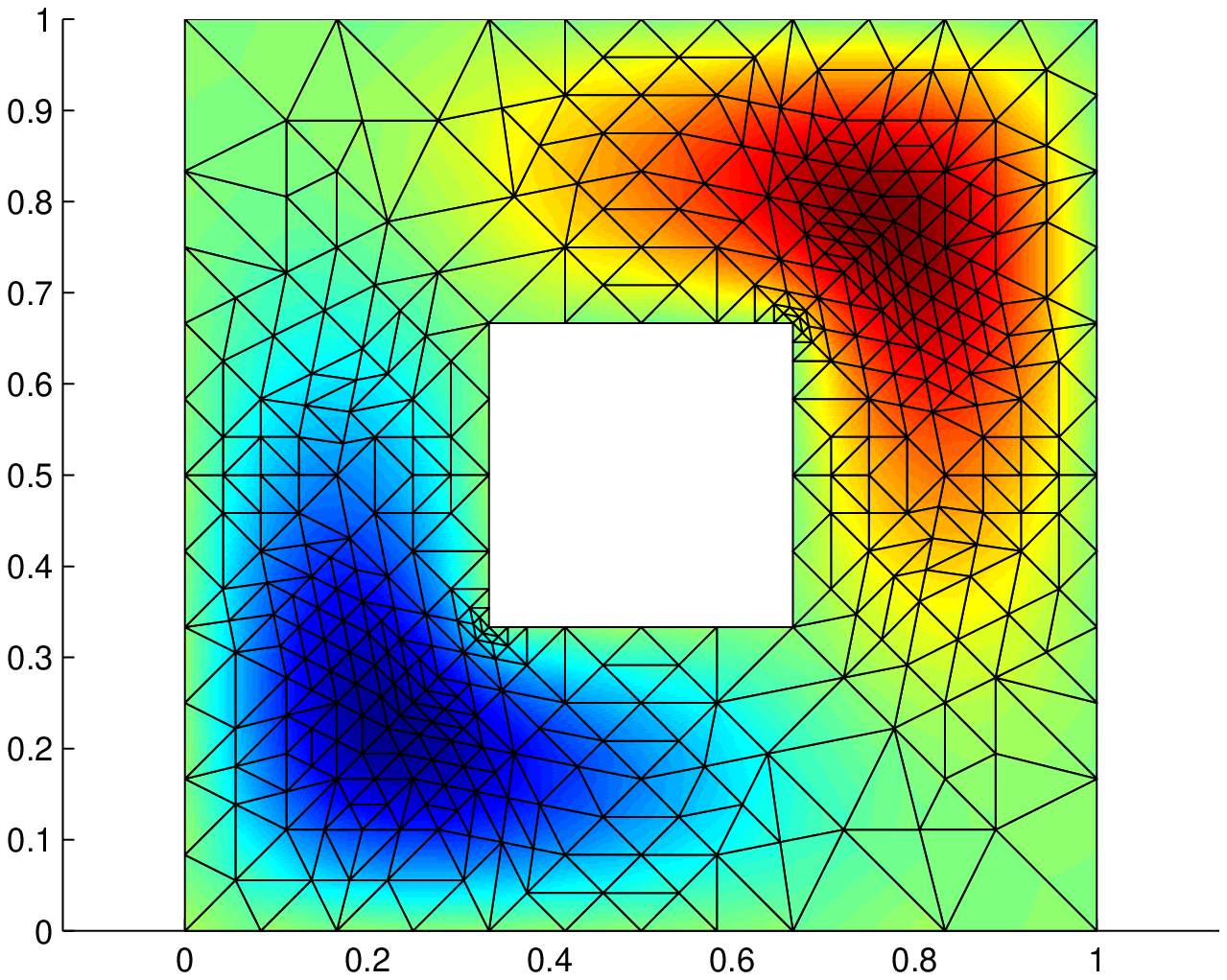}
\caption{The eigenfunctions corresponding to $\lambda_{3,h}$ on the square
ring for the first four refinement levels (non structured initial mesh,
refined based on $u_{3,h}$)}
\label{fg:ring_irre_fun1}
\end{figure}

\NEW{This approach is presented as a conclusion of this section, where we
repeat} the same computation with an error
indicator based on both singular eigenfunctions $u_{2,h}$ and $u_{3,h}$.
The plot of the eigenvalues is reported in Figure~\ref{fg:ring_irre_eig2}
where it can be observed that now the two discrete values approximating the
double eigenvalue $\lambda_2=\lambda_3$ are almost superimposed.

\begin{figure}
\includegraphics[width=8cm]{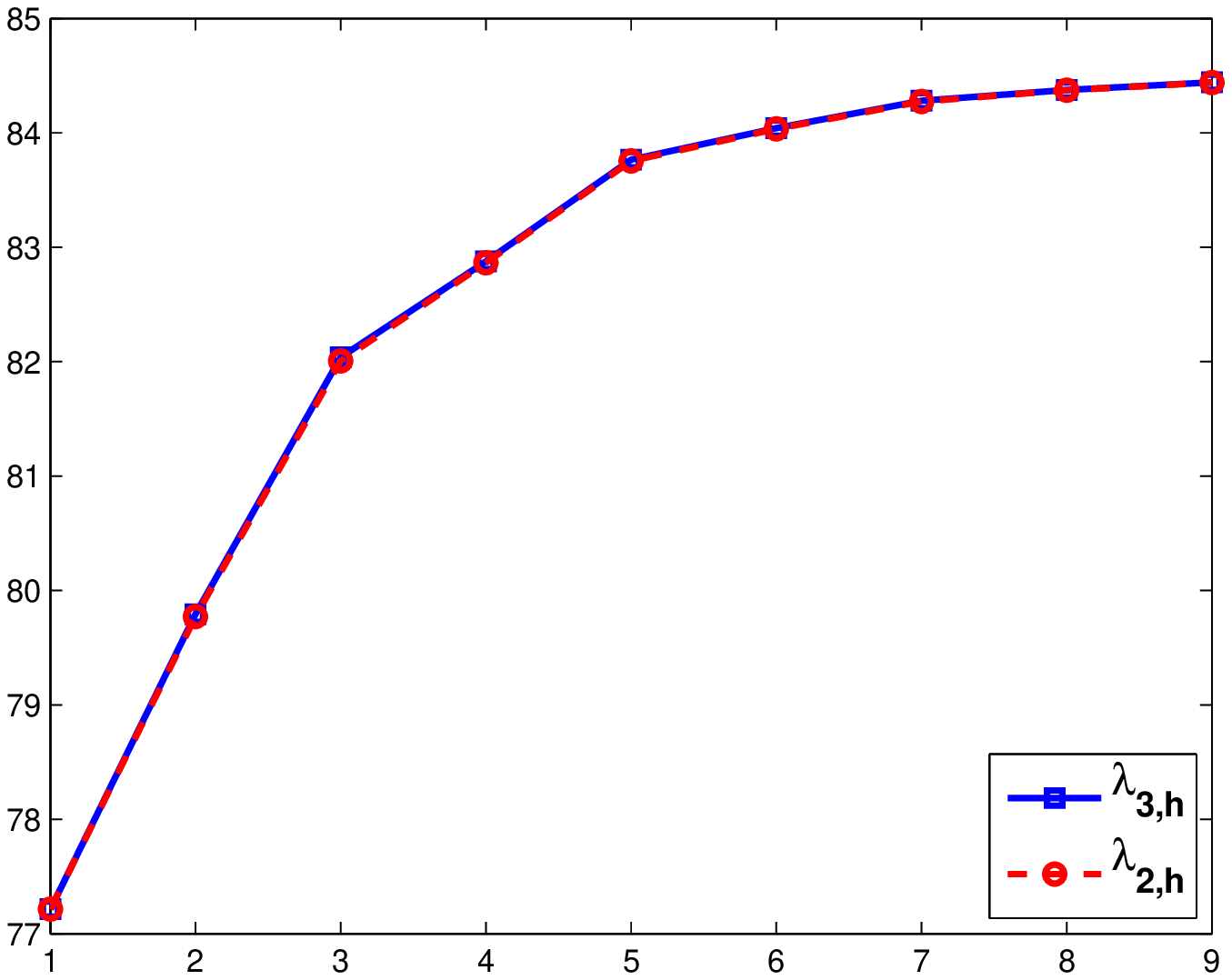}
\caption{The values of the discrete eigenvalues $\lambda_{2,h}$ and
$\lambda_{3,h}$ on the square ring for different refinement levels (non
structured initial mesh, refined based on both $u_{2,h}$ and $u_{3,h}$)}
\label{fg:ring_irre_eig2}
\end{figure}

For completeness, we report in Figure~\ref{fg:ring_irre_fun2} the
eigenfunctions corresponding to $\lambda_{2,h}$ and $\lambda_{3,h}$ after
three refinements and in Figure~\ref{fg:ring-allmesh2} the sequence of
mesh obtained after eight level of refinements.

\begin{figure}
\includegraphics[width=6cm]{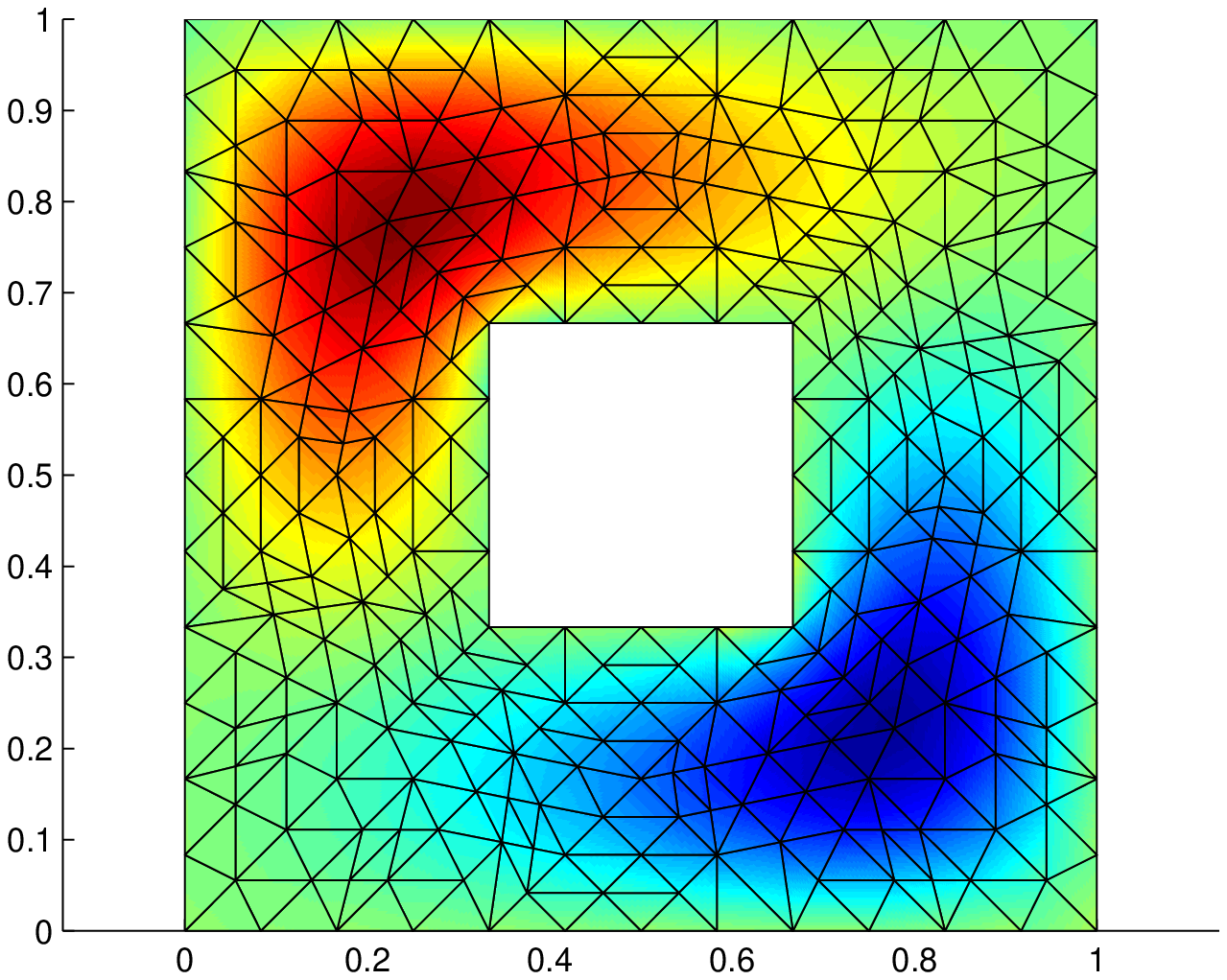}
\includegraphics[width=6cm]{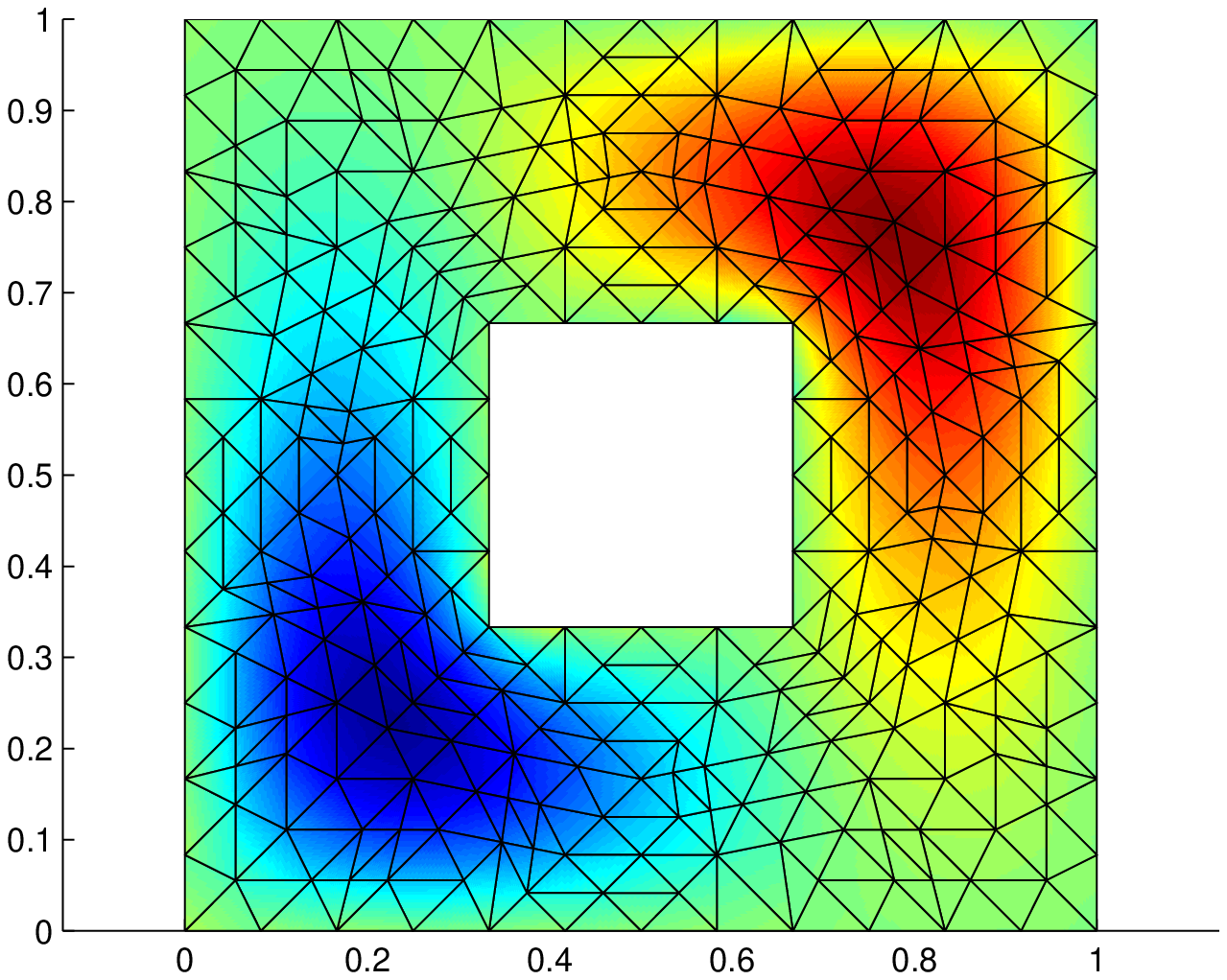}
\caption{Eigenfunctions corresponding to $\lambda_{2,h}$ and $\lambda_{3,h}$
after three levels of refinements: indicator based on $u_{2,h}$ and
$u_{3,h}$}
\label{fg:ring_irre_fun2}
\end{figure}

\begin{figure}
\includegraphics[width=6cm]{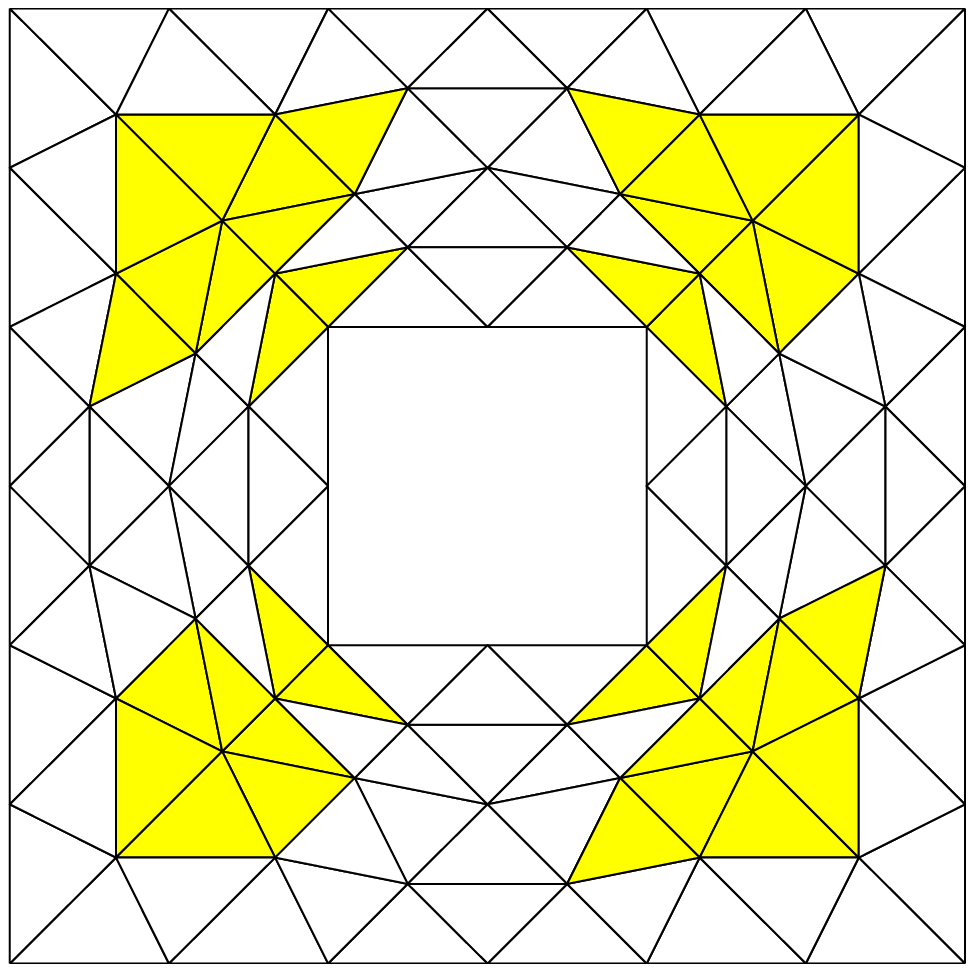}
\includegraphics[width=6cm]{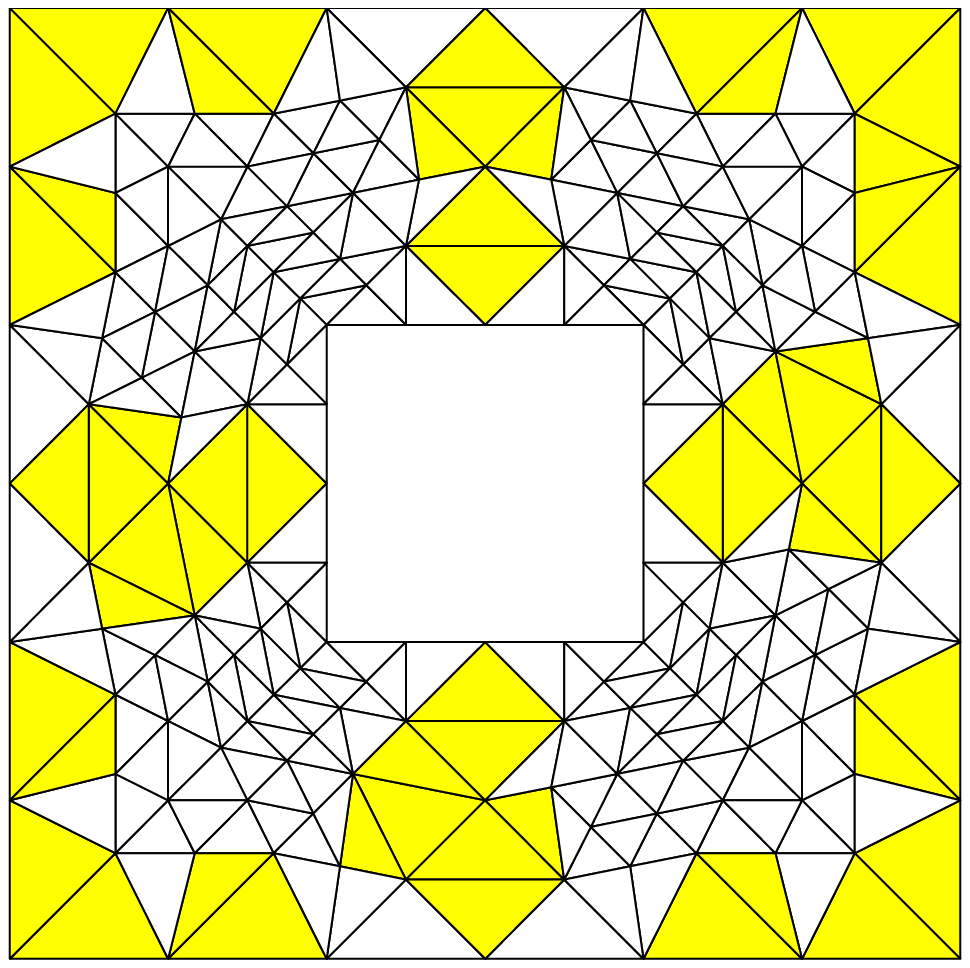}

\includegraphics[width=6cm]{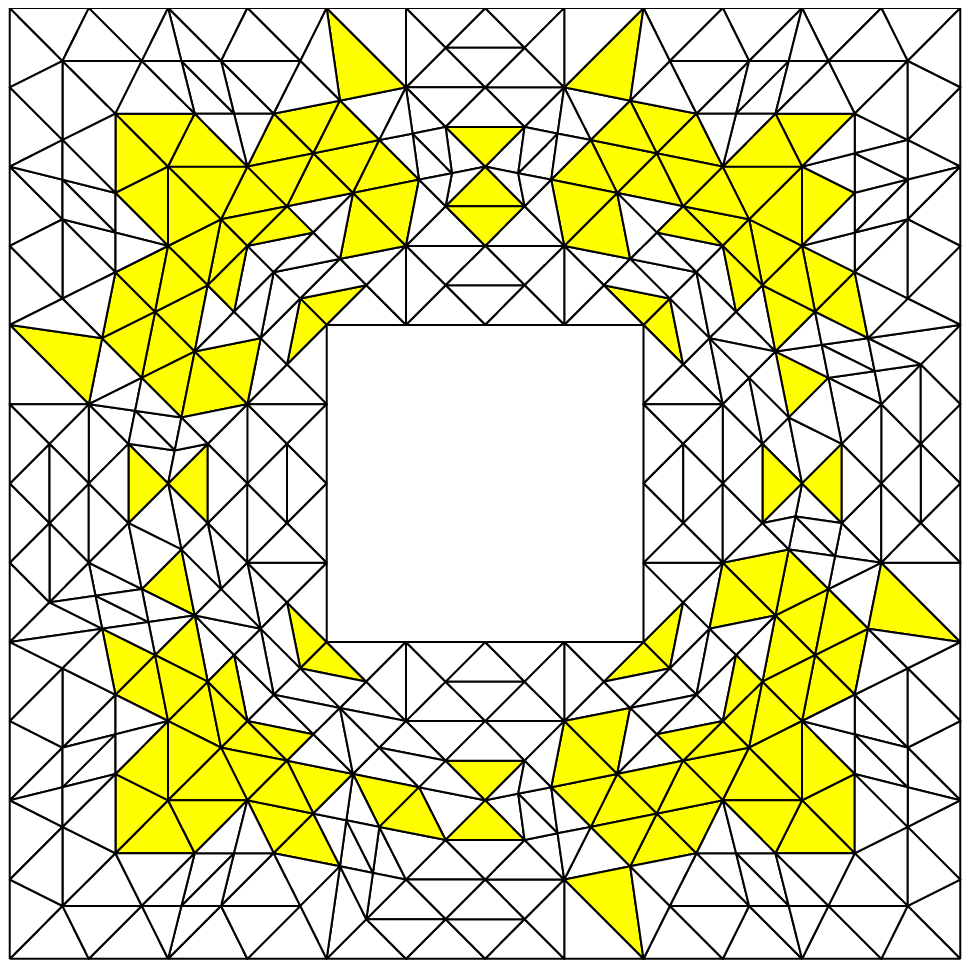}
\includegraphics[width=6cm]{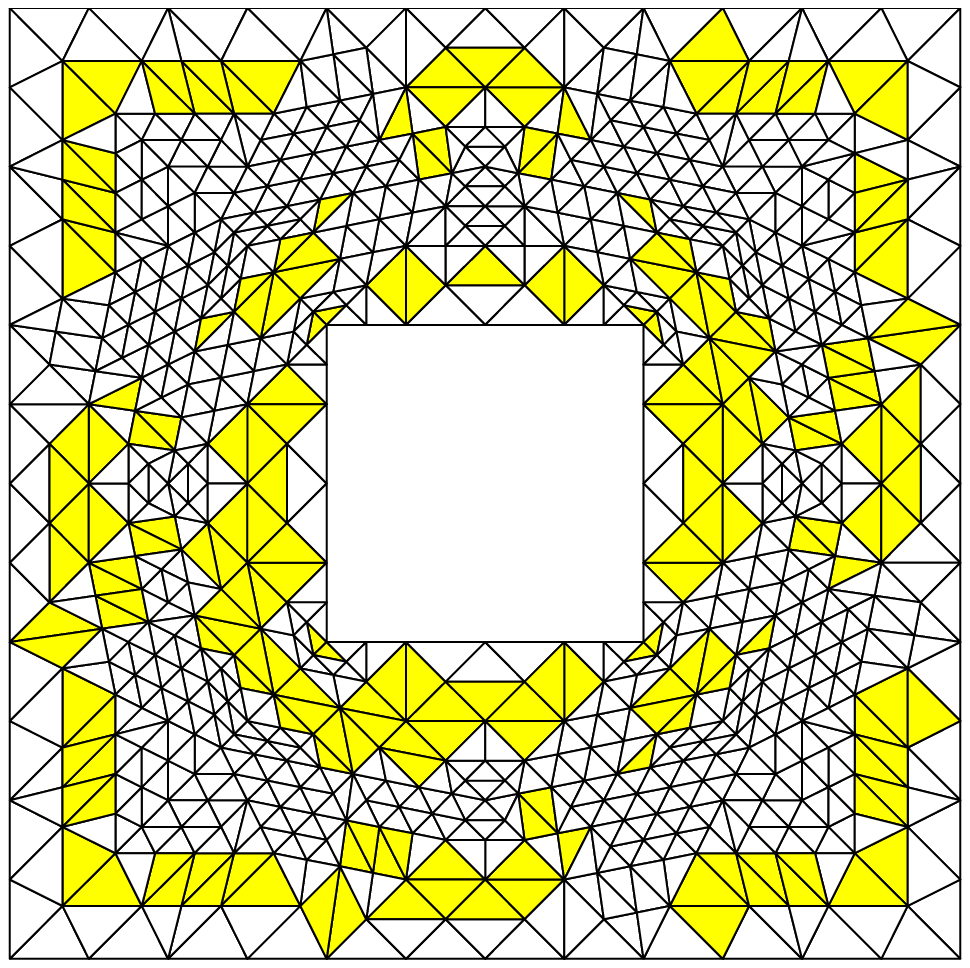}

\includegraphics[width=6cm]{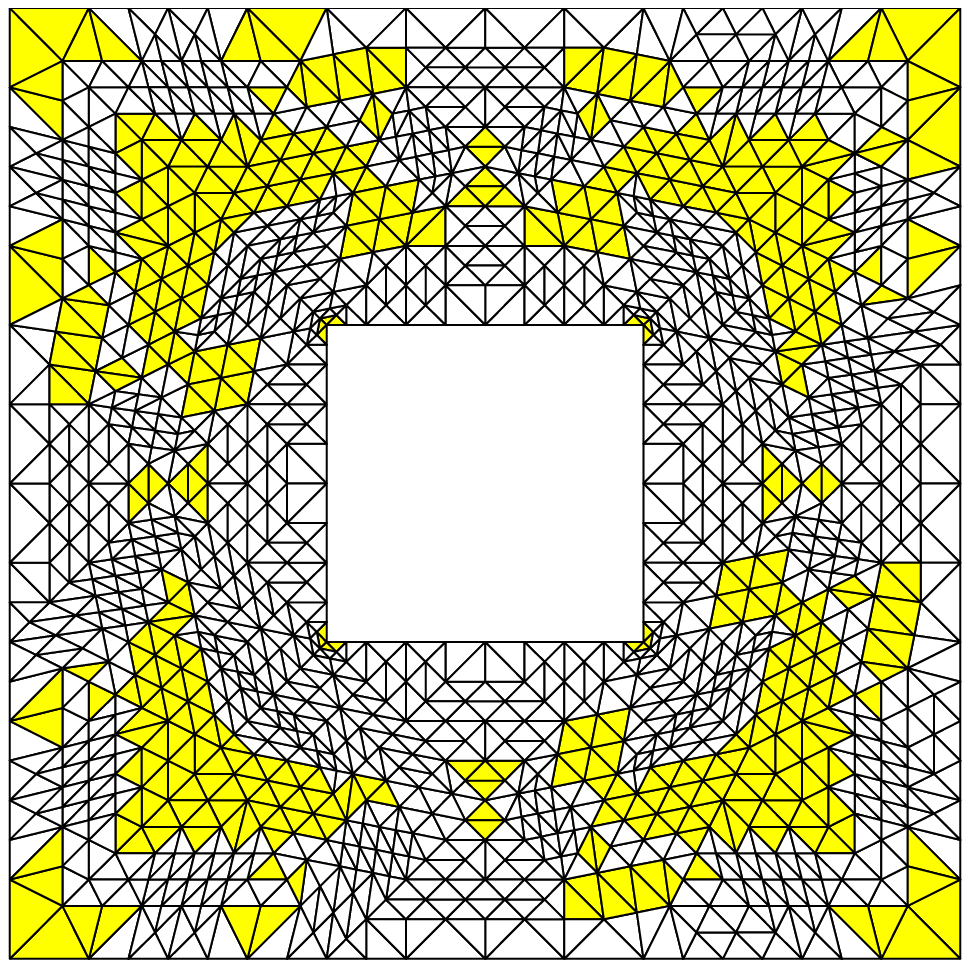}
\includegraphics[width=6cm]{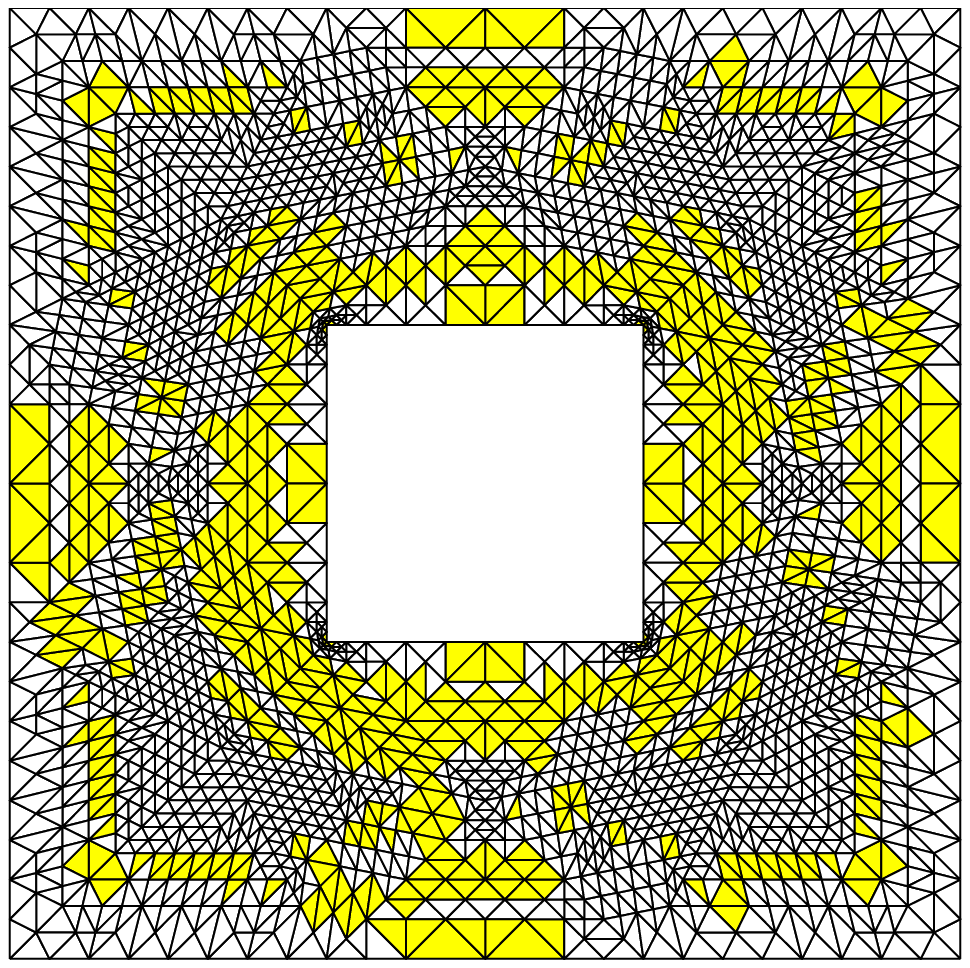}

\includegraphics[width=6cm]{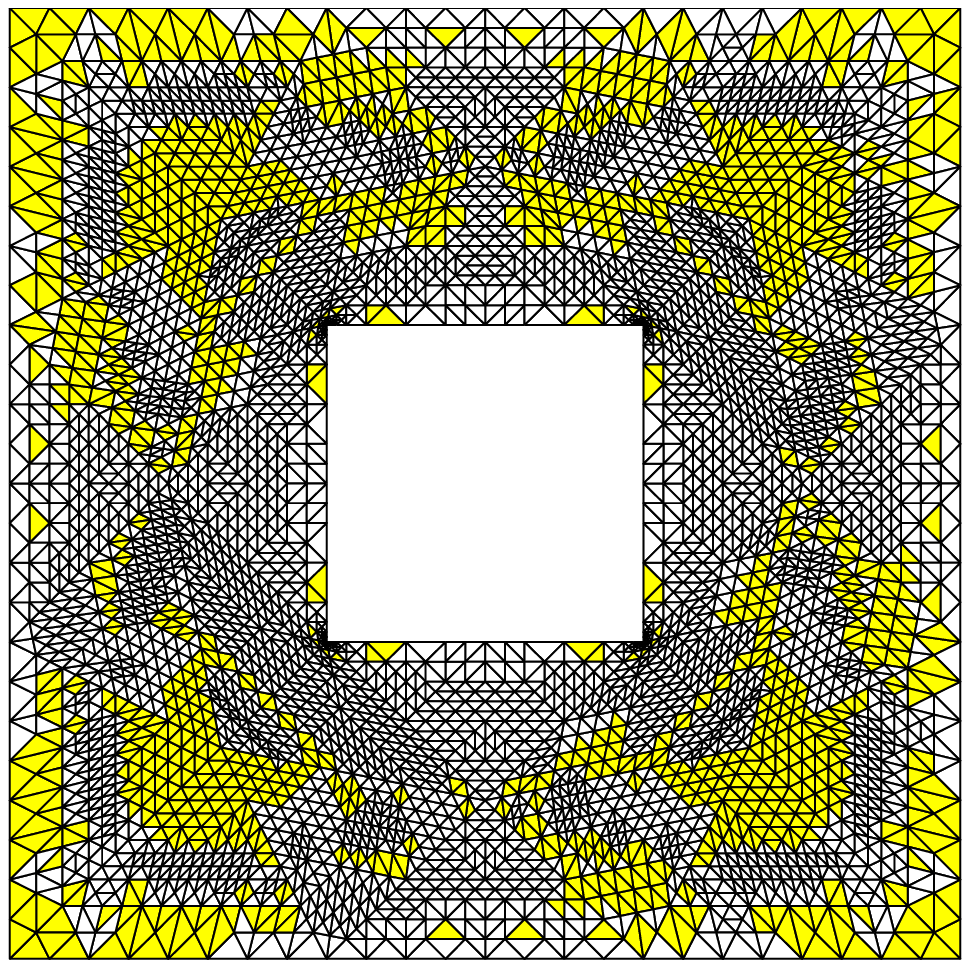}
\includegraphics[width=6cm]{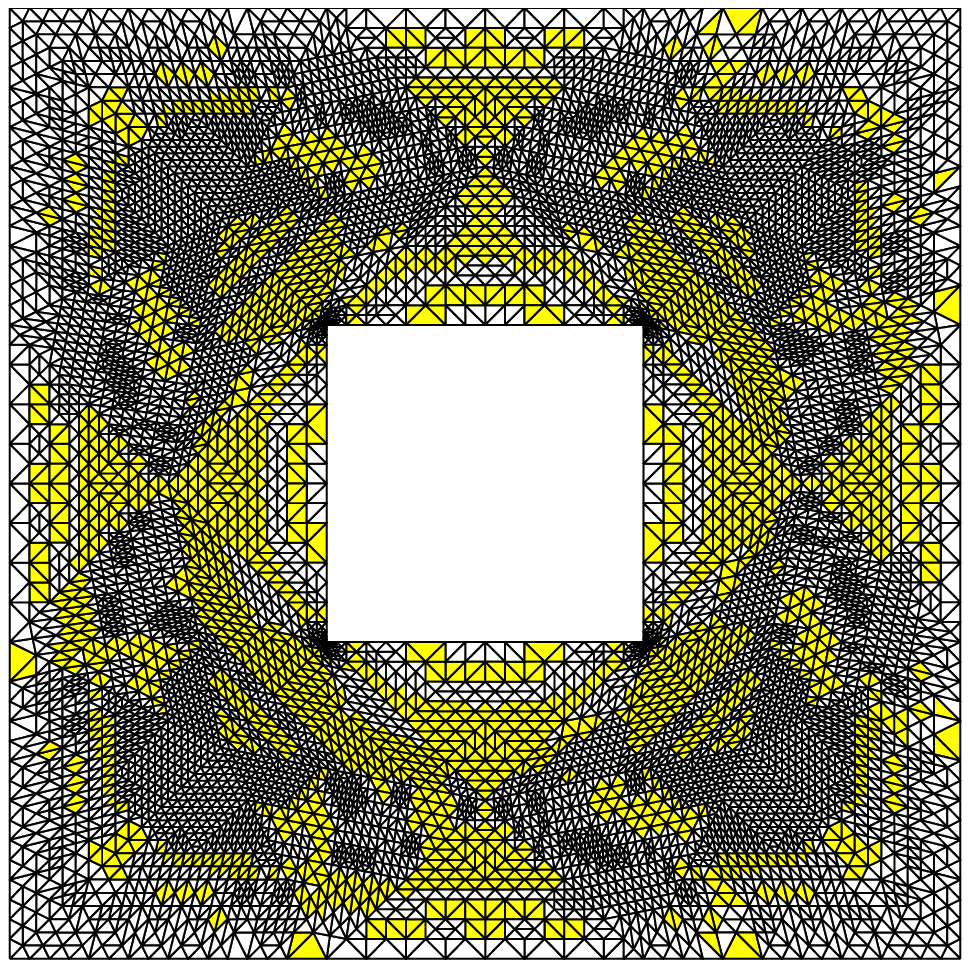}
\caption{Eight level of refinements of the square ring domain: indicator based
on $u_{2,h}$ and $u_{3,h}$}
\label{fg:ring-allmesh2}
\end{figure}

The convergence history of the adaptive algorithm is shown in
Figure~\ref{fg:ring_history2}. It can be seen that the procedure is performing
optimally with respect to the degrees of freedom.
\NEW{Finally, the effectivity index is reported in
Figure~\ref{fg:effectivity}; it can be seen that the ratio between the
error in the eigenvalues and the indicators is bounded above and below.}

\begin{figure}
\subfigure[$\lambda_{2,h}$]{
\includegraphics[width=6cm]{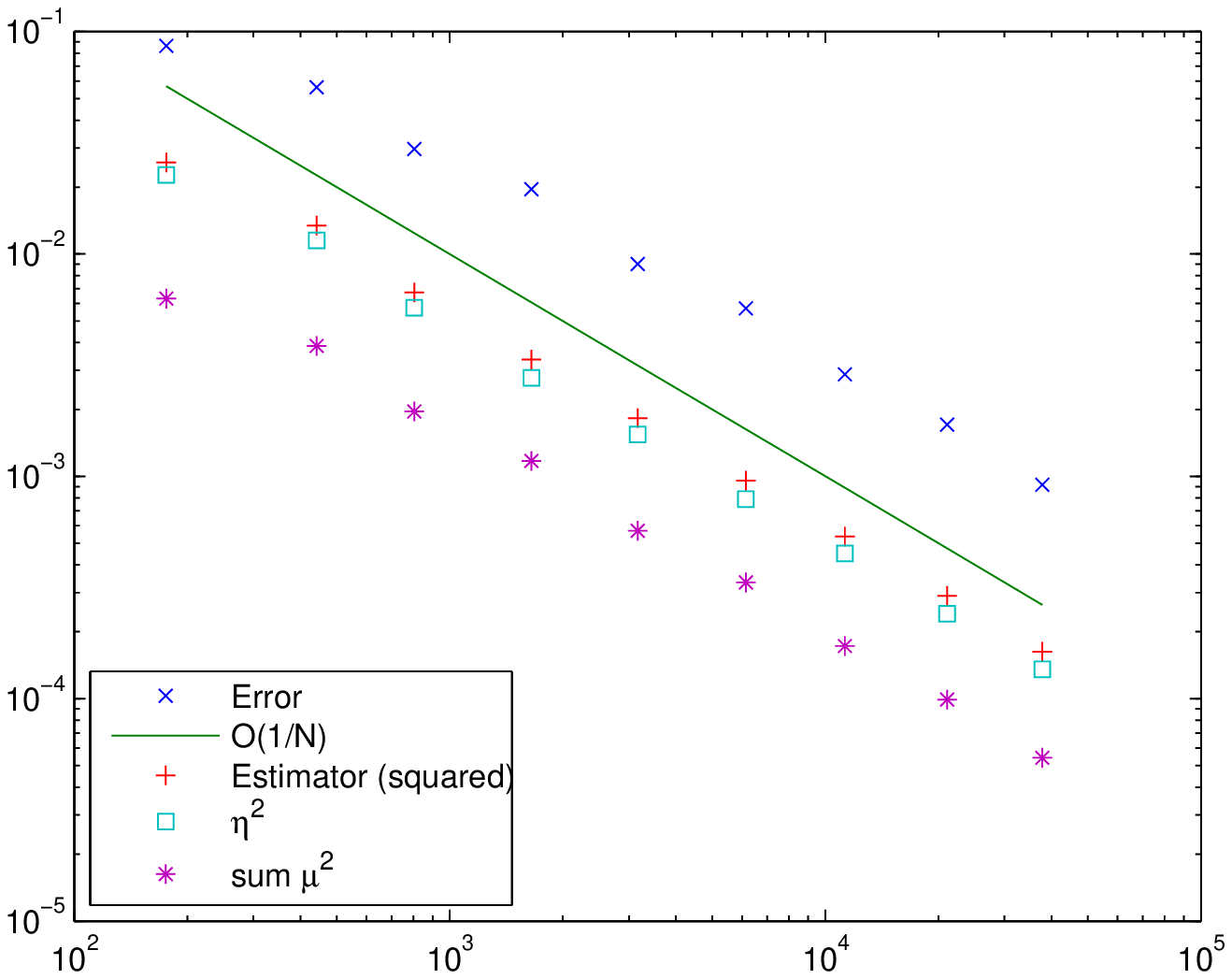}
}
\subfigure[$\lambda_{3,h}$]{
\includegraphics[width=6cm]{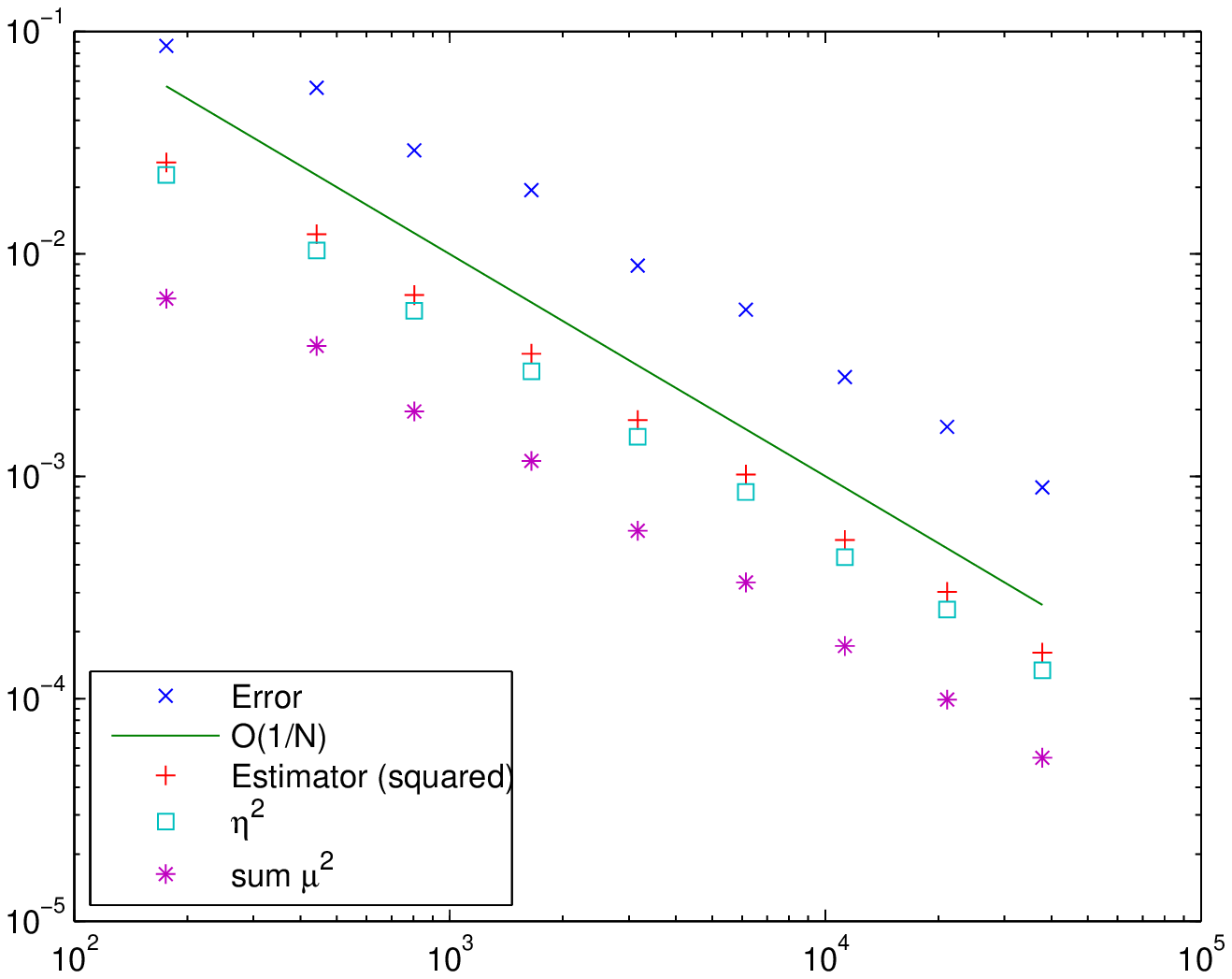}
}
\caption{Convergence history of the adaptive procedure: indicator based
on $u_{2,h}$ and $u_{3,h}$}
\label{fg:ring_history2}
\end{figure}

\begin{figure}
\includegraphics[width=6cm]{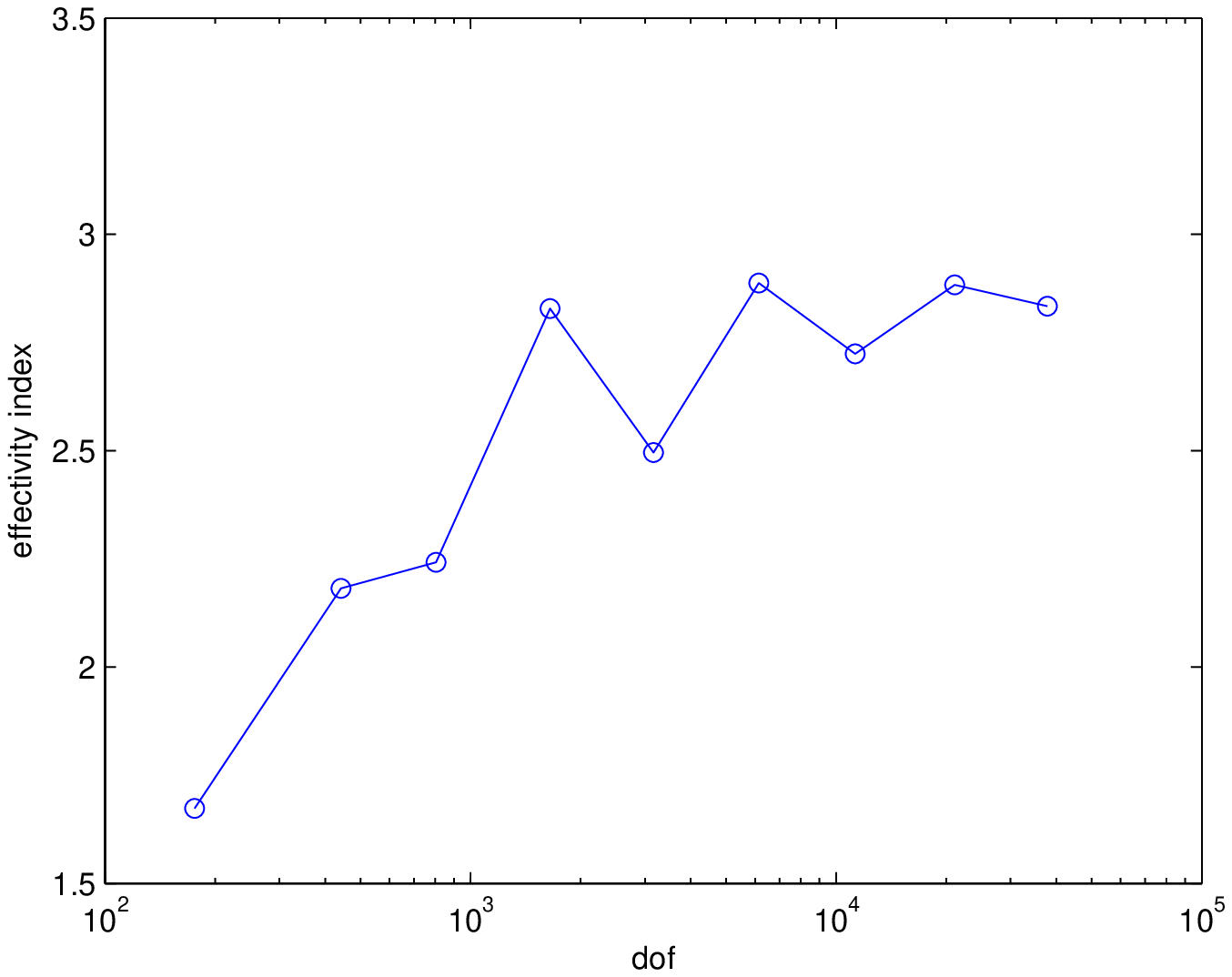}
\caption{Effectivity index: indicator based on $u_{2,h}$ and
$u_{3,h}$}
\label{fg:effectivity}
\end{figure}

\NEW{
\section*{Acknowledgements}
This work has started when R.G.~Dur\'an hosted D.~Boffi and L.~Gastaldi at the
Departamento de Matem\'atica of the University of Buenos Aires within the
Argentina-Italy bilateral projects {\it Innovative numerical
methods for industrial problems with complex and mobile
geometries} funded by CNR-CONICET (2011-2012) and by MinCyT-MAE
(2011-2013) IT/10/05 AR11M06.
}

\bibliographystyle{plain}
\bibliography{ref}

\end{document}